\def\H{{\cal H}}
\def\F{\mathscr{F} }
\def\R{\mathbb{R}}
\def\T{\mathbb{T}}
\def\ba{\begin{array}}
\def\ea{\end{array}}
\def\be{\begin{enumerate}}
\def\ee{\end{enumerate}}
\def\bi{\begin{itemize}}
\def\ei{\end{itemize}}
\def\bc{\begin{cases}}
\def\ec{\end{cases}}
\def\beq{\begin{equation}}
\def\eeq{\end{equation}}
\def\beqs{\begin{equation*}}
\def\eeqs{\end{equation*}}
\def\beqa{\begin{eqnarray}}
\def\eeqa{\end{eqnarray}}
\def\beqas{\begin{eqnarray*}}
\def\eeqas{\end{eqnarray*}}
\def\bmul{\begin{multline}}
\def\emul{\end{multline}}
\def\bmuls{\begin{multline*}}
\def\emuls{\end{multline*}}
\def\bg{\begin{gather}}
\def\eg{\end{gather}}
\def\bgs{\begin{gather*}}
\def\egs{\end{gather*}}
\newtheorem{thm}{Theorem}[section]
\newtheorem{cor}[thm]{Corollary}
\newtheorem{lem}[thm]{Lemma}
\newtheorem{prop}[thm]{Proposition}
\numberwithin{equation}{section}
\begin{document}

\baselineskip=24pt
%\thispagestyle{empty}
%
%\baselineskip=24pt
%
%\mbox{}
%\bigskip
%
%\centerline{\LARGE\bf Global well-posedness for the Benjamin }
%
%\bigskip
%
%\centerline{\LARGE\bf equation in low regularity}
%
%\bigskip
%
%\bigskip
%
%\bigskip
%
%
%\centerline{Yongsheng LI \quad and \quad  Yifei WU}
%
%\centerline{Department of Mathematics, South China University
%            of  Technology,}
%
%\centerline{
%            Guangzhou, Guangdong 510640, P. R. China}
%
%\bigskip
%\bigskip
%
%\leftskip 1 true cm \rightskip 1 true cm
%
% \noindent {\bf Abstract} \
%In this paper we consider the initial value problem of the Benjamin
%equation
%$$
%\partial_{t}u+\nu \H(\partial^2_xu)
%     +\mu\partial_{x}^{3}u+\partial_xu^2=0,
%$$
%where $u:\R\times [0,T]\mapsto \R$, and the constants $\nu,\mu\in \R,\mu\neq0$.
%We use the I-method to show that it is globally well-posed
%in Sobolev spaces $H^s(\R)$ for $s>-3/4$. Moreover,
%we use some new argument to obtain a good estimative for the lifetime of the
%local solution, and employ some multiplier decomposition argument to construct
%the almost conserved quantities.
%
%\bigskip
%
%\noindent {\bf Keywords:}\quad Benjamin equation, Bourgain space,
%global well-posedness, $I$-method
%
%\bigskip
%
%\noindent {\bf  MR(2000) Subject Classification:}\quad 35Q53
%
%\bigskip
%
%\noindent {\bf Short Title:} Global Well-posedness of Benjamin
%Equation
%
%
%\bigskip
%\noindent {\bf Corresponding author:} Yifei WU
%
%
%\bigskip
%\noindent {\bf Email Address:}  yerfmath@yahoo.cn
%
%\leftskip 0 true cm \rightskip 0 true cm

\newpage

\setcounter{page}{1}

\title{
\baselineskip=24pt
 \bf Global well-posedness for the Benjamin \\[.5ex]
 \bf equation in low regularity\footnote{This work is
 supported by National Natural Science Foundation of China
 under grant numbers 10471047 and 10771074.}}

\bigskip

\author{
\baselineskip=24pt
 \normalsize Yongsheng Li \quad and \quad
             Yifei Wu\footnote{Email: yshli@scut.edu.cn (Y. S. Li) and
             yerfmath@yahoo.cn (Y. F. Wu)}\\[1ex]
 \normalsize Department of Mathematics,
             South China University of Technology, \\[1ex]
 \normalsize Guangzhou, Guangdong 510640, P. R. China
 }

\bigskip

\date{}

\bigskip

\bigskip

\leftskip 1cm
\rightskip 1cm

\maketitle

\noindent
 {\small{\bf Abstract}\quad
 \baselineskip=24pt
In this paper we consider the initial value problem of the Benjamin
equation
$$
\partial_{t}u+\nu \H(\partial^2_xu)
     +\mu\partial_{x}^{3}u+\partial_xu^2=0,
$$
where $u:\R\times [0,T]\mapsto \R$, and the constants $\nu,\mu\in
\R,\mu\neq0$. We use the I-method to show that it is globally
well-posed in Sobolev spaces $H^s(\R)$ for $s>-3/4$. Moreover, we
use some argument to obtain a good estimative for the lifetime of
the local solution, and employ some multiplier decomposition
argument to construct the almost conserved quantities.}
\bigskip

\noindent
 {\small{\bf Keywords:}\quad
Benjamin equation, Bourgain space, global well-posedness, $I$-method}

\bigskip

\noindent
 {\small{\bf  MR(2000) Subject Classification:}\quad 35Q53}

\bigskip

\bigskip

\leftskip 0cm
\rightskip 0cm

\normalsize

\baselineskip=24pt

\section{Introduction}
We consider the initial value
problem (IVP) for the Benjamin equation
\renewcommand{\arraystretch}{2}
\begin{eqnarray}\label{kbo}
  && \partial_{t}u+\nu \H(\partial^2_xu)
     +\mu\partial_{x}^{3}u+\partial_xu^2=0,
     \qquad u:\R\times [0,T]\mapsto \R,\\
  && u(x,0)=u_0(x)\in H^s(\R),\label{1.2}
\end{eqnarray}
where the constants $\nu,\mu\in
\R,\mu\neq0$, $\H$ denotes the Hilbert transform defined by
$$
  \H f(x)=\mathrm{P.V.}\dfrac{1}{\pi}\int\dfrac{f(x-y)}{y}\,dy,
$$
i.e. $\widehat{\H f}(\xi)=-i\, \makebox{sgn}(\xi)\hat{f}(\xi)$.
The hat \^{} denotes the Fourier transform.

The equation (\ref{kbo}) was introduced by Benjamin \cite{B} to
describe a class of the intermediate waves in the stratified fluid.
The equation is also applied in other fluids.
Recently, Gleeson, Hammerton, Papageorgiou and Vanden-Broeck \cite{GHPV}
found a new application in interfacial electrohydrodynamics, they considered
the waves on a layer of finite depth with the influence of vertical electric fluid
and derived a Benjamin equation.
The linear part of  (\ref{kbo}) is formed by combining the linear
parts of the Korteweg-de Vries (KdV) and Benjamin-Ono equation together,
so (\ref{kbo}) is often called the Korteweg-de Vries--Benjamin-Ono equation.

The Benjamin equation was studied by several authors on the low regularity theories.
Linares \cite{Li} proved the global well-posedness of IVP of
(\ref{kbo})-(\ref{1.2}) in $L^2(\R)$; Kozono, Ogawa and Tanisaka
\cite{KOT} showed the local well-posedness in negative index space
$H^s(\R)$ with $s>-3/4$ by the argument in \cite{Bourgain} and \cite{KPV1}.
For the modified KdV--BO equation with a
trilinearity, Guo and Huo \cite{GH1} proved the local well-posedness
in $H^s(\R)$ for $s>1/4$ (who also proved the local well-posedness
for the IVP of Benjamin equation when $s\geq-1/8$). In \cite{GH2}, they studied
further the existence and regularity of the global attractor of the
damped, forced Benjamin equation in $L^2(\R)$.

We consider the global well-posedness for (\ref{kbo})-(\ref{1.2}) in
$H^s(\R)$ for $s<0$ in this work. The multilinear harmonic analysis
($I$-method) is introduced by Colliander, Keel, Staffilani, Takaoka and
Tao (see \cite{CKSTT4}, \cite{CKSTT} for examples)
to study the global well-posedness theory in low
regular space. It is mainly dependent on an almost conservation law and
the iteration which is based on the former and the local existence
intervals. If the solution of an equation lacks the
scale invariance, unlike the KdV equation ($\nu=0,\mu=1$ in
(\ref{kbo})), then the threshold of the global well-posedness in
$H^s(\R)$ is decided by two ingredients: the increment of the almost conserved
quantities and the lifetime in the local theory. One of the argument
here is to lengthen the
lifetime of the local existence by establishing a variant
local well-posedness result, which is based on a special bilinear
estimate (see Proposition 3.2 below).
We believe that these techniques are of independent
interest and may be useful for other equations which lacks the solution
of scale invariance.
Indeed, we have succeed in applying this argument to establish the global
well-posedness results of NLS-KdV system in $H^s(\R)\times H^s(\R) $ for $s>1/2$,
which improve the results in \cite{P}.
Moreover, in order to establish global well-posedness in $H^s(\R)$ for any
$s>\dfrac{3}{4}$,
it also requires the development of the techniques
in \cite{CKSTT}, because
of the complexity of the linear principle operator which makes some troubles to
give the pointwise estimates on the multipliers (for more detailed explanations, see
Section 4). In this purpose, we employ some multiplier decomposition
argument, which is featured by convenient operation.
More precisely, we split the multiplier
($M_4$, defined in Section 4) into two parts ($\bar{M}_4, \tilde{M}_4$),
then we remain $\bar{M}_4$, and deduce $\tilde{M}_4$
into a higher order cancelation by introducing the next generation modified
energy.

{\bf Some notations.} We use $A\lesssim B$ or $B\gtrsim A$ to denote
the statement that $A\leq CB$ for some large constant $C$ which may
vary from line to line, and may depend on the coefficients such as
$\mu,\nu$ and the index $s$. When it is necessary, we will write the
constants by $C_1,C_2,\cdots$ to see the dependency relationship. We
use $A\ll B$, or sometimes $A=o(B)$ to denote the statement $A\leq C^{-1}B$,
and use $A\sim
B$ to mean $A\lesssim B\lesssim A$. The notation $a+$ denotes
$a+\epsilon$ for any small $\epsilon$, and $a-$ for $a-\epsilon$.
$\langle\cdot\rangle=(1+|\cdot|^2)^{1/2}$ and
$D_x^\alpha=(-\partial^2_x)^{\alpha/2}$. We use $\|f\|_{L^p_xL^q_t}$
to denote the mixed norm
$\Big(\displaystyle\int\|f(x,\cdot)\|_{L^q}^p\
dx\Big)^{\frac{1}{p}}$. Moreover, we denote $\F_x$ to be
the Fourier transform corresponding to the variable $x$.
We define the Fourier restriction operators $P^l$, $P_l$ respectively as
$$
P^lf(x)=\displaystyle\int_{|\xi|\geq l}
e^{ix\xi}\hat{f}(\xi)\,d\xi,\quad
P_lf(x)=\displaystyle\int_{|\xi|\leq l}
e^{ix\xi}\hat{f}(\xi)\,d\xi
$$
for any $l>0$. Finally, we denote
the constant $a=2\max\Big(1,\Big|\dfrac{2\nu}{3\mu}\Big|\Big)$, it
will be often used in the analysis.

Now we introduce some definitions before presenting our main
result.

For $s,b\in \R $, define the Bourgain space $X_{s,b}$ to be the
closure of the Schwartz class under the norm
\begin{equation}
\|u\|_{X_{s,b}}\equiv \left(\int\!\!\!\!\int
\langle\xi\rangle^{2s}\langle\tau-\phi(\xi)\rangle^{2b}|\hat{u}(\xi,\tau)|^2\,d\xi
d\tau\right)^{\frac{1}{2}},\label{X}
\end{equation}
where
$\phi(\xi)=-\nu\xi|\xi|+\mu\xi^3$ is the phase function of the semigroup
generated by the linear Benjamin equation.

For an interval $\Omega$, we define $X_{s,b}^{\Omega}$ to be the
restriction of $X_{s,b}$ on $\R\times\Omega$ with the norm \beq
\|u\|_{X_{s,b}^\Omega}=\inf\{\|U\|_{X_{s,b}}:U|_{t\in\Omega}=u|_{t\in\Omega}\}.
\label{X1}
\eeq When $\Omega=[-\delta,\delta]$, we write $X_{s,b}^\Omega$ as
$X_{s,b}^\delta$.
By the limiting argument we see that, for every $u\in
X_{s,b}^{\delta}$, there exists an extension $\tilde{u}\in X_{s,b}$
such that $\tilde{u}=u$ on $\Omega$ and
$\|u\|_{X_{s,b}^{\delta}}=\|\tilde{u}\|_{X_{s,b}}$  (see
also \cite{G}).

Let $s<0$ and $N\gg 1$ be fixed,  the Fourier
multiplier operator $I_{N,s}$ is defined as
\beq
\widehat{I_{N,s}u}(\xi)=m_{N,s}(\xi)\hat{u}(\xi),\label{I}
\eeq
where the multiplier $m_{N,s}(\xi)$ is a smooth, monotone function
satisfing $0<m_{N,s}(\xi)\leq 1$ and
 \beq m_{N,s}(\xi)=\Biggl\{
\begin{array}{ll}
1,&|\xi|\leq N,\\
N^{-s}|\xi|^s,&|\xi|>2N.\label{m}
\end{array}
\eeq
Sometimes we denote $I_{N,s}$ and $m_{N,s}$ as $I$ and $m$
respectively for short if there is no confusion.

It is obvious that the operator $I_{N,s}$ maps $H^s(\R)$ into
$L^2(\R)$ with equivalent norms for any $s<0$. More precisely, there
exists some positive constant $C$ such that \beq
    C^{-1}\|u\|_{H^s}\leq \|I_{N,s}u\|_{L^2}
\leq
    CN^{-s}\|u\|_{H^s}.\label{II}
\eeq Moreover, $I_{N,s}$ can be extended to a map (still denoted by
$I_{N,s}$) from $X_{s,b}$ to $X_{0,b}$ which satisfies
$$
    C^{-1}\|u\|_{X_{s,b}}\leq\|I_{N,s}u\|_{X_{0,b}}
\leq CN^{-s}\|u\|_{X_{s,b}}
$$
for any $s<0,b \in \R$.

Now we are ready to state our main result.
\begin{thm}
The IVP (\ref{kbo})-(\ref{1.2}) is globally well-posed in $H^s(\R)$
for $s>-\dfrac{3}{4}$. More precisely, for any $u_0 \in H^s(\R)$
with $s>-\dfrac{3}{4}$ and $T>0$, (\ref{kbo})-(\ref{1.2}) has a
unique solution $u\in X_{s,\frac{1}{2}+}^T\subset C([0,T],H^s(\R))$,
and the solution map $u_0\mapsto u[u_0]$ is continuous from
$H^s(\R)$ to $X_{s,\frac{1}{2}+}^T$.
\end{thm}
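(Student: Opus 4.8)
The plan is to implement the $I$-method in the standard three-stage form, adapted to the Benjamin flow. First I would establish a \emph{variant local well-posedness} result for the $I$-equation: applying $I=I_{N,s}$ to (\ref{kbo}) gives
\[
\partial_t (Iu)+\nu\H(\partial_x^2 Iu)+\mu\partial_x^3(Iu)+\partial_x I(u^2)=0,
\]
and, via the bilinear estimate in Proposition 3.2 together with the usual linear and energy estimates in $X_{0,\frac12+}^\delta$, one gets that the $I$-equation is locally well-posed on $[0,\delta]$ with $\delta\sim \|Iu_0\|_{L^2}^{-\alpha}$ for a suitable $\alpha>0$, and with the bound $\|Iu\|_{X_{0,\frac12+}^\delta}\lesssim \|Iu_0\|_{L^2}$. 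The key point, emphasized in the introduction, is that one wants $\delta$ as large as possible as a function of $\|Iu_0\|_{L^2}$; squeezing out the optimal power of $N$ in the iteration hinges on this, so I would be careful to extract the sharpest lifetime the bilinear estimate allows (this is where the "special bilinear estimate" is used).

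Second, I would set up the modified energy $E_N(u)=\|Iu\|_{L^2}^2$ and compute its time derivative, obtaining a spacetime multiplier $M_3$ supported on $\xi_1+\xi_2+\xi_3=0$ measuring the failure of conservation. For $s>-3/4$ one expects the three-linear estimate
\[
\Big|\int_0^\delta\!\!\int M_3\,\widehat{Iu}\,\widehat{Iu}\,\widehat{Iu}\Big|\lesssim N^{-\beta}\,\|Iu\|_{X_{0,\frac12+}^\delta}^3
\]
for some $\beta>0$ (morally $\beta$ close to $3$, the analogue of the KdV gain), which is the \emph{almost conservation law}: $\sup_{[0,\delta]}\|Iu\|_{L^2}^2\le \|Iu_0\|_{L^2}^2+N^{-\beta}\|Iu_0\|_{L^2}^3$. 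To reach all $s>-3/4$ one likely needs the second-generation modified energy: decompose $M_4$ (arising when one tries to cancel the worst part of $M_3$ by adding a correction term) as $\bar M_4+\tilde M_4$, keep $\bar M_4$, and absorb $\tilde M_4$ into a higher-order correction $E_N^{(2)}$ with $|E_N^{(2)}(u)-\|Iu\|_{L^2}^2|\ll \|Iu\|_{L^2}^2$, so the two energies are comparable; this is the multiplier-decomposition step advertised in the introduction.

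Third comes the iteration. Rescale: replace $u_0$ by $u_{0,\lambda}(x)=\lambda^{-2}u_0(x/\lambda)$, which for the Benjamin equation is \emph{not} an exact symmetry, so one works instead with a suitably normalized initial datum and tracks how $\|Iu_{0,\lambda}\|_{L^2}\lesssim N^{-s}\lambda^{-3/2}\|u_0\|_{H^s}$ (or the analogous bound) depends on $\lambda$ and $N$; choose $\lambda$ so that $\|Iu_{0,\lambda}\|_{L^2}\sim 1$. Then run the local theory on steps of length $\delta\sim 1$, using the almost conservation law to control the growth of $\|Iu\|_{L^2}^2$ over each step by $N^{-\beta}$; after $\sim N^{\beta}$ steps the energy has at most doubled, so one reaches time $\sim N^{\beta}$ on the rescaled problem, hence time $T\sim \lambda^{-c}N^{\beta}$ on the original one. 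Feeding in the dependence of $\lambda$ on $N$ and optimizing, one checks that for every $s>-3/4$ and every target $T$ a large enough $N=N(T)$ makes $\lambda^{-c}N^{\beta}>T$; undoing the scaling and using (\ref{II}) gives $u\in C([0,T],H^s)$ with the stated quantitative bound, and uniqueness plus continuous dependence descend from the corresponding properties of the variant local theory.

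The main obstacle I anticipate is the trilinear/quadrilinear multiplier analysis: because the phase $\phi(\xi)=-\nu\xi|\xi|+\mu\xi^3$ is only piecewise smooth (the $|\xi|$ term) and is not homogeneous, the resonance function $\phi(\xi_1)+\phi(\xi_2)+\phi(\xi_3)$ does not factor as cleanly as in KdV, and the pointwise bounds on $m_{N,s}$ and its difference quotients that drive the cancelation are genuinely more delicate. Managing the region where one frequency is $O(N)$ and the low-frequency Hilbert-transform term competes with the dispersive $\xi^3$ term — i.e. near $|\xi|\sim a=2\max(1,|2\nu/3\mu|)$ — is where the decomposition $M_4=\bar M_4+\tilde M_4$ and the second modified energy earn their keep, and getting a clean power $N^{-\beta}$ with $\beta$ large enough to close the iteration down to $s>-3/4$ is the crux of the argument.
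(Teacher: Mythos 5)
Your stages one and two track the paper closely: the variant local theory driven by the special bilinear estimate (Proposition 3.2, giving $\delta\sim\|I_{N,s}u_0\|_{L^2}^{-2-}$ and $\|Iu\|_{X_{0,\frac12+}^\delta}\lesssim\|Iu_0\|_{L^2}$), and the modified-energy hierarchy with the decomposition $M_4=\bar M_4+\tilde M_4$, where only $\tilde M_4=\chi_\Omega M_4$ is pushed into the next-generation energy $E^4_I$. One bookkeeping remark: in the paper the cubic multiplier $M_3$ is cancelled \emph{exactly} by $\sigma_3=-M_3/\alpha_3$ (possible because $M_3$ vanishes unless some frequency exceeds $N$, where $|\alpha_3|\gtrsim|\xi_1\xi_2\xi_3|$), so the almost conservation law is not a trilinear estimate on $M_3$ but the pair of bounds $\bigl|\int_0^\delta\Lambda_4(\bar M_4)\bigr|\lesssim N^{-3+}\|Iu\|_{X_{0,\frac12+}^\delta}^4$ and $\bigl|\int_0^\delta\Lambda_5(M_5)\bigr|\lesssim N^{-15/4+}\|Iu\|_{X_{0,\frac12+}^\delta}^5$.

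The genuine gap is in your third stage. You propose to rescale $u_0\mapsto\lambda^{-2}u_0(\cdot/\lambda)$ and normalize $\|Iu_{0,\lambda}\|_{L^2}\sim1$, running the local theory on unit time steps. But the Benjamin equation is not scale invariant: under that scaling the dispersive part transforms into $\lambda\nu\,\H\partial_x^2+\mu\partial_x^3$, so the coefficient $\nu$ becomes $\lambda\nu$ and the threshold $a=2\max\bigl(1,\bigl|\tfrac{2\nu}{3\mu}\bigr|\bigr)$ grows like $\lambda$. Every estimate in the paper (the Strichartz bounds through $P^a$, the resonance lower bound $|\alpha_3|\gtrsim|\xi_1||\xi_2||\xi_3|$ valid only for $\max|\xi_i|\geq a$, hence the bilinear estimate and all the multiplier bounds) carries implicit constants and frequency thresholds depending on $a$; with $\lambda$ chosen as a power of $N$ these are no longer uniform, and the degenerate low-frequency region $|\xi|\lesssim a$ swallows the dyadic ranges on which the cancellation is exploited. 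Your plan does not track this $\lambda$-dependence, so the step count does not close as written. The paper's way around this is precisely to \emph{not} rescale: it lengthens the local lifetime to $\delta\sim\|Iu_0\|_{L^2}^{-2-}\gtrsim N^{2s-}$ in the original variables, iterates $k=T/\delta$ times, and the condition $k\cdot\bigl(N^{-3+}N^{-4s}+N^{-15/4+}N^{-5s}\bigr)\leq\tfrac14 C_0N^{-2s}$ reduces (after choosing $N^{0+}\gtrsim T$) to $-4s-3<0$ and $-5s-\tfrac{15}{4}<0$, i.e. exactly $s>-\tfrac34$. You should replace the rescaling step by this direct iteration, which is the purpose of the sharpened lifetime estimate you correctly identified in stage one.
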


The rest of this article is organized as follows. In Section 2, we
present some preliminary estimates. In Section 3, we will
give a key bilinear estimate and establish the variant local
well-posedness result. In Section 4, we use the I-method to prove Theorem 1.1.

\vspace{0.5cm}
\section{Some Preliminary Estimates}

As it's well-known, the corresponding linear equation of (\ref{kbo})
\beq
 \partial_{t}u+\nu \H(\partial^2_xu)+\mu\partial_{x}^{3}u=0, \qquad x,\,t\in \R
 \label{lkbo}
\eeq generates a unitary group $\{S(t)\}_{t\in \R}$ on $L^2(\R)$
such that $u=S(t)u_0$ solves (\ref{lkbo})-(\ref{1.2}). It is also
defined explicitly by spatial Fourier transform as
$$
\widehat{S(t)u_0}(\xi)\triangleq e^{it\phi(\xi)}\widehat{u_0}(\xi).
$$
The first part of estimates in this section are some standard Strichartz
estimates concerning this group. We remark that some Fourier
restriction operators shall be used in these estimates because of
the presence of the nontrivial zero points of the phase function
$\phi(\xi)$, which is different from the KdV equation. See
\cite{GH1} for details.
\begin{lem}
 For $u_0\in L^2(\R)$,
\begin{eqnarray}
              \|D_xS(t)P^au_0\|_{L^\infty_x L^2_t}
  & \lesssim &
              \|u_0\|_{L^2},\label{TSE1} \\
              \left\|D_x^{-\frac{1}{4}}S(t)P^au_0\right\|_{L^4_x L^\infty_t}
  & \lesssim &
              \|u_0\|_{L^2}, \label{TSE2}\\
              \|D_x^{\alpha}S(t)P^au_0\|_{L^p_x L^q_t}
  & \lesssim &
              \|u_0\|_{L^2},\label{TSE3}\\
              \|S(t)u_0\|_{L^8_x L^8_t}
  & \lesssim &
              \|u_0\|_{L^2},\label{TSE4}
\end{eqnarray}
where
$\dfrac{1}{p}=\dfrac{1}{5}(1-\alpha)$,
$\dfrac{1}{q}=\dfrac{1}{10}(4\alpha+1)$, for any
$\alpha\in [-\dfrac{1}{4},1]$.
\end{lem}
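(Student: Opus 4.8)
The plan is to reduce all four inequalities to the classical oscillatory–integral estimates of Kenig–Ponce–Vega, the only genuinely new point being to record that the Benjamin phase $\phi(\xi)=-\nu\xi|\xi|+\mu\xi^3$ is nondegenerate on the support of $P^a$. A direct computation gives $\phi'(\xi)=-2\nu|\xi|+3\mu\xi^2$, $\phi''(\xi)=-2\nu\,\mathrm{sgn}(\xi)+6\mu\xi$ and $\phi'''(\xi)=6\mu$ for $\xi\neq0$. The zeros of $\phi'$ and $\phi''$ sit at $|\xi|=\frac{2|\nu|}{3|\mu|}$ and $|\xi|=\frac{|\nu|}{3|\mu|}$ respectively, and the constant $a=2\max(1,|2\nu/(3\mu)|)$ is chosen precisely so that both lie well inside $\{|\xi|<a\}$. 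Hence on $\{|\xi|\geq a\}$ one has the clean comparisons $|\phi'(\xi)|\sim|\xi|^2$ and $|\phi''(\xi)|\sim|\xi|$, while $|\phi'''(\xi)|\sim1$ holds on the whole line; these three facts drive the three distinct types of estimate.

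First I would prove the smoothing estimate (\ref{TSE1}), which is the cleanest. For fixed $x$, writing $D_xS(t)P^au_0(x)=c\int_{|\xi|\geq a}|\xi|e^{ix\xi+it\phi(\xi)}\widehat{u_0}(\xi)\,d\xi$ and splitting into the two half-lines on which $\phi$ is strictly monotone, I substitute $\tau=\phi(\xi)$ and apply Plancherel in $t$; this yields $\|D_xS(t)P^au_0(x)\|_{L^2_t}^2=c\int_{|\xi|\geq a}\frac{|\xi|^2}{|\phi'(\xi)|}|\widehat{u_0}(\xi)|^2\,d\xi$, and since $|\xi|^2/|\phi'(\xi)|\sim1$ the right-hand side is $\lesssim\|u_0\|_{L^2}^2$ uniformly in $x$. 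Taking the supremum over $x$ gives (\ref{TSE1}), which is exactly the endpoint $\alpha=1$ (that is $(p,q)=(\infty,2)$) of (\ref{TSE3}).

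Next comes the maximal-function estimate (\ref{TSE2}), which I expect to be the main obstacle, since the clean Plancherel identity is no longer available and one must control the full space–time supremum. I would dualize via a $TT^*$ argument, reducing (\ref{TSE2}) to an $L^{4/3}_xL^1_t\to L^4_xL^\infty_t$ bound for the operator with kernel $K(x-x',t-t')=\int_{|\xi|\geq a}|\xi|^{-1/2}e^{i(x-x')\xi+i(t-t')\phi(\xi)}\,d\xi$, and then estimate $K$ by van der Corput and stationary phase, using the second-derivative bound $|\phi''(\xi)|\sim|\xi|$ on $\{|\xi|\geq a\}$. This is precisely the Kenig–Ponce–Vega maximal-function estimate for a KdV-type phase, so once the nondegeneracy on $\{|\xi|\geq a\}$ is in hand the argument transfers verbatim; (\ref{TSE2}) is the endpoint $\alpha=-1/4$ (that is $(p,q)=(4,\infty)$) of (\ref{TSE3}). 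The remaining estimates (\ref{TSE3}) then follow by complex (Stein) interpolation of the analytic family $D_x^\alpha S(t)P^a$ between these two endpoints in the mixed-norm spaces $L^p_xL^q_t$; one only has to check that the stated relations $\frac1p=\frac15(1-\alpha)$ and $\frac1q=\frac1{10}(4\alpha+1)$ are linear in $\alpha$ and interpolate $(\frac1p,\frac1q)=(0,\frac12)$ at $\alpha=1$ with $(\frac14,0)$ at $\alpha=-\frac14$, which they do.

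Finally, for (\ref{TSE4}) I would argue globally, since no frequency cutoff appears and indeed none is needed: van der Corput applied with the uniform lower bound $|\phi'''(\xi)|=6|\mu|$ gives the dispersive decay $\|S(t)u_0\|_{L^\infty_x}\lesssim|t|^{-1/3}\|u_0\|_{L^1}$, and feeding this together with $\|S(t)u_0\|_{L^2}=\|u_0\|_{L^2}$ into the standard Strichartz machinery produces $\|S(t)u_0\|_{L^q_tL^r_x}\lesssim\|u_0\|_{L^2}$ for every pair with $\frac3q=\frac12-\frac1r$. The diagonal choice $q=r$ forces $q=r=8$, which is exactly (\ref{TSE4}). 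The only care needed here is that the van der Corput bound is uniform over the whole frequency line because $\phi'''$ never vanishes, which explains why (\ref{TSE4}), unlike (\ref{TSE1})–(\ref{TSE3}), requires no restriction to high frequencies.
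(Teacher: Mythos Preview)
Your proposal is correct and follows exactly the approach the paper indicates: the paper's proof simply refers to \cite{GH1} and \cite{KPV} for (\ref{TSE1}), (\ref{TSE2}), (\ref{TSE4}) and states that (\ref{TSE3}) follows by interpolation between (\ref{TSE1}) and (\ref{TSE2}), and what you have written is precisely the Kenig--Ponce--Vega argument underlying those references together with that interpolation step. Your observation that the cutoff $P^a$ serves to guarantee $|\phi'(\xi)|\sim|\xi|^2$ and $|\phi''(\xi)|\sim|\xi|$, while the unrestricted estimate (\ref{TSE4}) relies only on $|\phi'''|\equiv 6|\mu|$, is exactly the point.
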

\begin{proof}
See \cite{GH1} for the proof of (\ref{TSE1}), (\ref{TSE2}) and
(\ref{TSE4}) (see also \cite{KPV}). (\ref{TSE3}) follows by interpolation between
(\ref{TSE1}) and (\ref{TSE2}).\hfill$\Box$
\end{proof}

\begin{lem}
 Let $\alpha,\,p,\,q\,$
 be as in Lemma 2.1. For $F\in X_{0,\frac{1}{2}+}$,
\beq \|D_x^{\alpha}P^aF\|_{L^p_x L^q_t}\lesssim
\|F\|_{X_{0,\frac{1}{2}+}}. \label{XE1} \eeq
\end{lem}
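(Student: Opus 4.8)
The plan is to prove \eqref{XE1} by the standard transference principle, which upgrades the fixed-time-evolution Strichartz estimate \eqref{TSE3} of Lemma 2.1 into a bound on the Bourgain space $X_{0,\frac12+}$. Concretely, I would (i) write $F$ as a superposition over the modulation variable of free evolutions $S(t)g_\sigma$, (ii) pull the operator $D_x^{\alpha}P^a$ inside, take the $L^p_xL^q_t$ norm, and apply Minkowski's integral inequality together with \eqref{TSE3} termwise, and (iii) close with Cauchy--Schwarz in the modulation variable, where the hypothesis $b=\frac12+>\frac12$ is used.

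For step (i)-(ii): let $\widehat{F}$ denote the space-time Fourier transform of $F$ and substitute $\tau=\phi(\xi)+\sigma$ in the Fourier inversion formula to get
$$
F(x,t)=\int_{\R}e^{it\sigma}\big(S(t)g_\sigma\big)(x)\,d\sigma,\qquad \widehat{g_\sigma}(\xi):=\widehat{F}\big(\xi,\phi(\xi)+\sigma\big).
$$
Since $D_x^{\alpha}$ and $P^a$ are Fourier multipliers in $x$, they commute with $S(t)$ and with multiplication by $e^{it\sigma}$; hence, taking the $L^p_xL^q_t$ norm and applying Minkowski's integral inequality (legitimate because $p=5/(1-\alpha)\ge 4$ and $q=10/(4\alpha+1)\ge 2$ throughout $\alpha\in[-\frac14,1]$, the endpoint $p=\infty$ at $\alpha=1$ being handled identically),
$$
\|D_x^{\alpha}P^aF\|_{L^p_xL^q_t}\le\int_{\R}\big\|D_x^{\alpha}P^aS(t)g_\sigma\big\|_{L^p_xL^q_t}\,d\sigma\lesssim\int_{\R}\|g_\sigma\|_{L^2}\,d\sigma,
$$
where the last inequality is exactly \eqref{TSE3} applied with datum $g_\sigma$ (using $P^aP^a=P^a$).

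For step (iii): by Plancherel $\|g_\sigma\|_{L^2}=\|\widehat{F}(\cdot,\phi(\cdot)+\sigma)\|_{L^2_\xi}$, and writing this as $\langle\sigma\rangle^{-b}\cdot\langle\sigma\rangle^{b}\|\widehat{F}(\cdot,\phi(\cdot)+\sigma)\|_{L^2_\xi}$ with $b=\frac12+$, Cauchy--Schwarz in $\sigma$ gives
$$
\int_{\R}\|g_\sigma\|_{L^2}\,d\sigma\le\Big(\int_{\R}\langle\sigma\rangle^{-2b}\,d\sigma\Big)^{1/2}\Big(\int_{\R}\langle\sigma\rangle^{2b}\|\widehat{F}(\cdot,\phi(\cdot)+\sigma)\|_{L^2_\xi}^2\,d\sigma\Big)^{1/2}=C_b\,\|F\|_{X_{0,b}},
$$
the first factor being finite precisely because $b>\frac12$ and the second equalling $\|F\|_{X_{0,b}}$ after changing variables back via $\tau=\phi(\xi)+\sigma$. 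Combining the displays yields \eqref{XE1}. I do not expect a genuine obstacle here: all the analytic content is packaged in the linear estimate \eqref{TSE3}, and the only points meriting a word of care are the applicability of Minkowski's inequality over the full range of $\alpha$ (including the $L^\infty_x$ endpoint) and the $\sigma$-integrability of $\langle\sigma\rangle^{-2b}$, which is where $b>\frac12$ is essential.
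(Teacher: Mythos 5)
Your argument is correct and is precisely the standard transference principle that the paper invokes when it omits the proof with a citation to Kenig--Ponce--Vega: decompose $F$ into modulated free evolutions, apply Minkowski and the group estimate (\ref{TSE3}), and close with Cauchy--Schwarz in the modulation variable using $b>\tfrac12$. There is no difference in approach, only in the level of detail.
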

\begin{proof}We shall omit the details here, since the argument is well-known
(see \cite{KPV4}).
  \hfill$\Box$
\end{proof}

By interpolating between (\ref{XE1}) and the following equality
\begin{equation}
 \|F\|_{L^2_ {xt}}=
 \|F\|_{X_{0,0}},\label{L2}
\end{equation}
we can generalize (\ref{XE1}) as below.

\begin{lem}For any
$\theta\in [0,1]$, $\alpha\in [-\dfrac{\theta}{4},\theta]$ and
$F\in X_{0,\frac{\theta}{2}+}$,  we have
\beq
 \|D_x^{\alpha}P^aF\|_{L^p_x
L^q_t}\lesssim \|F\|_{X_{0,\frac{\theta}{2}+}},\label{XE2}
\eeq
where
$\dfrac{1}{p}=\dfrac{1}{2}-\dfrac{1}{5}\alpha-\dfrac{3}{10}\theta$,
$\dfrac{1}{q}=\dfrac{1}{2}+\dfrac{2}{5}\alpha-\dfrac{2}{5}\theta$.
\end{lem}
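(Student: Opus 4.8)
The statement to prove is Lemma 2.3, which generalizes the Strichartz-type estimate \eqref{XE1} by interpolation.

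\medskip

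The plan is to obtain \eqref{XE2} by complex (or bilinear) interpolation between the two endpoint estimates already available in the excerpt: the $X_{0,\frac12+}$ estimate \eqref{XE1} and the trivial $L^2_{xt}$ identity \eqref{L2}. First I would set up the family of operators indexed by the regularity parameter. Fix $a>0$ (the constant from the $P^a$ restriction) and, for a given $\alpha_0\in[-\tfrac14,1]$, consider the map $T_{\alpha}: F\mapsto D_x^{\alpha}P^aF$. From Lemma 2.2 we know $T_{\alpha_1}$ is bounded from $X_{0,\frac12+}$ to $L^{p_1}_xL^{q_1}_t$ with $\tfrac1{p_1}=\tfrac15(1-\alpha_1)$, $\tfrac1{q_1}=\tfrac1{10}(4\alpha_1+1)$ for every admissible $\alpha_1$, and from \eqref{L2} that $T_0$ is bounded (indeed an isometry after dropping $P^a$, which only decreases the norm) from $X_{0,0}$ to $L^2_xL^2_t$.

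\medskip

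Next I would identify the Bourgain spaces $X_{0,b}$ as a complex interpolation scale in the parameter $b$: for $\theta\in[0,1]$ one has $[X_{0,0},\,X_{0,\frac12+}]_{\theta}=X_{0,\frac{\theta}{2}+}$, which follows because, under the Fourier transform, $X_{0,b}$ is just the weighted space $L^2\big(\langle\tau-\phi(\xi)\rangle^{2b}\,d\xi\,d\tau\big)$ and weighted $L^2$ spaces interpolate in the exponent of the weight. On the target side, the spaces $L^p_xL^q_t$ interpolate by the standard mixed-norm interpolation theorem: $[L^{p_0}_xL^{q_0}_t,\,L^{p_1}_xL^{q_1}_t]_{\theta}=L^{p}_xL^{q}_t$ with $\tfrac1p=\tfrac{1-\theta}{p_0}+\tfrac{\theta}{p_1}$ and $\tfrac1q=\tfrac{1-\theta}{q_0}+\tfrac{\theta}{q_1}$. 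To capture the full stated range of $\alpha$ I would let the regularity index in the endpoint \eqref{XE1} itself vary: apply it with exponent $\alpha_1=\alpha/\theta$, which is admissible precisely when $\alpha\in[-\tfrac{\theta}{4},\theta]$, matching the hypothesis of the lemma. Interpolating $T_0$ (with $\alpha_0=0$, on $X_{0,0}\to L^2_{xt}$) and $T_{\alpha_1}$ (on $X_{0,\frac12+}\to L^{p_1}_xL^{q_1}_t$) with parameter $\theta$ then yields boundedness of $T_{\alpha}$, with $\alpha=\theta\alpha_1$, from $X_{0,\frac{\theta}{2}+}$ into $L^{p}_xL^{q}_t$, where $\tfrac1p=\tfrac{1-\theta}{2}+\theta\cdot\tfrac15(1-\alpha_1)=\tfrac12-\tfrac15\alpha-\tfrac3{10}\theta$ and $\tfrac1q=\tfrac{1-\theta}{2}+\theta\cdot\tfrac1{10}(4\alpha_1+1)=\tfrac12+\tfrac25\alpha-\tfrac25\theta$, which are exactly the exponents claimed in \eqref{XE2}.

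\medskip

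The only genuinely delicate point is making the interpolation rigorous despite the ``$+$'' in $X_{0,\frac12+}$: the estimate \eqref{XE1} holds for $X_{0,\frac12+\epsilon}$ with a constant depending on $\epsilon$, so I would fix $\epsilon>0$, interpolate to get the bound into $L^p_xL^q_t$ from $X_{0,\theta(\frac12+\epsilon)}=X_{0,\frac{\theta}{2}+\theta\epsilon}$, and then absorb $\theta\epsilon$ into the stated ``$+$'' since $\theta\le 1$. The remaining bookkeeping — checking the admissibility interval for $\alpha_1$, verifying the two arithmetic identities for $\tfrac1p,\tfrac1q$, and noting that $P^a$ is bounded on every space in sight so its presence is harmless — is routine. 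I would therefore present the argument as: (i) recall the two endpoints; (ii) invoke Stein–Weiss/complex interpolation for the weighted-$L^2$ scale $X_{0,b}$ and mixed-norm interpolation for $L^p_xL^q_t$; (iii) solve the linear system for the exponents; (iv) dispose of the $\epsilon$.
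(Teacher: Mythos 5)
Your proposal is correct and follows exactly the route the paper intends: the paper's entire ``proof'' of Lemma 2.3 is the one-line remark that it follows by interpolating \eqref{XE1} against the identity \eqref{L2}, and you have simply filled in the bookkeeping (the choice $\alpha_1=\alpha/\theta$, the interpolation identities for the $X_{0,b}$ and $L^p_xL^q_t$ scales, and the arithmetic for $1/p$, $1/q$), all of which checks out. No substantive difference from the paper's argument.
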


Similarly, combining (\ref{TSE4}) with (\ref{L2}), we have
\begin{lem}For
$\rho\geq \dfrac{2(q-2)}{3q}$, $q\in [2,8]$ and $F\in X_{0,\rho+}$,
we have \beq
 \|F\|_{L^q_{xt}}\lesssim \|F\|_{X_{0,\rho+}}.\label{XE3}
\eeq
\end{lem}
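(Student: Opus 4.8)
The statement is an interpolation-type Strichartz estimate: for $\rho \geq \tfrac{2(q-2)}{3q}$, $q \in [2,8]$, one has $\|F\|_{L^q_{xt}} \lesssim \|F\|_{X_{0,\rho+}}$. The plan is to obtain this by real (or complex) interpolation between the two endpoint cases $q = 8$ and $q = 2$, exactly as the sentence preceding the lemma suggests. At $q = 8$ the inequality $\tfrac{2(q-2)}{3q} = \tfrac{2 \cdot 6}{24} = \tfrac{1}{2}$, so the claim reads $\|F\|_{L^8_{xt}} \lesssim \|F\|_{X_{0,\frac12+}}$; at $q = 2$ the exponent $\tfrac{2(q-2)}{3q} = 0$, so the claim reads $\|F\|_{L^2_{xt}} \lesssim \|F\|_{X_{0,0+}}$, which follows trivially from the identity (\ref{L2}) $\|F\|_{L^2_{xt}} = \|F\|_{X_{0,0}}$ together with the fact that $X_{0,0+} \hookrightarrow X_{0,0}$.

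First I would establish the $q = 8$ endpoint. The natural route is to transfer the free-group estimate (\ref{TSE4}), $\|S(t)u_0\|_{L^8_x L^8_t} \lesssim \|u_0\|_{L^2}$, to a Bourgain-space bound via the standard transference principle: write $F(x,t)$ through its space-time Fourier representation, slice in the modulation variable $\tau - \phi(\xi)$, and use Minkowski's inequality together with the observation that $e^{it\tau_0}S(t)$ applied to a fixed profile has the same $L^8_{xt}$ norm as $S(t)$ applied to that profile. Integrating the resulting one-parameter family against $\langle \tau - \phi(\xi)\rangle^{-(1/2+)}$ and applying Cauchy--Schwarz in $\tau$ (which is summable precisely because the exponent exceeds $1/2$) yields $\|F\|_{L^8_{xt}} \lesssim \|F\|_{X_{0,1/2+}}$. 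This is a completely routine argument of the type already invoked for Lemmas 2.2 and 2.3.

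Having the two endpoints $(q,b) = (8, \tfrac12+)$ and $(q,b) = (2, 0+)$, I would interpolate. Fix $q \in (2,8)$ and choose $\theta \in (0,1)$ with $\tfrac1q = \tfrac{\theta}{8} + \tfrac{1-\theta}{2}$, i.e. $\theta = \tfrac{4(q-2)}{3q}$. Bilinear/linear complex interpolation of the map $F \mapsto F$ between $X_{0,1/2+} \to L^8_{xt}$ and $X_{0,0+} \to L^2_{xt}$ gives $\|F\|_{L^q_{xt}} \lesssim \|F\|_{X_{0,b}}$ with $b = \theta(\tfrac12+) + (1-\theta)(0+) = \tfrac{\theta}{2}+ = \tfrac{2(q-2)}{3q}+$. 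Since the $X_{0,b}$ norm is nondecreasing in $b$, the estimate then holds for every $\rho \geq \tfrac{2(q-2)}{3q}$, which is exactly the claimed range. The genuinely delicate point is not the interpolation arithmetic but making the transference argument for the $q=8$ endpoint watertight — in particular justifying the passage from the sharp group estimate to the $X_{0,b}$ estimate with the correct loss (an extra $0+$ in the modulation exponent) and handling the Fourier-support slicing so that the constant is independent of the profile; everything downstream is bookkeeping.
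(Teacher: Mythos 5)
Your proof follows exactly the route the paper intends: the sentence preceding the lemma says it is obtained by combining the free-group estimate (\ref{TSE4}) (transferred to $X_{0,\frac{1}{2}+}$ by the standard argument, as in Lemma 2.2) with the identity (\ref{L2}), and then interpolating; your exponent computation $\theta=\frac{4(q-2)}{3q}$, $b=\frac{\theta}{2}=\frac{2(q-2)}{3q}$ matches the stated threshold. The argument is correct and essentially identical to the paper's (unwritten) proof.
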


 At the end of this part, we introduce an operator which first
appeared in \cite{G} (a similar argument was used in \cite{CKS1}).
Define the bilinear Fourier integral operator
$I^s(u,v)$ by
\begin{equation}
\widehat{I^s(u,v)}(\xi,\tau)=\displaystyle\int_{\star}|\phi'(\xi_1)-\phi'(\xi_2)|^s
\hat{u}(\xi_1,\tau_1)\hat{v}(\xi_2,\tau_2),\label{Is}
\end{equation}
where $\displaystyle\int_\star =\int_{\stackrel{\xi_1+\xi_2=\xi,}{
\tau_1+\tau_2=\tau}}\,d\xi_1 d\tau_1$.
Now we give some estimates on this operator.
\begin{lem}
Let $I^{\frac{1}{2}}$ be defined by (\ref{Is}), then for any $u,v\in
X_{0,\frac{1}{2}+}$,
\begin{equation}
\Big\|I^{\frac{1}{2}}(u,v)\Big\|_{L^2_{xt}} \lesssim
\|u\|_{X_{0,\frac{1}{2}+}}\,\|v\|_{X_{0,\frac{1}{2}+}}.\label{IE}
\end{equation}
\end{lem}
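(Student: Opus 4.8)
The plan is to prove the bilinear estimate \eqref{IE} by a standard Plancherel-plus-Cauchy--Schwarz argument in which the weight $|\phi'(\xi_1)-\phi'(\xi_2)|^{1/2}$ is absorbed into the resonance function of the two inputs. First I would write $\phi'(\xi)=-2\nu|\xi|+3\mu\xi^2$, so that
\[
\phi'(\xi_1)-\phi'(\xi_2)=3\mu(\xi_1^2-\xi_2^2)-2\nu(|\xi_1|-|\xi_2|).
\]
The key algebraic point is the identity for the modulations: with $\sigma_j=\tau_j-\phi(\xi_j)$ ($j=1,2$) and $\sigma=\tau-\phi(\xi)$ where $\xi=\xi_1+\xi_2$, $\tau=\tau_1+\tau_2$, one has
\[
\sigma-\sigma_1-\sigma_2=\phi(\xi_1+\xi_2)-\phi(\xi_1)-\phi(\xi_2),
\]
and the right-hand side factors (up to the Hilbert-transform correction terms) as roughly $3\mu\,\xi_1\xi_2(\xi_1+\xi_2)$ plus lower-order contributions coming from $\nu$. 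Comparing this with $\phi'(\xi_1)-\phi'(\xi_2)$, and using that $|\xi_1^2-\xi_2^2|=|\xi_1-\xi_2||\xi_1+\xi_2|$, I expect to obtain a pointwise bound of the form
\[
|\phi'(\xi_1)-\phi'(\xi_2)|\lesssim \big|\phi(\xi_1+\xi_2)-\phi(\xi_1)-\phi(\xi_2)\big| + (\text{harmless terms}),
\]
at least in the regime where the frequencies are not comparable; the comparable-frequency regime $|\xi_1|\sim|\xi_2|$ must be treated separately since there $\xi_1^2-\xi_2^2$ can be small while $\phi'(\xi_1)-\phi'(\xi_2)$ need not be, but in that regime $|\phi'(\xi_1)-\phi'(\xi_2)|\lesssim \langle\xi_1\rangle\langle\xi_2\rangle$ is itself controlled and can be distributed as $\langle\xi_1\rangle^{1/2}\langle\xi_2\rangle^{1/2}$, which is absorbed by dropping down from $b=\tfrac12+$ only if we have spatial derivatives to spare — so more carefully one uses that when $|\xi_1|\sim|\xi_2|$ the Hilbert correction $|\,|\xi_1|-|\xi_2|\,|$ is tiny and the quadratic part dominates as before.

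Granting the pointwise comparison, the proof proceeds as follows. By the triangle inequality $\max(|\sigma|,|\sigma_1|,|\sigma_2|)\gtrsim |\sigma-\sigma_1-\sigma_2|\gtrsim|\phi'(\xi_1)-\phi'(\xi_2)|$ (modulo the harmless terms), so
\[
|\phi'(\xi_1)-\phi'(\xi_2)|^{1/2}\lesssim \langle\sigma\rangle^{1/2}+\langle\sigma_1\rangle^{1/2}+\langle\sigma_2\rangle^{1/2}.
\]
I split $I^{1/2}(u,v)$ into three pieces accordingly. In the piece where $\langle\sigma_1\rangle^{1/2}$ wins, the weight is absorbed into $u$: writing $U$ for the function with $\hat U(\xi_1,\tau_1)=\langle\sigma_1\rangle^{1/2}|\hat u(\xi_1,\tau_1)|$, we have $\|U\|_{X_{0,0}}\le\|u\|_{X_{0,1/2}}\le\|u\|_{X_{0,1/2+}}$, and the term is bounded by $\|(\text{something})\|_{L^2_{xt}}$ which by Plancherel equals an $L^4_{xt}\times L^4_{xt}$-type product — more precisely, by Hölder and \eqref{XE3} with $q=4$ (note $\tfrac{2(4-2)}{3\cdot4}=\tfrac13<\tfrac12$), after noting that the loss of the full $\tfrac12$ derivative in modulation on the $u$ factor is compensated because the other two factors retain $X_{0,1/2+}$ regularity, hence lie in $L^4_{xt}$ with room to spare. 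The piece where $\langle\sigma\rangle^{1/2}$ wins is handled dually: the weight moves onto the output, and one uses the $X_{0,-1/2+}$ norm of the product, i.e. one tests against an $L^2$ function and again reduces to an $L^4_{xt}\times L^4_{xt}\times L^4_{xt}$ Hölder estimate via Lemma 2.4. The $\langle\sigma_2\rangle^{1/2}$ piece is symmetric to the first.

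The main obstacle, as indicated above, is establishing the pointwise multiplier comparison
\[
|\phi'(\xi_1)-\phi'(\xi_2)|\lesssim \big|\phi(\xi_1+\xi_2)-\phi(\xi_1)-\phi(\xi_2)\big|+\big(\langle\xi_1\rangle\langle\xi_2\rangle\big)^{?}
\]
uniformly, because the Benjamin phase $\phi(\xi)=-\nu\xi|\xi|+\mu\xi^3$ is not homogeneous and the $|\xi|$ term spoils the clean KdV-type factorization; one must carefully track the cases $\mathrm{sgn}(\xi_1)=\mathrm{sgn}(\xi_2)$ versus opposite signs, and the crossover region $|\xi_1|\sim|\xi_2|$ near the zeros of $\phi'$. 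This is exactly the point where the constant $a=2\max(1,|2\nu/3\mu|)$ and the Fourier projections $P^a,P_a$ enter in the companion estimates of this section; here, since \eqref{IE} is an $L^2$-to-$L^2$ bound with no derivative loss claimed, one can afford to put the bad low-frequency/crossover contribution into a trivial term bounded by $\|u\|_{X_{0,1/2+}}\|v\|_{X_{0,1/2+}}$ directly (the symbol being $O(1)$ there), so the delicate analysis is confined to the genuinely high-frequency regime where the factorization works. Once that comparison is in hand, the rest is the routine $L^4$-trilinear bookkeeping described above.
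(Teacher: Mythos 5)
Your proposal rests on the pointwise comparison $|\phi'(\xi_1)-\phi'(\xi_2)|\lesssim|\phi(\xi_1+\xi_2)-\phi(\xi_1)-\phi(\xi_2)|+(\text{harmless terms})$, and this is false precisely in the regime that carries the content of the lemma. Take $|\xi_2|\ll 1\ll|\xi_1|$ (high--low interaction): then $|\phi'(\xi_1)-\phi'(\xi_2)|\sim|\mu|\,\xi_1^2$, while the resonance function $\phi(\xi_1+\xi_2)-\phi(\xi_1)-\phi(\xi_2)\sim\xi_1^2\xi_2\to 0$ as $\xi_2\to 0$. So the "harmless term" would have to absorb a weight of size $|\xi_1|$ (after the square root), i.e.\ a full spatial derivative on the high-frequency factor, and there the symbol is emphatically not $O(1)$. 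That regime is exactly where (\ref{IE}) encodes the bilinear Kato smoothing gain of one derivative, which cannot be recovered from $L^4_{xt}$ Strichartz bounds (which gain no derivatives) nor from the modulation triangle inequality (since the resonance function vanishes there). A second, independent problem: even where your comparison does hold, after spending the full $\langle\sigma_1\rangle^{1/2}$ to dominate the weight, the corresponding factor retains only $X_{0,0+}$ regularity, i.e.\ it is essentially just in $L^2_{xt}$; your proposed $L^4_{xt}\times L^4_{xt}$ H\"older step then requires the remaining factor in $L^\infty_{xt}$, which is not controlled by $X_{0,\frac{1}{2}+}$. The resonance-based decomposition is the right tool for estimates of the type $\|\partial_x(uv)\|_{X_{s,b-1}}$, where the output modulation weight $\langle\sigma\rangle^{b-1}$ gives you something to spend, but not for this clean $L^2$ bound with the weight $|\phi'(\xi_1)-\phi'(\xi_2)|^{1/2}$.

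The paper's proof explains why that particular weight appears and why no case analysis or resonance identity is needed: by duality one writes the left side of (\ref{IE}) as a trilinear integral, substitutes $\tau_j=\lambda_j+\phi(\xi_j)$, and then changes variables $(\xi_1,\xi_2)\mapsto(\eta,\omega)=(\xi_1+\xi_2,\;\lambda_1+\lambda_2+\phi(\xi_1)+\phi(\xi_2))$. The Jacobian of this map is exactly $|J|=|\phi'(\xi_1)-\phi'(\xi_2)|$, so the weight $|\phi'(\xi_1)-\phi'(\xi_2)|^{1/2}$ cancels against $|J|^{1/2}$, Cauchy--Schwarz in $(\eta,\omega)$ against $\hat h$ produces $\|u\hat{\ }(\cdot,\lambda_j+\phi(\cdot))\|_{L^2}$, and the remaining $\lambda_1,\lambda_2$ integrals are summed using $\langle\lambda_j\rangle^{\frac{1}{2}+}$, which is where the $X_{0,\frac{1}{2}+}$ norms enter. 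This argument is insensitive to the structure of $\phi$ (homogeneity, sign of $\nu$, zeros of $\phi'$), which is exactly what makes the lemma usable for the Benjamin phase; the constant $a$ and the projections $P^a$ play no role here.
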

\begin{proof}
We use the argument in \cite{CKS1} to prove the result. By the
definition (\ref{Is}) and the duality,
the left-hand side of (\ref{IE}) is equal to
\begin{equation}
\sup_{\|h\|_{L^2}\leq 1}\displaystyle\int
     |\phi'(\xi_1)-\phi'(\xi_2)|^{\frac{1}{2}}
     \hat{h}(\xi_1+\xi_2,\tau_1+\tau_2)
     \hat{u}(\xi_1,\tau_1)\hat{v}(\xi_2,\tau_2)
     \,d\xi_1 d\xi_2 d\tau_1 d\tau_2.\\\label{216}
\end{equation}
First, we change variables by setting
$$
\tau_1=\lambda_1+\phi(\xi_1),\quad \tau_2=\lambda_2+\phi(\xi_2),
$$
then (\ref{216}) is changed into
\renewcommand{\arraystretch}{2}
\begin{eqnarray}
 &&  \sup_{\|h\|_{L^2}\leq 1}\displaystyle\int
     |\phi'(\xi_1)-\phi'(\xi_2)|^{\frac{1}{2}}
     \hat{h}(\xi_1+\xi_2,\lambda_1+\lambda_2+\phi(\xi_1)+\phi(\xi_2))\nonumber\\
 &&  \qquad \qquad \cdot \hat{u}(\xi_1,\lambda_1+\phi(\xi_1))
     \hat{v}(\xi_2,\lambda_2+\phi(\xi_2))\, d\lambda_1 d\lambda_2 d\xi_1 d\xi_2.
     \label{2.16}
\end{eqnarray}
We change variables again as follows. Let
\begin{equation}
 (\eta,\omega)=T(\xi_1,\xi_2),\label{transform}
\end{equation}
where \beqs
\begin{split}
 \eta & = T_1(\xi_1,\xi_2)=\xi_1+\xi_2,\\
 \omega & = T_2(\xi_1,\xi_2)=\lambda_1+\lambda_2+\phi(\xi_1)+\phi(\xi_2).
\end{split}
\eeqs Then the Jacobian $J$ of this transform satisfies
$$
|J|=|\phi'(\xi_1)-\phi'(\xi_2)|.
$$
Define
$$
H(\eta,\omega,\lambda_1,\lambda_2)= \hat{u}\hat{v}\circ
T^{-1}(\eta,\omega,\lambda_1,\lambda_2),
$$
then, by eliminating $|J|^{\frac{1}{2}}$ with
$|\phi'(\xi_1)-\phi'(\xi_2)|^{\frac{1}{2}}$,
(\ref{2.16}) has a bound of
\begin{equation}
    \sup_{\|h\|_{L^2}\leq 1}\displaystyle\int \hat{h}(\eta,\omega)
    \cdot \dfrac{ H(\eta,\omega,\lambda_1,\lambda_2)}
    {|J|^{\frac{1}{2}}}
    \,d\eta d\omega d\lambda_1 d\lambda_2.\label{2.17}
\end{equation}
Further, by H\"{o}lder's inequality we have
\renewcommand{\arraystretch}{2}
\begin{eqnarray*}
(\ref{2.17})&\leq&
      \sup_{\|h\|_{L^2}\leq 1}\big\|\hat{h}\big\|_{L^2_{\eta\omega}}
      \cdot\displaystyle\int
      \Big(\int \dfrac{|H(\eta,\omega,\lambda_1,\lambda_2)|^2}{|J|}
      \,d\eta \omega\Big)^{\frac{1}{2}}
      \,d\lambda_1 d\lambda_2\nonumber\\
&\lesssim&
      \displaystyle\int
      \|\hat{u}(\xi_1,\lambda_1+\phi(\xi_1))\|_{L^2_{\xi_1}}
      \,d\lambda_1
      \cdot
      \displaystyle\int
      \|\hat{v}(\xi_2,\lambda_2+\phi(\xi_2))\|_{L^2_{\xi_2}}
      \,d\lambda_2\\
&\lesssim&
      \|u\|_{X_{0,\frac{1}{2}+}}\,\|v\|_{X_{0,\frac{1}{2}+}},\nonumber
\end{eqnarray*}
where we have employed the inverse transform of (\ref{transform}) in
the second step, triangle and H\"{o}lder's inequalities in the third
step. \hfill$\Box$
\end{proof}

When $s=0$, by (\ref{XE3}) we have
\begin{equation}
\|uv\|_{L^2_{xt}}\leq
\|u\|_{L^8_{xt}}\|v\|_{L^{\frac{8}{3}}}\lesssim
\|u\|_{X_{0,\frac{1}{2}+}}\, \|v\|_{X_{0,\frac{1}{6}+}}.
\label{2.18}
\end{equation}
Interpolation between (\ref{IE}) and (\ref{2.18}), we have
\begin{cor}
Let $I^{s}$ be defined by (\ref{Is}), then for any $s\in
[0,\dfrac{1}{2}]$, $\tilde{b}\geq \dfrac{1}{6}+\dfrac{2}{3}s$,
\begin{equation}
\|I^{s}(u,v)\|_{L^2_{xt}} \lesssim
\|u\|_{X_{0,\frac{1}{2}+}}\>
\|v\|_{X_{0,\tilde{b}+}}.
\label{IE1}
\end{equation}
\end{cor}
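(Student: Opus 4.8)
The plan is to deduce (\ref{IE1}) from the two endpoint estimates already in hand by complex interpolation of an analytic family of bilinear operators. First note that (\ref{2.18}) is precisely the case $s=0$ of (\ref{IE1}): since $|\phi'(\xi_1)-\phi'(\xi_2)|^{0}\equiv1$ off a null set, (\ref{Is}) gives $\widehat{I^{0}(u,v)}=\hat u\ast\hat v=\widehat{uv}$, so $I^{0}(u,v)=uv$ and (\ref{2.18}) reads $\|I^{0}(u,v)\|_{L^{2}_{xt}}\lesssim\|u\|_{X_{0,\frac12+}}\|v\|_{X_{0,\frac16+}}$, while (\ref{IE}) is the case $s=\frac12$ with second exponent $\frac12+$. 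The claimed exponent $\tilde b=\frac16+\frac23 s$ is exactly the linear interpolant: with $s=\frac\theta2$, $\theta\in[0,1]$, one has $\tilde b=(1-\theta)\cdot\frac16+\theta\cdot\frac12$.

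Fix $\epsilon>0$ and a Schwartz function $u$, and for $z$ in the closed strip $0\le\mathrm{Re}\,z\le1$ define the linear operator $T_{z}$ by
\[
\widehat{T_{z}v}(\xi,\tau)=\int_{\star}|\phi'(\xi_1)-\phi'(\xi_2)|^{z/2}\,\hat u(\xi_1,\tau_1)\,\hat v(\xi_2,\tau_2),
\]
so that $T_{0}v=uv$, $T_{1}v=I^{1/2}(u,v)$ and, in general, $T_{2s}v=I^{s}(u,v)$; for Schwartz $u,v$ the map $z\mapsto\widehat{T_{z}v}(\xi,\tau)$ is entire, and the boundary bounds established next are uniform in $\mathrm{Im}\,z$, so $\{T_{z}\}$ is an admissible analytic family. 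On the edge $\mathrm{Re}\,z=0$ the multiplier is unimodular, hence $|\widehat{T_{iy}v}|\le|\hat u|\ast|\hat v|$ pointwise; applying (\ref{2.18}) to the functions whose Fourier transforms are $|\hat u|$ and $|\hat v|$ (whose $X_{0,b}$-norms coincide with those of $u,v$) gives $\|T_{iy}v\|_{L^{2}_{xt}}\lesssim\|u\|_{X_{0,\frac12+\epsilon}}\|v\|_{X_{0,\frac16+\epsilon}}$, uniformly in $y$. On the edge $\mathrm{Re}\,z=1$ the multiplier is $|\phi'(\xi_1)-\phi'(\xi_2)|^{1/2}$ times a unimodular factor; tracing through the proof of Lemma 2.5, that factor survives the change of variables (\ref{transform}) only as a modulus-one coefficient multiplying $H/|J|^{1/2}$ in (\ref{2.17}) and is absorbed at the H\"{o}lder step, so $\|T_{1+iy}v\|_{L^{2}_{xt}}\lesssim\|u\|_{X_{0,\frac12+\epsilon}}\|v\|_{X_{0,\frac12+\epsilon}}$, again uniformly in $y$.

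Now apply Stein's interpolation theorem to $\{T_{z}\}$, viewed as operators into $L^{2}_{xt}$ from $X_{0,\frac16+\epsilon}$ on the edge $\mathrm{Re}\,z=0$ and from $X_{0,\frac12+\epsilon}$ on the edge $\mathrm{Re}\,z=1$, with operator bound $\lesssim\|u\|_{X_{0,\frac12+\epsilon}}$ on both edges. Since these are weighted $L^{2}$ spaces one has $[X_{0,b_{0}},X_{0,b_{1}}]_{\theta}=X_{0,(1-\theta)b_{0}+\theta b_{1}}$, so evaluating at $z=2s=\theta$ yields
\[
\|I^{s}(u,v)\|_{L^{2}_{xt}}=\|T_{2s}v\|_{L^{2}_{xt}}\lesssim\|u\|_{X_{0,\frac12+\epsilon}}\,\|v\|_{X_{0,(\frac16+\frac23 s)+\epsilon}} .
\]
As $\epsilon>0$ is arbitrary and Schwartz functions are dense in the relevant Bourgain spaces, this is (\ref{IE1}) for $\tilde b=\frac16+\frac23 s$, and the case of general $\tilde b\ge\frac16+\frac23 s$ follows since $\|v\|_{X_{0,\tilde b}}$ is nondecreasing in $\tilde b$. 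I expect the only delicate point to be the bookkeeping on the edge $\mathrm{Re}\,z=1$ — confirming that the unimodular phase genuinely drops out of the argument of Lemma 2.5 — together with the routine check that $\{T_{z}\}$ is an admissible family on a dense class; the interpolation itself is standard.
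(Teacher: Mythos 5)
Your proof is correct and is essentially the paper's own argument: the paper derives the corollary by the single phrase ``interpolation between (\ref{IE}) and (\ref{2.18})'', and your Stein analytic-family interpolation in $v$ (with $u$ frozen, conjugating the weighted $L^2$ spaces $X_{0,b}$) is precisely the standard way to make that one-line claim rigorous, with the exponent $\tilde b=\frac16+\frac23 s$ matching the linear interpolant at $\theta=2s$. The boundary checks (unimodularity at $\mathrm{Re}\,z=0$, harmlessness of the extra phase in the Lemma 2.5 argument at $\mathrm{Re}\,z=1$) are the right delicate points and you handle them correctly.
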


It's easy to verify that Lemma 2.5 and Corollary 2.6 still hold if one replaces
the operator $I^s$ by the one (still denoted by $I^s$) defined as
\begin{equation}\label{Is1}
\widehat{I^s(u,v)}(\xi_1,\tau_1)
=\displaystyle\int_{\star}|\phi'(\xi)-\phi'(\xi_2)|^{\frac{1}{2}}
\hat{u}(\xi,\tau)\hat{v}(\xi_2,\tau_2).
\end{equation}

We now continue to present some estimates of the group $\{S(t)\}$ in
$X_{s,b}$. We denote $\psi(t)$ to be an even smooth characteristic
function of the interval $[-1,1]$.
\begin{lem}(\cite{KPV4}) Let $\delta\in (0,1)$, $s\in \R$, then the following
estimates hold: \bi
\item[{\rm(i)}]
           $\|u\|_{C_t^0 (H_x^s;\R)}
            \lesssim\|u\|_{X_{s,b}}$,
           $\forall$ $b\in (\dfrac{1}{2},1],u\in X_{s,b}$;

\item[{\rm(ii)}]
           $\|\psi(t)S(t)u_0\|_{X_{s,b}}
            \lesssim\|u_0\|_{H^s}$,
           $\forall$ $b\in (\dfrac{1}{2},1],u_0\in H^s(\R)$;

\item[{\rm(iii)}]
          $\left\|\psi(t)\displaystyle\int^t_0 S(t-s)F(s)\,ds
          \right\|_{X_{s,b}}
          \lesssim
          \left\|F\right\|_{X_{s,b-1}}$,
          $\forall$ $b\in (\dfrac{1}{2},1],F\in X_{s,b-1}$;

\item[{\rm(iv)}]
          $\|\psi(t/\delta)f\|_{X_{s,b'}}
          \lesssim
          \delta^{b-b'}\|f\|_{X_{s,b}}$,
          for $0\leq b'\leq b<\dfrac{1}{2}$.
\ei
\end{lem}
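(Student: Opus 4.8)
The plan is to prove all four estimates at once by reducing them, through conjugation with the group, to one-dimensional statements in the time variable. Define the space–time operator $\Lambda$ by $(\Lambda u)(x,t)=\big(S(-t)u(\cdot,t)\big)(x)$, so that in the spatial Fourier variable $\widehat{\Lambda u}(\xi,t)=e^{-it\phi(\xi)}\hat u(\xi,t)$ (Fourier transform in $x$). Changing variables $\tau\mapsto\tau-\phi(\xi)$ in the full space–time transform gives
\beqs
\|u\|_{X_{s,b}}=\|\Lambda u\|_{H^b_tH^s_x},
\eeqs
where $H^b_tH^s_x$ carries the weight $\langle\xi\rangle^{s}\langle\tau\rangle^{b}$. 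The two properties I would use are that $\Lambda$ commutes with multiplication by any function of $t$ alone, and that it undoes the group in the sense $\Lambda\big[\,t\mapsto S(t)v(\cdot,t)\,\big]=v$. Each of (i)--(iv) then becomes an assertion involving only the single weight $\langle\tau\rangle$.

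Given this reduction, (i) and (ii) are immediate. For (ii), $\Lambda[\psi(t)S(t)u_0]=\psi(t)u_0$, whose norm factors as $\|\psi\|_{H^b_t}\|u_0\|_{H^s_x}\lesssim\|u_0\|_{H^s}$ since $\psi$ is smooth and compactly supported. For (i), since $S(t)$ is an $H^s_x$-isometry for each $t$ we have $\|u(\cdot,t)\|_{H^s_x}=\|(\Lambda u)(\cdot,t)\|_{H^s_x}$, and Cauchy--Schwarz in $\tau$ gives $\sup_t|\widehat{\Lambda u}(\xi,t)|^2\lesssim\int\langle\tau\rangle^{2b}|\widetilde{\Lambda u}(\xi,\tau)|^2\,d\tau$ precisely when $2b>1$, because then $\int\langle\tau\rangle^{-2b}\,d\tau<\infty$; integrating against $\langle\xi\rangle^{2s}\,d\xi$ yields the $L^\infty_tH^s_x$ bound, and continuity in $t$ follows by density of Schwartz functions.

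The substantive estimate is (iii). Writing $\int_0^tS(t-s)F(s)\,ds=S(t)\int_0^tS(-s)F(s)\,ds$ and applying $\Lambda$ gives $\Lambda\big[\psi(t)\int_0^tS(t-s)F\,ds\big]=\psi(t)\int_0^tG(\cdot,s)\,ds$ with $G=\Lambda F$ and $\|G\|_{H^{b-1}_tH^s_x}=\|F\|_{X_{s,b-1}}$. Integrating against $\langle\xi\rangle^{2s}\,d\xi$, it suffices to prove the uniform temporal bound
\beqs
\Big\|\psi(t)\int_0^t g(s)\,ds\Big\|_{H^b_t}\lesssim\|g\|_{H^{b-1}_t},\qquad b\in(\tfrac12,1].
\eeqs
Here I would use $\int_0^t g=\int\frac{e^{it\tau}-1}{i\tau}\,\hat g(\tau)\,d\tau$ and split the $\tau$-integral into $|\tau|>1$ and $|\tau|\le1$. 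On $|\tau|>1$ the kernel splits as $e^{it\tau}/(i\tau)$, transferring a factor $\langle\tau\rangle^{-1}$ onto $\hat g$ and handled together with $\psi$, plus a constant-in-$t$ term $-\int_{|\tau|>1}\hat g/(i\tau)\,d\tau$ whose modulus is $\lesssim\|g\|_{H^{b-1}_t}$ exactly when $b>1/2$; on $|\tau|\le1$ one expands $\frac{e^{it\tau}-1}{i\tau}=t\sum_{k\ge0}\frac{(it\tau)^k}{(k+1)!}$, producing terms $t^{k+1}\psi(t)$ times moments of $\hat g$ that are controlled since $\langle\tau\rangle\sim1$ there. This decomposition is the main obstacle: it is where the cutoff $\psi$, the endpoint $b=1$, and the requirement $b>1/2$ all interact, and each piece must be shown to land in $H^b_t$ with the correct power of $\langle\tau\rangle$.

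Finally, (iv) reduces via $\Lambda[\psi(t/\delta)f]=\psi(t/\delta)\Lambda f$ to the rescaled bound $\|\psi(t/\delta)g\|_{H^{b'}_t}\lesssim\delta^{b-b'}\|g\|_{H^b_t}$ for $0\le b'\le b<1/2$, which I would obtain by interpolating two endpoints. At $b'=0$, Hölder together with the one-dimensional embedding $H^b_t\hookrightarrow L^{1/(1/2-b)}_t$ gives $\|\psi(t/\delta)g\|_{L^2_t}\le\|\psi(t/\delta)\|_{L^{1/b}_t}\|g\|_{L^{1/(1/2-b)}_t}\lesssim\delta^b\|g\|_{H^b_t}$, the factor $\delta^b$ arising from $\|\psi(t/\delta)\|_{L^{1/b}_t}\sim\delta^b$. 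At $b'=b$, multiplication by $\psi(t/\delta)$ is bounded on $H^b_t$ uniformly in $\delta$, which holds precisely because $|b|<1/2$. Interpolating the targets $[L^2_t,H^b_t]_\theta=H^{\theta b}_t$ with $\theta=b'/b$ combines the gains as $(\delta^b)^{1-\theta}\cdot 1^{\theta}=\delta^{b-b'}$, as required; the constraint $b<1/2$ here mirrors $b>1/2$ in (iii). Undoing the conjugation $\Lambda$ in each case completes the proof, following the classical approach of \cite{KPV4}.
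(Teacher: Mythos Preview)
The paper does not supply its own proof of this lemma: it simply cites \cite{KPV4}, and the remark immediately following the statement points to \cite{CKS1} for the endpoint case $b=b'$ of (iv). Your conjugation-by-the-group reduction to purely temporal estimates is exactly the classical route in those references, and your treatment of (i)--(iii) is correct.

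For (iv), your interpolation scheme is also the standard one and is correct, but it is worth noting that the endpoint you invoke --- that multiplication by $\psi(\cdot/\delta)$ is bounded on $H^b_t$ \emph{uniformly in} $\delta\in(0,1)$ when $0\le b<\tfrac12$ --- is precisely the nontrivial ingredient the paper isolates by citing \cite{CKS1}. It does not follow from the crude convolution bound (that argument loses $\delta^{-b}$); one genuinely needs that sharp time cutoffs $\chi_{[a,c]}$ are bounded multipliers on $H^b(\R)$ for $|b|<\tfrac12$ (equivalently, boundedness of the Hilbert transform on the weighted space $L^2(\langle\tau\rangle^{2b}\,d\tau)$), from which the smooth cutoff case follows. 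You state the conclusion correctly and identify the restriction $|b|<\tfrac12$ as the reason, so this is not a gap in your argument, only a place where the reader should be aware that a real fact is being used.
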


\noindent{\it Remark.} See \cite{CKS1} for the proof of Lemma 2.7 (iv) when $b=b'$.
\begin{cor}Let $b\in [0,\dfrac{1}{2})$, $\delta\in (0,1)$, then
\beq
 \|u\|_{X_{s,b}^\delta}\lesssim \delta^{(\frac{1}{2}-b)-}
 \|u\|_{X_{s,\frac{1}{2}}^\delta}.\label{cor}
\eeq
\end{cor}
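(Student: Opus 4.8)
The plan is to obtain this from Lemma 2.7(iv) by the standard extension/restriction trick for Bourgain spaces. First I would fix $u\in X_{s,\frac12}^\delta$ and, using the remark following the definition of $X_{s,b}^\delta$, pick an extension $\tilde u\in X_{s,\frac12}$ with $\tilde u=u$ on $[-\delta,\delta]$ and $\|\tilde u\|_{X_{s,\frac12}}=\|u\|_{X_{s,\frac12}^\delta}$. The issue is that Lemma 2.7(iv) requires both exponents to lie strictly below $\tfrac12$, so I cannot apply it directly with the upper exponent equal to $\tfrac12$. The remedy is to lose an $\epsilon$: write $\tfrac12-\epsilon$ in place of $\tfrac12$ for the source exponent. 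Since $\|\tilde u\|_{X_{s,\frac12-\epsilon}}\leq\|\tilde u\|_{X_{s,\frac12}}$ (the weight $\langle\tau-\phi(\xi)\rangle^{2b}$ is monotone in $b$ on the relevant range, or simply because $\langle\cdot\rangle\geq1$), this costs nothing.

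Next I would introduce the time cutoff. Because $\psi(t/\delta)\tilde u$ agrees with $u$ on $[-\delta,\delta]$, it is an admissible competitor in the infimum defining $\|u\|_{X_{s,b}^\delta}$, so
\beq
\|u\|_{X_{s,b}^\delta}\leq\|\psi(t/\delta)\tilde u\|_{X_{s,b}}.\nonumber
\eeq
Now apply Lemma 2.7(iv) with $b'=b\in[0,\tfrac12)$ and upper exponent $\tfrac12-\epsilon$ (legitimate once $\epsilon$ is small enough that $b\leq\tfrac12-\epsilon$, which is exactly the range where the statement $b<\tfrac12$ is used): this gives
\beq
\|\psi(t/\delta)\tilde u\|_{X_{s,b}}\lesssim\delta^{(\frac12-\epsilon)-b}\,\|\tilde u\|_{X_{s,\frac12-\epsilon}}\leq\delta^{(\frac12-b)-\epsilon}\,\|\tilde u\|_{X_{s,\frac12}}=\delta^{(\frac12-b)-\epsilon}\,\|u\|_{X_{s,\frac12}^\delta}.\nonumber
\eeq
Since $\epsilon>0$ is arbitrary, this is precisely the claimed bound $\|u\|_{X_{s,b}^\delta}\lesssim\delta^{(\frac12-b)-}\|u\|_{X_{s,\frac12}^\delta}$, with the implicit constant absorbing the $\delta^{-\epsilon}$-type dependence through the $a-$ notation.

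The one point deserving care — and the only genuine obstacle — is the endpoint: Lemma 2.7(iv) is stated for $0\le b'\le b<\tfrac12$, so one truly must sacrifice an arbitrarily small power of $\delta$ to move the source regularity down from $\tfrac12$ to something $<\tfrac12$; this is the source of the $-$ in the exponent and cannot be removed at this level of generality. Everything else is bookkeeping: verifying that $\psi(t/\delta)\tilde u$ is a valid extension (immediate, since $\psi\equiv1$ on $[-1,1]$) and that passing from $X_{s,\frac12}$ to $X_{s,\frac12-\epsilon}$ only decreases the norm.
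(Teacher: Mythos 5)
Your argument is correct and is essentially identical to the paper's proof: take the norm-attaining extension $\tilde u$, observe that $\psi(t/\delta)\tilde u$ is an admissible competitor for $\|u\|_{X_{s,b}^\delta}$, and apply Lemma 2.7(iv). The only difference is that you make explicit the $\epsilon$-loss needed to handle the endpoint source exponent $\tfrac12$, which the paper leaves implicit in its $(\tfrac12-b)-$ notation.
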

\begin{proof}Let $\tilde{u}$ be the extension of
$u\in X_{s,\frac{1}{2}}^\delta$ defined after
Definition 1.1. By Lemma 2.7 (iv),
$$
\|u\|_{X_{s,b}^\delta}\leq
\|\psi(t/\delta)\tilde{u}\|_{X_{s,b}}\lesssim\delta^{(\frac{1}{2}-b)-}
\|\tilde{u}\|_{X_{s,\frac{1}{2}}}
=\delta^{(\frac{1}{2}-b)-}\|u\|_{X_{s,\frac{1}{2}}^\delta}.
$$
This completes the proof of the corollary.  \hfill$\Box$
\end{proof}

\vspace{0.5cm}
\section{A Bilinear Estimate and the Local Well-posedness}

In this section, we will establish a variant local well-posedness
result as follows.
\begin{prop}
Let $s> -3/4$, then IVP (\ref{kbo})-(\ref{1.2}) is locally well-posed
in $H^s(\R)$.  Moreover, the solution exists on the interval
$[0,\delta]$ with the lifetime
\begin{equation}
 \delta\sim\|I_{N,s}u_0\|^{-2-}_{L^2}\label{delta}
\end{equation}
 when $N\geq N_0$ for some large number $N_0$ such that
 \beq
N_0^{-\frac{3}{2}+}\cdot \|I_{N_0,s}u_0\|_{L^2} \sim 1,\label{cond}
 \eeq
further, the solution satisfies the estimate \beq
    \|I_{N,s}u\|_{X_{0,\frac{1}{2}+}^\delta}
    \lesssim
    \|I_{N,s}u_0\|_{L^2}.\label{LSE}
\eeq
\end{prop}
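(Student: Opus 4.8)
The plan is to run the standard Picard iteration for the Duhamel formulation of (\ref{kbo})--(\ref{1.2}), but to carry it out for the \emph{gauged} variable $I_{N,s}u$ rather than for $u$ directly, so that the fixed point is performed in the $X_{0,\frac12+}$-scale and the lifetime is governed by $\|I_{N,s}u_0\|_{L^2}$. Write $v=I_{N,s}u$; since $I_{N,s}$ is a Fourier multiplier it commutes with $S(t)$, with $\partial_x$, and with time integration, so $v$ solves $v=\psi(t/\delta)S(t)v_0-\psi(t/\delta)\int_0^t S(t-t')\,\partial_x I_{N,s}\big((I_{N,s}^{-1}v)^2\big)(t')\,dt'$ with $v_0=I_{N,s}u_0$. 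Using Lemma 2.7(ii)--(iii) and the time-localization gain of Corollary 2.8, the iteration closes as soon as one has a bilinear estimate of the form
\begin{equation}
\Big\|\partial_x I_{N,s}\big((I_{N,s}^{-1}v)(I_{N,s}^{-1}w)\big)\Big\|_{X_{0,-\frac12+}^{\delta}}
\lesssim \delta^{\theta}\,\|v\|_{X_{0,\frac12+}^{\delta}}\,\|w\|_{X_{0,\frac12+}^{\delta}}
\label{prop31-bilinear}
\end{equation}
for some $\theta>0$. The key point is that the extra smoothing room created by $I_{N,s}$ at high frequencies, together with the bilinear operator $I^{1/2}$ of Lemma 2.5 and Corollary 2.6 (which is exactly tailored to the resonance function $\phi(\xi_1)+\phi(\xi_2)-\phi(\xi)$ of the Benjamin phase), gives (\ref{prop31-bilinear}) with a power of $\delta$ that scales like $\delta^{(\frac12-)}$ after one application of Corollary 2.8; this is the content of Proposition 3.2, which I would invoke here. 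Since $\|I_{N,s}u_0\|_{L^2}$ may be large (it is $\lesssim N^{-s}\|u_0\|_{H^s}$), I choose $\delta$ so that $\delta^{\theta}\|I_{N,s}u_0\|_{L^2}\sim 1$, i.e. $\delta\sim\|I_{N,s}u_0\|_{L^2}^{-1/\theta}$ with $1/\theta=2+$, which is exactly (\ref{delta}); the smallness condition (\ref{cond}) on $N_0$ is precisely what is needed to ensure $\delta<1$ so that Corollary 2.8 and Lemma 2.7(iv) apply (they require $\delta\in(0,1)$).

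With (\ref{prop31-bilinear}) in hand, the rest is routine. I would define the map $\Phi(v)=\psi(t/\delta)S(t)v_0-\psi(t/\delta)\int_0^t S(t-t')\partial_x I_{N,s}((I_{N,s}^{-1}v)^2)\,dt'$ on the ball $B_R=\{v\in X_{0,\frac12+}^{\delta}:\|v\|_{X_{0,\frac12+}^{\delta}}\le R\}$ with $R\sim\|v_0\|_{L^2}$, and check that $\Phi$ maps $B_R$ to itself and is a contraction: $\|\Phi(v)\|\le C\|v_0\|_{L^2}+C\delta^{\theta}\|v\|^2\le C\|v_0\|_{L^2}+C\delta^{\theta}R^2\le R$ provided $C\delta^{\theta}R\le 1/2$, i.e. $\delta^{\theta}\lesssim\|v_0\|_{L^2}^{-1}$, and similarly $\|\Phi(v)-\Phi(w)\|\le C\delta^{\theta}(\|v\|+\|w\|)\|v-w\|\le\frac12\|v-w\|$. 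The unique fixed point $v$ then yields $u=I_{N,s}^{-1}v\in X_{s,\frac12+}^{\delta}$ solving (\ref{kbo})--(\ref{1.2}) on $[0,\delta]$, with $\|I_{N,s}u\|_{X_{0,\frac12+}^{\delta}}=\|v\|_{X_{0,\frac12+}^{\delta}}\le R\lesssim\|I_{N,s}u_0\|_{L^2}$, which is (\ref{LSE}). Continuous dependence and the embedding into $C([0,\delta],H^s)$ follow from Lemma 2.7(i) and the contraction estimates in the usual way; persistence of regularity and uniqueness in the full space (rather than just in $B_R$) are standard once one has the local theory of \cite{KOT} available for comparison.

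The main obstacle, and the real content concentrated in the auxiliary Proposition 3.2, is establishing the bilinear estimate (\ref{prop31-bilinear}) with a \emph{positive} power of $\delta$ while simultaneously tracking how the multiplier $m_{N,s}$ interacts with the frequency interactions. In the region where all frequencies are $\lesssim N$ the operator $I_{N,s}$ is the identity and (\ref{prop31-bilinear}) reduces to the known $X_{s,b}$ bilinear estimate for the Benjamin equation at $s>-3/4$ (as in \cite{KOT}), where the $\delta$ gain comes from Corollary 2.8. The delicate case is when the output or one input frequency exceeds $N$: there one must exploit that $m_{N,s}$ is monotone and that $m_{N,s}(\xi)\le m_{N,s}(\xi_1)m_{N,s}(\xi_2)$-type comparisons hold (up to constants) in the relevant regions, combined with the $I^{1/2}$-smoothing of Corollary 2.6 to absorb the derivative $\partial_x$ and the loss from the low modulation. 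A secondary technical nuisance is that, because the Benjamin phase $\phi$ has a nontrivial critical point, one cannot use the global $L^p_xL^q_t$ Strichartz bounds without the Fourier projections $P^a$, $P_a$; near frequency $\sim a$ one falls back on the $L^2$-based estimates (\ref{XE3}) and energy arguments, and these low-frequency pieces must be handled separately but cause no loss of regularity. I expect the bookkeeping of these frequency regimes, rather than any single hard inequality, to be where the proof of Proposition 3.2 — and hence the nontrivial input to Proposition 3.1 — does its work.
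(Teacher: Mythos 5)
Your argument is structurally the same as the paper's: a contraction mapping for the Duhamel operator on the ball $B_r=\{u:\|Iu\|_{X_{0,\frac12+}^{\delta}}\le r\}$ with $r\sim\|Iu_0\|_{L^2}$, with Proposition 3.2 supplying the bilinear input and Lemma 2.7 handling the free evolution and the Duhamel term. (Whether one phrases the iteration for $u$ measured in the norm $\|I\cdot\|_{X_{0,b}^{\delta}}$, as the paper does, or for the gauged variable $v=Iu$, is immaterial, since $I_{N,s}$ is a Fourier multiplier commuting with $S(t)$ and $\partial_x$.)

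There is, however, one genuine misstatement that propagates into your explanation of the hypotheses. The bilinear estimate you posit carries only a factor $\delta^{\theta}$ on the right-hand side, whereas the estimate the paper actually proves, (\ref{Thm}), carries the factor $\delta^{\frac12-}+N^{-\frac32+}$. The second term is not removable: in the regions where the two input frequencies both exceed $N$ (Part 2 and Part 4, Subpart (I), of the proof of Proposition 3.2) the gain comes entirely from negative powers of $N$ and no positive power of $\delta$ is extracted there. Consequently the self-mapping condition for the contraction is $C_2(\delta^{\frac12-}+N^{-\frac32-})r\le\frac12$, i.e. (\ref{Nd1}), which splits into the \emph{two} requirements (\ref{Nd}): one fixing $\delta^{\frac12-}\lesssim\|Iu_0\|_{L^2}^{-1}$, whence (\ref{delta}), and one demanding $N^{-\frac32+}\|Iu_0\|_{L^2}\lesssim1$ --- and the latter is exactly the origin of the threshold condition (\ref{cond}). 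Your claim that (\ref{cond}) is ``precisely what is needed to ensure $\delta<1$'' therefore misidentifies its role: $\delta<1$ can always be arranged by simply capping $\delta$, whereas (\ref{cond}) is needed to absorb the $\delta$-independent part of the bilinear constant. If you rerun your fixed-point scheme with the correct form of (\ref{Thm}), this condition appears forcedly, and the remainder of your argument then goes through exactly as in the paper.
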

\noindent{\it Remark.} The condition (\ref{cond}) is reasonable by
taking $ N_0\gtrsim
 \|u_0\|_{H^s}^{\frac{2}{3+2s}+},$
since $\|Iu_0\|_{L^2}\leq CN^{-s}\|u_0\|_{H^s}$.

 Compared with the standard local well-posedness result, this
proposition is established for adapting to the I-method. It gives
the estimates on the lifetime and the solution under the
$X_{s,\frac{1}{2}+}^\delta$- norm, with the operator $I_{N,s}$. The
proposition is based on the following bilinear estimates.
\begin{prop}Let $s\in (-\dfrac{3}{4},0),b=\dfrac{1}{2}+$,
$\delta\in (0,1)$, $N\gg 1$, then
for any $u,v \in X_{s,b}^\delta$,
\begin{equation}
 \|\psi(t/\delta)\>\partial_xI(\tilde{u}\tilde{v})\|_{X_{0,b-1}}\lesssim
 (\delta^{\frac{1}{2}-}+N^{-\frac{3}{2}+})\|Iu\|_{X_{0,b}^\delta}
 \|Iv\|_{X_{0,b}^\delta}, \label{Thm}
\end{equation}
where $I=I_{N,s}$, $\tilde{u}$ and $\tilde{v}$ are the extensions of
$u|_{t\in [-\delta,\delta]}$ and $v|_{t\in [-\delta,\delta]}$ such that $\|Iu\|_{X_{0,b}^\delta}=
\|I\tilde{u}\|_{X_{0,b}}$ and $\|Iv\|_{X_{0,b}^\delta}=
\|I\tilde{v}\|_{X_{0,b}}$.
\end{prop}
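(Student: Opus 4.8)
The plan is to dualize and reduce (\ref{Thm}) to a pointwise multiplier estimate in Fourier space. After peeling off the cut-off $\psi(t/\delta)$ via Lemma 2.7 (iv) / Corollary 2.8 and dividing through by $m(\xi_1)m(\xi_2)$, the matter comes down to bounding, uniformly over $w$ with $\|w\|_{X_{0,1-b}}\le1$, the form
\[
 \int_{\star}\frac{|\xi|\,m(\xi)}{m(\xi_1)m(\xi_2)}\,\widehat U(\xi_1,\tau_1)\,\widehat V(\xi_2,\tau_2)\,\overline{\widehat w(\xi,\tau)},\qquad U=I\tilde u,\ V=I\tilde v.
\]
Two facts drive the argument. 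First, $m$ is monotone and slowly varying with $m\equiv1$ on $|\xi|\le N$, hence $\dfrac{m(\xi)}{m(\xi_1)m(\xi_2)}\lesssim\left(\dfrac{\max(|\xi_1|,|\xi_2|)}{N}\right)^{-2s}\lesssim\left(\dfrac{\max(|\xi_1|,|\xi_2|)}{N}\right)^{3/2-}$, the last inequality because $s>-3/4$. Second, the resonance identity
\[
 (\tau-\phi(\xi))-(\tau_1-\phi(\xi_1))-(\tau_2-\phi(\xi_2))=-\bigl[\phi(\xi_1+\xi_2)-\phi(\xi_1)-\phi(\xi_2)\bigr]=-3\mu\,\xi_1\xi_2(\xi_1+\xi_2)+R(\xi_1,\xi_2),
\]
where $R$ comes from the Hilbert-transform part $-\nu\xi|\xi|$ of $\phi$ and is dominated by the cubic term in the regime of interest, forces one of the three modulations $\langle\tau-\phi(\xi)\rangle$, $\langle\tau_j-\phi(\xi_j)\rangle$ to be $\gtrsim|\mu|\,|\xi_1\xi_2(\xi_1+\xi_2)|$ as soon as all frequencies exceed $a$; this is exactly where $a=2\max(1,|2\nu/3\mu|)$ and the projections $P^a,P_a$ come in. On the complementary region, where some frequency is $\lesssim a$, the interaction is non-resonant or genuinely low-frequency and is controlled using the modulation gain and the $L^2$--Strichartz bounds of Lemmas 2.1--2.4, losing at most a fixed constant, which is absorbed into $\delta^{1/2-}+N^{-3/2+}$.

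On the main region I would Littlewood--Paley decompose in all frequencies and split into two cases. If $\max(|\xi_1|,|\xi_2|)\lesssim N$, then $m\equiv1$ on $\xi,\xi_1,\xi_2$, the estimate collapses to the classical Bourgain-space bilinear estimate $\|\partial_x(uv)\|_{X_{0,b-1}}\lesssim\|u\|_{X_{0,b}}\|v\|_{X_{0,b}}$, which holds comfortably since $0>-3/4$; feeding its surplus modulation weight through Lemma 2.7 (iv) / Corollary 2.8 together with the cut-off $\psi(t/\delta)$ produces the factor $\delta^{1/2-}$.

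In the remaining case, $\max(|\xi_1|,|\xi_2|)\gtrsim N$, the loss $\bigl(\max(|\xi_1|,|\xi_2|)/N\bigr)^{3/2-}$ is genuinely present, so one spends a fractional power of the largest modulation — available from the resonance identity, of size $\gtrsim|\mu|\,|\xi_1\xi_2(\xi_1+\xi_2)|$ — and inserts the remaining factor into the sharp bilinear $L^2$ estimates of Lemma 2.5 and Corollary 2.6 for the operators $I^{1/2}$ and $I^s$, whose symbols $|\phi'(\xi_1)-\phi'(\xi_2)|^{s}$ and the variant in (\ref{Is1}) record precisely the transversality of the two interacting characteristics. Choosing the exponents to leave a little room — the $\epsilon$'s in $b=1/2+$ and $1-b=1/2-$ supply it and also make the dyadic series in $|\xi|,|\xi_1|,|\xi_2|$ converge — the accumulated power of $\max(|\xi_1|,|\xi_2|)/N$ comes out negative and yields $N^{-3/2+}$. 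Summing the two cases gives the factor $\delta^{1/2-}+N^{-3/2+}$, and since $\tilde u,\tilde v$ realize the restriction norms, $\|U\|_{X_{0,b}}=\|Iu\|_{X_{0,b}^\delta}$ and $\|V\|_{X_{0,b}}=\|Iv\|_{X_{0,b}^\delta}$, which is the right-hand side of (\ref{Thm}).

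The step I expect to be the main obstacle is the high$\,\times\,$high$\,\to\,$low resonant interaction in the second case: $|\xi_1|\sim|\xi_2|\sim M\gg N$ with $|\xi|\ll M$. There the multiplier loss $(M/N)^{3/2-}$ is at its worst while the resonance supplies only a modulation of size $\gtrsim M^2|\xi|$, so the two must be balanced essentially optimally; this is the interaction responsible for the threshold $s>-3/4$ and the source of the exponent $3/2$ in $N^{-3/2+}$. Carrying it out requires the refined estimate of Lemma 2.5 (the $|\phi'(\xi_1)-\phi'(\xi_2)|^{1/2}$ gain, together with $I^s$ for $0<s<1/2$) rather than a crude Hölder/$L^4$ bound, and one must check that the summation over the dyadic scale $M$ reproduces exactly $N^{-3/2+}$ rather than a weaker power.
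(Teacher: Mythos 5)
Your proposal follows essentially the same route as the paper: dualize, decompose the frequency interactions relative to $N$, extract $\delta^{\frac{1}{2}-}$ from the time cutoff via Lemma 2.7(iv)/Corollary 2.8 in the regimes where the multiplier weight is harmless, and in the high-frequency regimes combine the modulation trichotomy coming from the resonance identity with the bilinear estimates of Lemma 2.5, Corollary 2.6 and the variant (\ref{Is1}) to produce $N^{-\frac{3}{2}+}$, with the high$\times$high$\to$low interaction correctly identified as the case that forces $s>-\frac{3}{4}$. The only caution is your claim that the region where some frequency is $\lesssim a$ ``loses at most a fixed constant absorbed into $\delta^{\frac{1}{2}-}+N^{-\frac{3}{2}+}$'': a fixed constant cannot be absorbed into a small factor, and when $|\xi|\lesssim a$ but $|\xi_1|\sim|\xi_2|\gg N$ the weight $m(\xi)/(m(\xi_1)m(\xi_2))$ is large, so that sub-case must be run through the same high-frequency machinery (as in the paper's Part 2) --- a phrasing slip rather than a genuine gap, since your main split on $\max(|\xi_1|,|\xi_2|)$ versus $N$ already covers it.
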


\noindent {\it Remark.} As we described in Section 1, the global
well-posedness result shall be effected by the estimate
(\ref{delta}) on the lifetime. By the equivalence (\ref{II}), we
have from Proposition 3.1 that $\delta^{\frac{1}{2}-}\sim
 \|Iu_0\|_{L^2}^{-1}$. On the other
hand, a similar local well-posedness result also can be achieved by a
standard process. In fact, we can obtain the bilinear estimates
(better than what obtained in \cite{KPV1}) that
$$
\|\partial_x(uv)\|_{X_{s,b-1}}\lesssim \|u\|_{X_{s,b'}}
\|u\|_{X_{s,b'}}
$$
for any $s\in (-\dfrac{3}{4},0), b\in
(\dfrac{1}{2},\dfrac{3}{4}+\dfrac{1}{3}s], b'\in (\dfrac{1}{2},1]$
(we omit the proofs here). Then by a general result (see
\cite{CKSTT3}), we have
$$
\|\partial_xI(uv)\|_{X_{0,b-1}}\lesssim \|Iu\|_{X_{0,b'}}
\|Iu\|_{X_{0,b'}}
$$
under the same assumptions. Thus we can establish the local
well-posedness similar to Proposition 3.1 but replacing the lifetime
estimate by
$$
\delta^{b-b'}\sim\|Iu_0\|_{L^2}^{-1}.
$$
However, since $b-b'<\dfrac{1}{4}+\dfrac{1}{3}s$, it is much weaker than
(\ref{delta}).

Now we turn to an arithmetic fact which is often used below, before
the proof of the main results of this section. We note that
\begin{equation}
|(\tau-\phi(\xi))-(\tau_1-\phi(\xi_1))-(\tau_2-\phi(\xi_2))|
\gtrsim  |\xi||\xi_1||\xi_2|,\label{claim}
\end{equation}
where $\xi=\xi_1+\xi_2$ and $\max\{|\xi|,|\xi_1|,|\xi_2|\}\geq a$.
In fact, we may assume that $|\xi_1|\geq |\xi_2|$ by symmetry, then
$\xi\cdot\xi_1\geq 0$. We only consider the case:
$\xi,\xi_1\geq 0, \xi_2\leq 0$
(the other three cases can be verified similarly), then
\beqs
\begin{split}
 & \ (\tau-\phi(\xi))-(\tau_1-\phi(\xi_1))-(\tau_2-\phi(\xi_2))
 = \phi(\xi_1)+\phi(\xi_2)-\phi(\xi)\\
 = \ & -\nu(\xi_1^2-\xi_2^2-\xi^2)-3\mu\xi\xi_1\xi_2
 = \xi\xi_2(2\nu-3\mu\xi_1),
\end{split}
\eeqs
since $|\xi_1|\geq a$, we have (\ref{claim}). According to (\ref{claim}),
one of the following three cases always occurs:
\begin{equation}\label{abc}
(a)\,|\tau-\phi(\xi)|\gtrsim |\xi||\xi_1||\xi_2|;\ \
(b)\,|\tau_1-\phi(\xi_1)|\gtrsim |\xi||\xi_1||\xi_2|;\ \
(c)\,|\tau_2-\phi(\xi_2)|\gtrsim |\xi||\xi_1||\xi_2|.
\end{equation}

\noindent{\it Proof of Proposition 3.2.} By duality and Plancherel's
identity, it suffices to show that \beq
 \displaystyle\int\!\!\!\!\int\xi
 m(\xi)\widehat{h_\delta}(\xi,\tau)\widehat{\tilde{u}\tilde{v}}(\xi,\tau)\,d\xi
 d\tau \lesssim
 K\|h\|_{X_{0,1-b}}\>
 \|I\tilde{u}\|_{X_{0,b}}\>
 \|I\tilde{v}\|_{X_{0,b}}
 \equiv K \cdot  RHS
 \label{Thm1}
\eeq for any $h\in X_{0,1-b}$. Here we denote
$h_\delta(x,t)=\psi(t/\delta) h(x,t)$ and
$K=\delta^{\frac{1}{2}-}+N^{-\frac{3}{2}+}$ for short.

We write
$$
\hat{f}(\xi,\tau) =\widehat{I\tilde{u}}(\xi,\tau)
=m(\xi)\widehat{\tilde{u}}(\xi,\tau),\quad
\hat{g}(\xi,\tau) =\widehat{I\tilde{v}}(\xi,\tau)
=m(\xi)\widehat{\tilde{v}}(\xi,\tau),
$$
then (\ref{Thm1}) is
changed into
\begin{eqnarray}
&
LHS &\equiv
     \displaystyle\int_{\ast}|\xi|\dfrac{m(\xi)}{m(\xi_1)m(\xi_2)}
     \widehat{h_\delta}(\xi,\tau)\>
     \hat{f}(\xi_1,\tau_1)\>
     \hat{g}(\xi_2,\tau_2)\nonumber\\
    &&\lesssim
     K\|h\|_{X_{0,1-b}}\>
     \|f\|_{X_{0,b}}\>
     \|g\|_{X_{0,b}}
     =K\cdot RHS,
     \label{3.5}
\end{eqnarray}
where $\displaystyle\int_\ast
=\int_{\stackrel{\xi_1+\xi_2=\xi,}{ \tau_1+\tau_2=\tau}}\,d\xi_1
d\xi_2 d\tau_1 d\tau_2$, which is corresponding to convolution.
%%$\displaystyle\int_{\ast}=\displaystyle\int_{\stackrel{\xi_1+\xi_2=\xi,}{
%%\tau_1+\tau_2=\tau}}$.

Without loss of generality, we may assume further
$\widehat{h_\delta},\hat{f},\hat{g}\in
L^2(\R^2)$ are nonnegative functions. By symmetry, we may consider
only the integration over the region of $|\xi_1|\geq|\xi_2|$ (so,
$\xi\cdot\xi_1\geq 0$ and $|\xi|\leq 2|\xi_1|$), and divide it into
the following different parts.
$$
\begin{array}{ll}
\mbox{Part 1. } |\xi|,|\xi_1|,|\xi_2|\lesssim N;&
\mbox{Part 2. } |\xi|\lesssim N, |\xi_1|,|\xi_2|\gg N;\\
\mbox{Part 3. } |\xi_2|\lesssim N, |\xi|,|\xi_1|\gg N;\qquad\qquad&
\mbox{Part 4. } |\xi|,|\xi_1|,|\xi_2|\gg N.
\end{array}
$$

\noindent {\bf Part 1}. $|\xi|,|\xi_1|,|\xi_2|\lesssim N$.

In this part, $m(\xi),m(\xi_1),m(\xi_2)\in [C^s,1]$ for some
constant $C$, therefore
$$
 LHS\sim
 \displaystyle\int_{\ast}
|\xi| \>\widehat{h_\delta}(\xi,\tau) \>\hat{f}(\xi_1,\tau_1)
\>\hat{g}(\xi_2,\tau_2) .
$$
Here and below $LHS$ denotes the integral in the \emph{left hand
side} of (\ref{3.5}) over the corresponding part of the integration
region.

{\bf Subpart (I)}. $|\xi|\lesssim a$.\quad By Plancherel's identity,
H\"{o}lder's inequality, (\ref{XE3}), (\ref{cor}) and Lemma 2.7(iv), we have
\beqs\begin{split} LHS
    &\lesssim
      \displaystyle\int_{\ast}
      \widehat{h_\delta}(\xi,\tau) \>\hat{f}(\xi_1,\tau_1)
      \>\hat{g}(\xi_2,\tau_2)\\
    &=\displaystyle\int
      h_\delta(x,t)\> f(x,t)\> g(x,t)\,dxdt\\
    &\leq
      \|h_\delta\|_{L^2_{xt}}\>
      \|f\|_{L^{4}_{xt}}\>
      \|g\|_{L^4_{xt}}\\
   &\lesssim
      \|h_\delta\|_{X_{0,0}}\>
      \|I\tilde{u}\|_{X_{0,\frac{1}{3}+}}\>
      \|I\tilde{v}\|_{X_{0,\frac{1}{3}+}}\\
    &\lesssim \delta^{\frac{5}{6}-} RHS.
\end{split}
\eeqs

{\bf Subpart (II)}. $|\xi_1|\gg |\xi_2|$ and $|\xi|\gg a$.\quad In
this subpart we have $|\xi|\sim |\xi_1|$, and
$$
|\phi'(\xi_1)-\phi'(\xi_2)|\sim |\xi|^2.
$$
Therefore, by Plancherel's identity,
H\"{o}lder's inequality, (\ref{IE}) and Lemma 2.7(iv), we have
\beqs
\begin{split}
LHS &\lesssim
      \displaystyle\int
      \widehat{h_\delta}(\xi,\tau)\>
      \widehat{I^{\frac{1}{2}}(f,g)}(\xi,\tau)\,d \xi d \tau\\
    &\leq
      \|h_\delta\|_{L^2_{xt}}\>
      \left\|I^{\frac{1}{2}}(f,g)\right\|_{L^2_{xt}}\\
    &\lesssim
      \|h_\delta\|_{X_{0,0}}\>\|f\|_{X_{0,b}}\>\|g\|_{X_{0,b}}\\
    &\lesssim \delta^{\frac{1}{2}-} RHS,
\end{split}
\eeqs

{\bf Subpart (III)}. $|\xi_1|\sim |\xi_2|$ and $|\xi|\gg a$.
We split the integration into three cases according to (\ref{abc}).

(a) $|\tau-\phi(\xi)|\gtrsim |\xi||\xi_1||\xi_2|\gtrsim |\xi|^3$ (since
$|\xi|\lesssim |\xi_1|$). Then by
(\ref{XE3}), Lemma 2.7(iv) and (\ref{cor}),
\beqs
\begin{split}
 LHS &\lesssim
       \displaystyle\int_{\ast}
       \langle\tau-\phi(\xi)\rangle^{\frac{1}{3}}
       \widehat{h_\delta}(\xi,\tau) \>\hat{f}(\xi_1,\tau_1)
       \>\hat{g}(\xi_2,\tau_2)\\
     &\leq
       \|h_\delta\|_{X_{0,\frac{1}{3}}}\>
       \|f\|_{L^4_{xt}}\>
       \|g\|_{L^4_{xt}}\\
     &\lesssim
       \|h_\delta\|_{X_{0,\frac{1}{3}}}\>
       \|f\|_{X_{0,\frac{1}{3}+}}\>
       \|g\|_{X_{0,\frac{1}{3}+}}\\
     &\lesssim \delta^{\frac{1}{2}-} RHS.
\end{split}
\eeqs

For cases (b) and (c), the estimation is similar to (a), and we omit the details.

\noindent {\bf Part 2}. $|\xi|\lesssim N$, $|\xi_1|,|\xi_2|\gg N$.

In this part, $|\xi_1|\sim|\xi_2|\gg |\xi|$. By (\ref{m}),
$$
LHS\lesssim N^{2s} \displaystyle\int_{\ast}
|\xi||\xi_2|^{-2s}\widehat{h_\delta}(\xi,\tau) \>\hat{f}(\xi_1,\tau_1)
\>\hat{g}(\xi_2,\tau_2).\\
$$
We split the integration into three cases according to (\ref{abc}).

(a)  $|\tau-\phi(\xi)|\gtrsim |\xi||\xi_1||\xi_2|$. Since
$$
|\phi'(\xi_1)-\phi'(\xi_2)|\sim |\xi||\xi_1|,
$$
then by the definition of (\ref{Is}) and note that $s>-\dfrac{3}{4}$, we have
\begin{eqnarray}
 &LHS &\lesssim
       N^{2s}\displaystyle\int_{\ast}
       |\xi|^b|\xi_2|^{-2s+2b-2}\>\langle\tau-\phi(\xi)\rangle^{1-b}
       \widehat{h_\delta}(\xi,\tau) \>\hat{f}(\xi_1,\tau_1)
       \>\hat{g}(\xi_2,\tau_2)\nonumber\\
     &&\lesssim
       N^{2s}\displaystyle\int_{\ast}
       |\xi|^{b-\frac{1}{2}}|\xi_2|^{-2s+2b-\frac{5}{2}}
       \>\langle\tau-\phi(\xi)\rangle^{1-b}
       \widehat{h_\delta}(\xi,\tau)
       \>|\xi|^{\frac{1}{2}}|\xi_1|^{\frac{1}{2}}\hat{f}(\xi_1,\tau_1)
       \>\hat{g}(\xi_2,\tau_2)\nonumber\\
     &&\lesssim
       N^{3b-3}\displaystyle\int
       \>\langle\tau-\phi(\xi)\rangle^{1-b}
       \widehat{h_\delta}(\xi,\tau)\>
       \widehat{I^{\frac{1}{2}}(f,g)}(\xi,\tau)\,d \xi d \tau,\label{3.8}
\end{eqnarray}
since $-2s+2b-\dfrac{5}{2}\leq 0$ by choosing
$b-\dfrac{1}{2}$ small enough. Therefore, by (\ref{IE}) and Lemma 2.7 (iv),
(\ref{3.8}) is controlled by
$$
       N^{-\frac{3}{2}+}
       \|h_\delta\|_{X_{0,1-b}}
       \left\|I^{\frac{1}{2}}(f,g)\right\|_{L^2_{xt}}\\
       \lesssim
       N^{-\frac{3}{2}+}RHS.
$$

(b)  $|\tau_1-\phi(\xi_1)|\gtrsim |\xi||\xi_1||\xi_2|$. Since
$$
|\phi'(\xi)-\phi'(\xi_2)|\sim |\xi_2|^2
$$
in this situation, then by the definition of (\ref{Is1}), we have
\begin{eqnarray}
 &LHS &\lesssim
       N^{2s}\displaystyle\int_{\ast}
       |\xi|^{1-b}|\xi_2|^{-2s-2b}\>
       \widehat{h_\delta}(\xi,\tau)
       \>\langle\tau_1-\phi(\xi_1)\rangle^{b}\hat{f}(\xi_1,\tau_1)
       \>\hat{g}(\xi_2,\tau_2)\nonumber\\
     &&\lesssim
       N^{2s}\displaystyle\int_{\ast}
       |\xi|^{1-b}|\xi_2|^{-2s-2b-2\theta}
       \langle\tau_1-\phi(\xi_1)\rangle^{b}\hat{f}(\xi_1,\tau_1)
       \>
       |\xi_2|^{2\theta}\widehat{h_\delta}(\xi,\tau)\hat{g}(\xi_2,\tau_2),
       \label{3.9}
\end{eqnarray}
where $\theta=(\dfrac{5}{4}-\dfrac{3}{2}b)-$ such that
$1-b>\dfrac{1}{6}+\dfrac{3}{2}\theta$ and $-s-b-\theta<0$. Therefore,
by (\ref{IE1}) in the version of (\ref{Is1}) and Lemma 2.7 (iv),
(\ref{3.9}) is controlled by
\beqs
\begin{split}
     &N^{1-3b-2\theta}\displaystyle\int
       \langle\tau_1-\phi(\xi_1)\rangle^{b}\hat{f}(\xi_1,\tau_1)\>
       \widehat{I^{\theta}(h_\delta,g)}(\xi_1,\tau_1)
       \,d \xi_1 d \tau_1\\
     \lesssim &
       N^{-\frac{3}{2}+}
       \|f\|_{X_{0,b}}\>
       \left\|I^{\theta}(h_\delta,g)\right\|_{L^2_{xt}}\\
     \lesssim &
       N^{-\frac{3}{2}+}
       \|f\|_{X_{0,b}}\>
       \|h_\delta\|_{X_{0,1-b}}\>
       \|g\|_{X_{0,b}}\\
     \lesssim &
       N^{-\frac{3}{2}+}RHS,
\end{split}
\eeqs
where we note that $1-3b-2\theta=-\dfrac{3}{2}+$.

The part (c) is similar to (b), and the details are omitted.

\noindent {\bf Part 3}. $|\xi_2|\lesssim N$, $|\xi|,|\xi_1|\gg N$.

In this part,
$$
 LHS\sim
 \displaystyle\int_{\ast}|\xi|\>
 \widehat{h_\delta}(\xi,\tau)\>\hat{f}(\xi_1,\tau_1)
 \>\hat{g}(\xi_2,\tau_2).
$$
The estimation in this part is the same as Part 1 (II), and $LHS$ can
be controlled by $\delta^{\frac{1}{2}-} RHS.$

\noindent {\bf Part 4}. $|\xi|,|\xi_1|,|\xi_2|\gg N$.

In this part,
$$
LHS\lesssim N^{s} \displaystyle\int_{\ast}
|\xi|^{1+s}|\xi_1|^{-s}|\xi_2|^{-s}\widehat{h_\delta}(\xi,\tau)
\>\hat{f}(\xi_1,\tau_1)
\>\hat{g}(\xi_2,\tau_2).\\
$$
First, we divide the integral into two subparts, then in each subpart below
we split it again into three subsubparts by (\ref{abc}).

 {\bf Subpart (I)}. $|\xi|\ll|\xi_1|$, then $|\xi_1|\sim|\xi_2|$.

(a)  $|\tau-\phi(\xi)|\gtrsim |\xi||\xi_1||\xi_2|\sim |\xi||\xi_2|^2$. Since
$$
|\phi'(\xi_1)-\phi'(\xi_2)|\sim |\xi||\xi_1|,
$$
then by (\ref{IE}) and Lemma 2.7 (iv), we have
\begin{eqnarray*}
 &LHS &\lesssim
       N^{s}\displaystyle\int_{\ast}
       |\xi|^{s+b}|\xi_2|^{-2s+2b-2}\>\langle\tau-\phi(\xi)\rangle^{1-b}
       \widehat{h_\delta}(\xi,\tau) \>\hat{f}(\xi_1,\tau_1)
       \>\hat{g}(\xi_2,\tau_2)\nonumber\\
     &&\leq
       N^{s}\displaystyle\int_{\ast}
       |\xi|^{s+b-\frac{1}{2}}|\xi_2|^{-2s+2b-\frac{5}{2}}
       \>\langle\tau-\phi(\xi)\rangle^{1-b}
       \widehat{h_\delta}(\xi,\tau)
       \>|\xi|^{\frac{1}{2}}|\xi_1|^{\frac{1}{2}}\hat{f}(\xi_1,\tau_1)
       \>\hat{g}(\xi_2,\tau_2)\\
     &&\lesssim
       N^{3b-3}\displaystyle\int
       \>\langle\tau-\phi(\xi)\rangle^{1-b}
       \widehat{h_\delta}(\xi,\tau)\>
       \widehat{I^{\frac{1}{2}}(f,g)}(\xi,\tau)\,d \xi d \tau\\
     &&\lesssim
       N^{-\frac{3}{2}+}
       \|h_\delta\|_{X_{0,1-b}}
      \left\|I^{\frac{1}{2}}(f,g)\right\|_{L^2_{xt}}\\
     &&\lesssim
       N^{-\frac{3}{2}+}RHS,
\end{eqnarray*}
since $s+b-\dfrac{1}{2}\leq 0$ and $-2s+2b-\dfrac{5}{2}\leq 0$ in the third step.

(b)  $|\tau_1-\phi(\xi_1)|\gtrsim |\xi||\xi_1||\xi_2|\sim |\xi||\xi_2|^2$. Since
$$
|\phi'(\xi)-\phi'(\xi_2)|\sim |\xi_2|^2
$$
in this situation, then by (\ref{IE1}) in the version of (\ref{Is1})
and Lemma 2.7 (iv), we have
\begin{eqnarray*}
 &LHS &\lesssim
       N^{s}\displaystyle\int_{\ast}
       |\xi|^{1+s-b}|\xi_2|^{-2s-2b}\>
       \widehat{h_\delta}(\xi,\tau)
       \>\langle\tau_1-\phi(\xi_1)\rangle^{b}\hat{f}(\xi_1,\tau_1)
       \>\hat{g}(\xi_2,\tau_2)\\
     &&\lesssim
       N^{1-3b-2\theta}\displaystyle\int
       \langle\tau_1-\phi(\xi_1)\rangle^{b}\hat{f}(\xi_1,\tau_1)\>
       \widehat{I^{\theta}(h_\delta,g)}(\xi_1,\tau_1)
       \,d \xi_1 d \tau_1\\
     &&\lesssim
       N^{-\frac{3}{2}+}
       \|f\|_{X_{0,b}}\>
       \left\|I^{\theta}(h_\delta,g)\right\|_{L^2_{xt}}\\
     &&\lesssim
       N^{-\frac{3}{2}+}
       \|f\|_{X_{0,b}}\>
       \|h_\delta\|_{X_{0,1-b}}\>
       \|g\|_{X_{0,b}}\\
     &&\lesssim
       N^{-\frac{3}{2}+}RHS.
\end{eqnarray*}

The part (c) is got in the same way as (b), so we omit the details
again.

{\bf Subpart (II)}. $|\xi|\sim|\xi_1|$.

(a) $|\tau-\phi(\xi)|\gtrsim |\xi||\xi_1||\xi_2|\sim |\xi|^2|\xi_2|$.
By Lemma 2.3 we have
\beqs
\begin{split}
 LHS &\lesssim N^{s}
       \displaystyle\int_{\ast}
       |\xi|^{2b-1}|\xi_2|^{-s+b-1}
       \>\langle\tau-\phi(\xi)\rangle^{1-b}
       \widehat{h_\delta}(\xi,\tau) \>\hat{f}(\xi_1,\tau_1)
       \>\hat{g}(\xi_2,\tau_2)\\
     &\lesssim N^{s}
       \displaystyle\int_{\ast}
       |\xi|^{2b-\frac{9}{8}}|\xi_2|^{-s+b-\frac{9}{8}}
       \>\langle\tau-\phi(\xi)\rangle^{1-b}
       \widehat{h_\delta}(\xi,\tau)
       \>\widehat{D_x^{\frac{1}{8}}P^af}(\xi_1,\tau_1)
       \>\widehat{D_x^{\frac{1}{8}}P^ag}(\xi_2,\tau_2)\\
     &\lesssim N^{3b-\frac{9}{4}}
       \|h_\delta\|_{X_{0,1-b}}
       \left\|D_x^{\frac{1}{8}}P^af\right\|_{L^4_{xt}}\>
       \left\|D_x^{\frac{1}{8}}P^ag\right\|_{L^4_{xt}}\\
     &\lesssim N^{3b-\frac{9}{4}}
       \|h_\delta\|_{X_{0,1-b}}\>
       \|f\|_{X_{0,\frac{3}{8}+}}\>
       \|g\|_{X_{0,\frac{3}{8}+}}\\
     &\lesssim N^{-\frac{3}{4}+}\delta^{\frac{1}{4}-} RHS,
\end{split}
\eeqs
since $|\xi_2|\lesssim |\xi|$ and $3b-s-\dfrac{9}{4}\leq 0$ in the third step.

(b) $|\tau_1-\phi(\xi_1)|\gtrsim |\xi||\xi_1||\xi_2|\sim |\xi|^2|\xi_2|$.
By Lemma 2.3 we have
\beqs
\begin{split}
 LHS &\lesssim N^{s}
       \displaystyle\int_{\ast}
       |\xi|^{1-2b}|\xi_2|^{-s-b}
       \>\widehat{h_\delta}(\xi,\tau)
       \>\langle\tau_1-\phi(\xi_1)\rangle^{b}\hat{f}(\xi_1,\tau_1)
       \>\hat{g}(\xi_2,\tau_2)\\
     &\lesssim N^{s}
       \displaystyle\int_{\ast}
       |\xi|^{\frac{7}{8}-2b}|\xi_2|^{-s-b-\frac{1}{8}}
       \>\widehat{D_x^{\frac{1}{8}}P^ah_\delta}(\xi,\tau)
       \>\langle\tau_1-\phi(\xi_1)\rangle^{b}\widehat{f}(\xi_1,\tau_1)
       \>\widehat{D_x^{\frac{1}{8}}P^ag}(\xi_2,\tau_2)\\
     &\lesssim N^{-3b+\frac{3}{4}}
       \left\|D_x^{\frac{1}{8}}P^ah_\delta\right\|_{L^4_{xt}}
       \|f\|_{X_{0,b}}\>
       \left\|D_x^{\frac{1}{8}}P^ag\right\|_{L^4_{xt}}\\
     &\lesssim N^{-3b+\frac{3}{4}}
       \|h_\delta\|_{X_{0,\frac{3}{8}+}}\>
       \|f\|_{X_{0,b}}\>
       \|g\|_{X_{0,\frac{3}{8}+}}\\
     &\lesssim N^{-\frac{3}{4}-}\delta^{\frac{1}{4}-} RHS,
\end{split}
\eeqs

The part (c) is treated in a very similar manner as (a) and (b), so
we omit the details. This completes the proof of the proposition.
\hfill$\Box$

Now, we are ready to prove Proposition 3.1. For this
purpose, we define the operator $\Phi_\delta$ as
\beq
 \Phi_\delta u(t)
 =\psi(t)S(t)u_0+\psi(t)
 \displaystyle\int_0^tS(t-s)
 \psi(s/\delta)\partial_x\tilde{u}^2(s)
 \,ds,
 \label{Phi}
\eeq
where $\tilde{u}$ is the extension of
$u|_{t\in [-\delta,\delta]}$ such that $\|Iu\|_{X_{0,b}^\delta}=
\|I\tilde{u}\|_{X_{0,b}}$ ($I=I_{N,s}$). Then (\ref{kbo})-(\ref{1.2})
is locally well-posed if
$\Phi_\delta$ has a unique fixed point.

Acting the operator $I$ onto both sides of (\ref{Phi}),
taking the $X_{0,b}^\delta$-norm for $b=\dfrac{1}{2}+$,
and employing Lemma 2.7 and (\ref{Thm}) we have
\beqs
\begin{split}
  \|I(\Phi_\delta u)\|_{X_{0,b}^\delta}
&\lesssim
  \|\psi(t)S(t)Iu_0\|_{X_{0,b}^\delta}
  +\left\|\psi(t)\displaystyle\int_0^tS(t-s)
   \psi(s/\delta)\partial_xI(\tilde{u}^2(s))\,ds
   \right\|_{X_{0,b}^\delta}\\
&\lesssim
  \|Iu_0\|_{L^2}
  +\left\|\psi(t/\delta)\partial_xI(\tilde{u}^2)
  \right\|_{X_{0,b-1}}\\
&\leq
  C_1\|Iu_0\|_{L^2}+C_2(\delta^{\frac{1}{2}-}
  +N^{-\frac{3}{2}-})\|Iu\|_{X_{0,b}^\delta}^2.
\end{split}
\eeqs

Consider
$$
B_r=\{u:Iu\in X_{0,b}^\delta, \makebox{\,\,such that\,\,}
\|Iu\|_{X_{0,b}^\delta}\leq r\},
$$
where $r=2C_1\|Iu_0\|_{L^2}$, some small $\delta$ and large $N$ will be
decided later. Then $B_r$ is a complete metric space.
Observing that if we choose $N$, $\delta$ such that
\beq
 C_2(\delta^{\frac{1}{2}-}+N^{-\frac{3}{2}-})r\leq
 \dfrac{1}{2},\label{Nd1}
\eeq
then the operator $\Phi_\delta$ maps $B_r$ into itself.
(\ref{Nd1}) is valid if we choose $N,\delta$ such that
\beq
 100C_1C_2N^{-\frac{3}{2}-}\cdot \|Iu_0\|_{L^2}\leq 1
 \makebox{\quad and \quad } 100C_1C_2\delta^{\frac{1}{2}-}\leq
 \|Iu_0\|_{L^2}^{-1}.\label{Nd}
\eeq

Similarly, under the condition (\ref{Nd}), one has
$$
\|I(\Phi_\delta(u)-\Phi_\delta(v))\|_{X_{0,b}^\delta}\leq
\dfrac{1}{2}\|I(u-v)\|_{X_{0,b}^\delta},\quad \forall \ u,v\in B_r,
$$
and thus $\Phi_\delta$ is a contraction on
$B_r$. Thus by the fixed point
theorem, we complete the proof of Proposition 3.1.

 \vspace{0.5cm}
\section{The Global Well-posedness}

In this section, we consider the global well-posedness of
(\ref{kbo})-(\ref{1.2}) by adopting the argument in
\cite{CKSTT}, which is based on a
multilinear correction technique and iteration. Unfortunately, the
phase function $\phi(\xi)$ loses some symmetries which brings much
convenience in \cite{CKSTT} to obtain some pointwise estimates,
this makes many difficulties.
To overcome these difficulties, we use some multiplier decomposition argument
to deal with it. More precisely, we also introduce the third version
modified energy, but we only use it to cancel a part of ``correction term''
in the second modified energy. Similar argument were appeared
previously in \cite{BDGS}, \cite{Bourgain2}, \cite{CKSTT-08}.

\subsection{Modified Energies and I-method}

First we  observe some arithmetic facts. Recall that
$\phi(\xi)=-\nu\xi|\xi|+\mu\xi^3$, let
$$
\alpha_k\equiv i(\phi(\xi_1)+\cdots+\phi(\xi_k)).
$$
Then $\alpha_2=0$ for $\xi_1+\xi_2=0$. Similar to (\ref{claim}),
when $\xi_1+\xi_2+\xi_3=0$ with $\max\{|\xi_1|,|\xi_2|,|\xi_3|\}\geq a$,
then
\beq
 |\alpha_3|\gtrsim|\xi_1||\xi_2||\xi_3|.
 \label{fact2}
\eeq

Next, we state the definitions of the modified energies
and adopt the notations in \cite{CKSTT}.

In this
section, let $u$ be the real-valued solution of (\ref{kbo})-(\ref{1.2}).
For a given function $m(\xi_1,\cdots,\xi_k)$ defined on the hyperplane
$$
\Gamma_k = \{(\xi_1,\cdots,\xi_k):\xi_1+\cdots+\xi_k=0\},
$$
we define
$$
  \Lambda_k(m)
 =\displaystyle\int_{\Gamma_k}m(\xi_1,\cdots,\xi_k)
   \prod_{j=1}^k\F_xu(\xi_j,t)\,d\xi_1\cdots d\xi_{k-1}.
$$
Denote the modified energy as
$$
E^2_I(t)\equiv\|Iu(t)\|_{L^2}^2=\Lambda_2(m(\xi_1)m(\xi_2)),
$$
then by the arithmetic fact above, (\ref{kbo}) and a direct computation
(cf. \cite{CKSTT}), one has
$$
\dfrac{d}{dt}E^2_I(t)
=\Lambda_2(m(\xi_1)m(\xi_2)\alpha_2)+\Lambda_3(M_3)
=\Lambda_3(M_3),
$$
where
\begin{equation}
  M_3(\xi_1,\xi_2,\xi_3)
 =-\frac{2i}{3}\big(m^2(\xi_1)\xi_1+m^2(\xi_2)\xi_2+m^2(\xi_3)\xi_3\big).\label{M3}
\end{equation}
Define the second modified energy $E^3_I(t)$ by
\beq
E^3_I(t)=\Lambda_3(\sigma_3)+E^2_I(t),
\label{E3}
\eeq
where
$$
  \sigma_3(\xi_1,\xi_2,\xi_3)
 =-M_3(\xi_1,\xi_2,\xi_3)\big/\alpha_3(\xi_1,\xi_2,\xi_3),
$$
then one has
\beq
\dfrac{d}{dt}E^3_I(t)=\Lambda_4(M_4).\label{E3'}
\eeq
where
\begin{equation}\label{M4}
M_4(\xi_1,\cdots,\xi_4)
 =-\dfrac{3}{4}i[\sigma_3(\xi_1,\xi_2,\xi_3+\xi_4)(\xi_3+\xi_4)]_{sym}.
\end{equation}

We denote the sets that
$$
  \Omega = \{(\xi_1,\xi_2,\xi_3,\xi_4)\in \Gamma_4:
  |\xi_1|, \cdots, |\xi_4|\gtrsim N\},
$$
and rewrite (\ref{E3'}) by
\beq
\dfrac{d}{dt}E^3_I(t)=\Lambda_4(\bar{M}_4)+\Lambda_4(\tilde{M}_4),\label{E3''}
\eeq
where
\begin{equation}\label{M4-1}
    \bar{M}_4=(\chi_{\Gamma_4}-\chi_\Omega)M_4;\quad
    \tilde{M}_4=\chi_\Omega M_4.
\end{equation}
Now, we define the third modified energy $E^4_I(t)$ as
\beq
E^4_I(t)=\Lambda_4(\sigma_4)+E^3_I(t),
\label{E4}
\eeq
where
$$
  \sigma_4(\xi_1,\cdots,\xi_4)
 =-\frac{\tilde{M}_4(\xi_1,\cdots,\xi_4)}{\alpha_4(\xi_1,\cdots,\xi_4)}.\nonumber\\
$$
Then one has
\beq
\dfrac{d}{dt}E^4_I(t)=\Lambda_4(\bar{M}_4)+\Lambda_5(M_5).\label{E4'}
\eeq
where
\begin{equation}\label{M5}
      M_5(\xi_1,\cdots,\xi_5)
 =-\dfrac{4}{5}i[\sigma_4(\xi_1,\xi_2,\xi_3,\xi_4+\xi_5)(\xi_4+\xi_5)]_{sym}.
\end{equation}

\noindent {\it Remark.} The second version of modified energy is not
enough to obtain the claim result in Theorem 1.1 for $s>\dfrac{3}{4}$.
Indeed, the best estimate (in our opinion) on almost conserved quantity is
$$
 \sup\limits_{t\in [0,\delta]}\|Iu(t)\|_{L^2}^2
 \lesssim
 \|Iu_0\|_{L^2}^2
 +(N^{-\frac{3}{2}+}\delta^{\frac{1}{2}-}+N^{-3+})
  \|Iu_0\|_{L^2}^4,
$$
which implies that (\ref{kbo})-(\ref{1.2}) is globally well-posed in
$H^s(\R)$ when $s>-1/2$. So one may need to introduce the third
modified energy. For this purpose, as the general argument, one may
defined $\sigma_4 =-\dfrac{{M}_4}{\alpha_4}$, and give the definition
of $M_5$ as (\ref{M5}). Unfortunately, it is much hard to give the
pointwise estimates on this $M_5$, especially because of the complexity of
the phase function $\phi(\xi)$. We note that the most hand case appears
when the third and the fourth highest values of $|\xi_1|,\cdots, |\xi_4|$:
$|\xi_3^*|,|\xi_4^*|\ll N$,
but on the other hand, this case behaves well in the estimation of the increment
of $E^3_I(t)$. This is the reason that we
divided the multiplier $M_4$ into two parts and use the multiplier decomposition
argument to deal with it.
The argument brings much convenient for us in this paper.

\subsection{Pointwise Multiplier Bounds}

By mean value theorem, one has an estimate on $M_3$:
\begin{lem}
Let $|\xi_{min}|=\min\{|\xi_1|,|\xi_2|,|\xi_3|\}, \xi_1+\xi_2+\xi_3=0 $,
then,
\beq
|M_3(\xi_1,\xi_2,\xi_3)|\lesssim
m^2(\xi_{min})|\xi_{min}|,\label{EM3}
\eeq

\end{lem}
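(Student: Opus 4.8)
The plan is to prove the pointwise bound \eqref{EM3} on $M_3$ directly from its explicit formula \eqref{M3}, exploiting the constraint $\xi_1+\xi_2+\xi_3=0$ together with the monotonicity of $m$. Write $M_3 = -\frac{2i}{3}(m^2(\xi_1)\xi_1 + m^2(\xi_2)\xi_2 + m^2(\xi_3)\xi_3)$. The key observation is that on $\Gamma_3$ the quantity $\xi_1+\xi_2+\xi_3$ vanishes identically, so $M_3$ is, up to a constant, the finite difference of the odd function $g(\xi)=m^2(\xi)\xi$ evaluated at the three frequencies. By symmetry I may relabel so that $|\xi_1|\le|\xi_2|\le|\xi_3|$, hence $\xi_{min}=\xi_1$ and $\xi_3 = -(\xi_1+\xi_2)$. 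The idea is to isolate the $\xi_1$-term and rewrite the remaining two terms as $g(\xi_2) - g(\xi_2+\xi_1)$, which is amenable to the mean value theorem.

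First I would treat the trivial range $|\xi_1|\lesssim a$ (equivalently $|\xi_{min}|$ bounded): here $m$ is bounded by $1$, and since $|\xi_3|\le|\xi_1|+|\xi_2|\le 2|\xi_2|$ while $g$ is odd, one checks that the two large terms $m^2(\xi_2)\xi_2 + m^2(\xi_3)\xi_3$ nearly cancel. More efficiently, I would handle all cases uniformly: set $g(\xi)=m^2(\xi)\xi$ and note $g$ is smooth, odd, monotone increasing (because $m$ is monotone and $0<m\le 1$), with $|g'(\xi)|\lesssim m^2(\xi)$ — this last bound is the crucial technical point, coming from $g'(\xi) = m^2(\xi) + 2m(\xi)m'(\xi)\xi$ and the estimate $|m'(\xi)\xi|\lesssim m(\xi)$, which follows from the defining properties of $m$ in \eqref{m} (on $|\xi|\le N$, $m'=0$; on $|\xi|>2N$, $m(\xi)=N^{-s}|\xi|^s$ so $m'(\xi)\xi = s\,m(\xi)$; in the transition region $N<|\xi|\le 2N$, smoothness and monotonicity give $|m'(\xi)|\lesssim N^{-1}\sim|\xi|^{-1}$ and $m(\xi)\sim 1$). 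Then by the mean value theorem,
$$
|M_3| \lesssim |g(\xi_1)| + |g(\xi_2) - g(\xi_2+\xi_1)| \lesssim m^2(\xi_1)|\xi_1| + |\xi_1|\sup_{|\zeta|\le|\xi_3|} |g'(\zeta)| \lesssim m^2(\xi_1)|\xi_1| + m^2(\xi_{min})|\xi_1|,
$$
where in the last step I use that $m$ is monotone and $|\xi_1|=|\xi_{min}|$ is the smallest, so I can further bound $\sup_{|\zeta|\le|\xi_3|}|g'(\zeta)|$ only after splitting — see below.

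The one subtlety is that $\sup_{|\zeta|\le|\xi_3|}|g'(\zeta)| \lesssim m^2(\xi_3) \le m^2(\xi_{min})$ is false in general since $m$ is decreasing; rather $m^2(\xi_3)\le m^2(\xi_{min})$ is automatic but $g'$ at intermediate $\zeta$ could be as large as $\sup_\zeta m^2(\zeta) = 1$. So I would instead split into two regimes. If $|\xi_{min}| = |\xi_1| \gtrsim N$, then all three frequencies are $\gtrsim N$, $m^2(\xi_{min})\sim N^{-2s}|\xi_{min}|^{2s}$, and on the segment from $\xi_2$ to $\xi_2+\xi_1$ (all of size $\sim|\xi_2|\sim|\xi_3|\gtrsim|\xi_1|$) we have $|g'|\lesssim m^2(\xi_2)\lesssim m^2(\xi_{min})$ by monotonicity since $|\xi_2|\ge|\xi_1|$; combined with $|g(\xi_1)|=m^2(\xi_1)|\xi_1|$ this gives \eqref{EM3}. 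If instead $|\xi_{min}|\lesssim N$, then $m^2(\xi_{min})\sim 1$, and I must show $|M_3|\lesssim|\xi_{min}|$; here I use the cancellation structure: $|M_3| = \frac{2}{3}|g(\xi_1)+g(\xi_2)+g(\xi_3)|$ with $\xi_3=-(\xi_1+\xi_2)$, so $|M_3|\lesssim |g(\xi_1)| + |g(\xi_2)-g(\xi_1+\xi_2)| \lesssim |\xi_1| + |\xi_1|\sup|g'| \lesssim |\xi_{min}|$ since $|g'|\lesssim 1$ everywhere. This closes the estimate in all cases. The main obstacle, and the only place requiring care, is establishing the uniform bound $|g'(\xi)| = |(m^2(\xi)\xi)'| \lesssim m^2(\xi)$ across the transition region $N<|\xi|<2N$ where $m$ is only known to be smooth and monotone; I expect this to follow from a standard argument that a smooth monotone interpolation between the constant $1$ and $N^{-s}|\xi|^s$ necessarily has derivative $O(1/N)$ there, but it is the step I would write out most carefully.
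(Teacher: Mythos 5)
Your proof is correct and is essentially the argument the paper intends: the paper's entire proof of this lemma is the phrase ``by mean value theorem,'' and your write-up --- isolating $g(\xi_{min})$ with $g(\xi)=m^2(\xi)\xi$, applying the mean value theorem to $g(\xi_2)-g(\xi_2+\xi_1)$ over a segment of length $|\xi_{min}|$ lying in $\{|\zeta|\gtrsim|\xi_{min}|\}$, and invoking $|(m^2(\xi)\xi)'|\lesssim m^2(\xi)$ --- is the standard way to fill in that phrase (your case split on $|\xi_{min}|$ versus $N$ is harmless but not needed, since the segment from $\xi_2$ to $-\xi_3$ always avoids the region $|\zeta|<|\xi_{min}|$). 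The one caveat, which you yourself correctly flag, is that the symbol bound $|m'(\xi)\xi|\lesssim m(\xi)$ on the transition region $N<|\xi|<2N$ does not follow from smoothness and monotonicity alone but is part of the standard (implicit) choice of the multiplier $m$.
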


Now we state some simple facts.

\begin{lem} The following estimates hold,
\bi

\item[{\rm(i)}]
           $\alpha_3(\xi_1,\xi_2,\xi_3+\xi_4)+\alpha_3(\xi_3,\xi_4,\xi_1+\xi_2)
           =\alpha_4(\xi_1,\xi_2,\xi_3,\xi_4)$;

\item[{\rm(ii)}]
          $|\alpha_4(\xi_1,\xi_2,\xi_3,\xi_4)|
          \sim |\xi_1+\xi_2||\xi_1+\xi_3||\xi_1+\xi_4|$;

\item[{\rm(iii)}]
          $|m^2(\xi_1)\xi_1+m^2(\xi_2)\xi_2
          +m^2(\xi_3)\xi_3+m^2(\xi_4)\xi_4|\lesssim
          |\alpha_4(\xi_1,\xi_2,\xi_3,\xi_4)|/|\xi_m|^2$.
\ei
\end{lem}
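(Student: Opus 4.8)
The plan is to handle the three claims separately: (i) is an algebraic identity coming from the oddness of $\phi$, (ii) records that on $\Gamma_4$ the cubic (KdV) part of the phase dominates the quadratic (Benjamin--Ono) part once the frequencies are $\gtrsim a$, and (iii) is the standard $I$-method ``double mean value'' estimate for $\sum_j m^2(\xi_j)\xi_j$. First, for (i): since $\phi(-\xi)=-\phi(\xi)$, expanding the definition of $\alpha_3$ gives
\[
\alpha_3(\xi_1,\xi_2,\xi_3+\xi_4)+\alpha_3(\xi_3,\xi_4,\xi_1+\xi_2)=i\sum_{j=1}^4\phi(\xi_j)+i\bigl(\phi(\xi_1+\xi_2)+\phi(\xi_3+\xi_4)\bigr),
\]
and on $\Gamma_4$ the last bracket vanishes, since $\xi_1+\xi_2=-(\xi_3+\xi_4)$ and $\phi$ is odd; what remains is $\alpha_4$. (Each three-frequency configuration occurring here lies on the relevant hyperplane because $\sum_{j=1}^4\xi_j=0$.)

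For (ii) write $\phi(\xi)=\mu\xi^3-\nu\,\xi|\xi|$, so that $-i\alpha_4=\mu\bigl(\xi_1^3+\xi_2^3+\xi_3^3+\xi_4^3\bigr)-\nu\sum_j\xi_j|\xi_j|$. On $\Gamma_4$ one has the elementary identity $\xi_1^3+\xi_2^3+\xi_3^3+\xi_4^3=3(\xi_1+\xi_2)(\xi_1+\xi_3)(\xi_1+\xi_4)$, obtained by substituting $\xi_4=-\xi_1-\xi_2-\xi_3$ and expanding, so the cubic part equals $3\mu(\xi_1+\xi_2)(\xi_1+\xi_3)(\xi_1+\xi_4)$. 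For the Benjamin--Ono part, a short case analysis on the sign pattern of $(\xi_1,\dots,\xi_4)$ (two positive and two negative, or three of one sign) shows on $\Gamma_4$ that
\[
\Bigl|\sum_j\xi_j|\xi_j|\Bigr|\lesssim\frac{\bigl|(\xi_1+\xi_2)(\xi_1+\xi_3)(\xi_1+\xi_4)\bigr|}{\max_j|\xi_j|},
\]
the point being that the pair-sum separating the positive from the negative frequencies has size $\sim\max_j|\xi_j|$ (in the three-of-one-sign case one also uses $(a+b+c)(ab+bc+ca)\geq3abc$). Since $\max_j|\xi_j|\gtrsim a\geq\frac{4|\nu|}{3|\mu|}$ -- the same hypothesis underlying (\ref{fact2}), and present in all applications -- the implied constant together with the factor $2$ in the definition of $a$ makes the Benjamin--Ono term strictly dominated by the cubic term, whence $|\alpha_4|\sim\bigl|(\xi_1+\xi_2)(\xi_1+\xi_3)(\xi_1+\xi_4)\bigr|$.

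For (iii) put $F(\xi)=m^2(\xi)\xi$, which is odd because $m$ is even. From (\ref{m}), the smoothness and monotonicity of $m$, and $s<0$, $N\gg1$, one checks the pointwise bounds $|F'(\xi)|\lesssim1$ and $|F''(\xi)|\lesssim|\xi|^{-1}$ for $\xi\neq0$ (in the range $|\xi|>2N$ the latter amounts to $(|\xi|/N)^{2s}\leq1$). Using $\sum_j\xi_j=0$ and the oddness of $F$ one rewrites $\sum_j m^2(\xi_j)\xi_j=\sum_j F(\xi_j)$, after pairing $\{\xi_1,\xi_2\}$ with $\{\xi_3,\xi_4\}$, as a first- or second-order finite difference of $F$ whose increments are among $\xi_1+\xi_2$, $\xi_1+\xi_3$, $\xi_1+\xi_4$. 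Ordering $|\xi_1|\geq\dots\geq|\xi_4|$ so that $|\xi_1|\sim|\xi_2|\sim|\xi_m|$, split according to how many of these three pair-sums are $\ll|\xi_m|$ (not all three, as $2\xi_1=\sum_{j\geq2}(\xi_1+\xi_j)$ rules that out): if none, $|\sum_j F(\xi_j)|\leq\sum_j|\xi_j|\lesssim|\xi_m|$; if exactly one, the mean value theorem with $|F'|\lesssim1$ bounds it by that pair-sum; if two, say $h$ and $k$, the double mean value theorem with $|F''|\lesssim|\xi_m|^{-1}$ bounds it by $|h|\,|k|/|\xi_m|$. In each case the ``large'' pair-sums are $\sim|\xi_m|$ and the ``small'' ones are exactly the increments, so by (ii) these bounds coincide with $\bigl|(\xi_1+\xi_2)(\xi_1+\xi_3)(\xi_1+\xi_4)\bigr|/|\xi_m|^2\sim|\alpha_4|/|\xi_m|^2$, which is (iii).

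I expect (iii) to be the main obstacle, and it is essentially bookkeeping: one must verify that in every sign/size configuration the reduction of $\sum_j F(\xi_j)$ to a finite difference can be arranged with the increments being the genuinely small pair-sums, and one must establish the pointwise bound $|F''(\xi)|\lesssim|\xi|^{-1}$ (this is where $s<0$ and $|\xi|\gtrsim N$ enter) so that the double mean value output lines up with $|\alpha_4|/|\xi_m|^2$ via (ii). Part (i) is immediate, and the sign-pattern bookkeeping in (ii) is routine.
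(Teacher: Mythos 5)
Your proposal is correct and follows essentially the same route as the paper: (i) by the oddness of $\phi$ and a direct check, (ii) by factoring $\sum_j\xi_j^3=3(\xi_1+\xi_2)(\xi_1+\xi_3)(\xi_1+\xi_4)$ on $\Gamma_4$ and showing via a sign-pattern case analysis that the Benjamin--Ono part is dominated once $\max_j|\xi_j|\gtrsim a$, and (iii) by the (double) mean value theorem applied to $f(\xi)=m^2(\xi)\xi$ together with (ii). The only difference is organizational: in (iii) you index the cases by how many of the pair-sums $\xi_1+\xi_j$ are $\ll|\xi_1|$ rather than by the sign patterns (1)--(3) of the paper, which is an equivalent (and, in the subcase $|\xi_3|\sim|\xi_4|$ of the paper's pattern (2), slightly more robust) bookkeeping of the same estimate.
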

\begin{proof}
(i) easily follows from a direct check.
For (ii), we may assume that $|\xi_1|\geq |\xi_2|\geq|\xi_3|\geq|\xi_4|$
by symmetry. By the facts that
$$
\alpha_4(\xi_1,\xi_2,\xi_3,\xi_4)
=-\nu(\xi_1|\xi_1|+\xi_2|\xi_2|+\xi_3|\xi_3|+\xi_4|\xi_4|)+
\mu(\xi_1^3+\xi_2^3+\xi_3^3+\xi_4^3)
$$
and
$$
\xi_1^3+\xi_2^3+\xi_3^3+\xi_4^3=(\xi_1+\xi_2)(\xi_1+\xi_3)(\xi_1+\xi_4),
$$
we only need to show
$$
\xi_1|\xi_1|+\xi_2|\xi_2|+\xi_3|\xi_3|+\xi_4|\xi_4|
\ll|\xi_1+\xi_2||\xi_1+\xi_3||\xi_1+\xi_4|.
$$
For this purpose, we may assume that $\xi_1>0$ and
split into three cases as following:
\begin{eqnarray}
&(1), \xi_1>0, \xi_2>0,\xi_3<0,\xi_4<0;\quad
(2), \xi_1>0, \xi_2<0,\xi_3<0,\xi_4>0;\nonumber\\
&(3), \xi_1>0, \xi_2<0,\xi_3<0,\xi_4<0.\label{4.71}
\end{eqnarray}
For (1),
\begin{eqnarray*}
&&\big|\xi_1|\xi_1|+\xi_2|\xi_2|+\xi_3|\xi_3|+\xi_4|\xi_4|\big|
=|\xi_1^2+\xi_2^2-\xi_3^2-\xi_4^2|\\
&=&|\xi_1+\xi_3||\xi_1+\xi_4|
\ll|\xi_1+\xi_2||\xi_1+\xi_3||\xi_1+\xi_4|.
\end{eqnarray*}
For (2),
$$
\big|\xi_1|\xi_1|+\xi_2|\xi_2|+\xi_3|\xi_3|+\xi_4|\xi_4|\big|
=|\xi_1+\xi_2||\xi_1+\xi_3|
\ll|\xi_1+\xi_2||\xi_1+\xi_3||\xi_1+\xi_4|.
$$
For (3), on one hand,
\begin{eqnarray*}
&&\big|\xi_1|\xi_1|+\xi_2|\xi_2|+\xi_3|\xi_3|+\xi_4|\xi_4|\big|
=|\xi_1^2-\xi_2^2-\xi_3^2-\xi_4^2|\\
&\leq&|\xi_1+\xi_2||\xi_1-\xi_2|+|\xi_3+\xi_4|^2
\sim|\xi_1+\xi_2||\xi_1|;
\end{eqnarray*}
on the other hand, we note that $|\xi_1|-|\xi_3|\gtrsim |\xi_1|,$ so
$$
|\xi_1+\xi_2||\xi_1+\xi_3||\xi_1+\xi_4|\sim|\xi_1+\xi_2||\xi_1|^2.
$$
Thus we have the claim (ii).

For (iii), we also assume that $|\xi_1|\geq |\xi_2|\geq|\xi_3|\geq|\xi_4|$
and split it into three cases as (\ref{4.71}). Then, for (1), we have
$|\xi_1|\sim|\xi_4|$, thus by the double mean value theorem
(see \cite{CKSTT}, for example),
\begin{eqnarray*}
|m^2(\xi_1)\xi_1+\cdots+m^2(\xi_4)\xi_4|
&\lesssim&
|\xi_1+\xi_3||\xi_1+\xi_4||f''(\xi_1)|\\
&\lesssim&
|\xi_1+\xi_2||\xi_1+\xi_3||\xi_1+\xi_4|/|\xi_1|^2,
\end{eqnarray*}
where $f(\xi)=m^2(\xi)\xi$. For (2), if $|\xi_1|\sim|\xi_4|$, it can be
show similarly as (1). If $|\xi_1|\gg|\xi_4|$,
we have $|\xi_1|-|\xi_3|\sim |\xi_1|$, thus
$$
|\xi_1+\xi_2||\xi_1+\xi_3||\xi_1+\xi_4|\sim|\xi_1+\xi_2||\xi_1|^2.
$$
Therefore, since $|\xi_1+\xi_2|\sim |\xi_3|$, we have,
\begin{eqnarray*}
|m^2(\xi_1)\xi_1+\cdots+m^2(\xi_4)\xi_4|
&\lesssim&
|\xi_1+\xi_2|+|\xi_3|+|\xi_4|\\
&\lesssim&
|\xi_1+\xi_2|
\lesssim
|\xi_1+\xi_2||\xi_1+\xi_3||\xi_1+\xi_4|/|\xi_1|^2.
\end{eqnarray*}
For (3), we can treat it as (2) when
$|\xi_1|\gg|\xi_4|$. Hence we have the claim by (ii).
This completes the proof of the lemma.
\hfill$\Box$
\end{proof}

Now we establish the following pointwise upper bound on the multiplier
$M_4$.
\begin{lem} let the $M_4$ defined in (\ref{M4}) and
$|\xi_{max}|=\max\{|\xi_1|,|\xi_2|,|\xi_3|\}$, we have
\begin{equation}\label{EM4}
    |M_4(\xi_1,\xi_2,\xi_3,\xi_4)|\lesssim
\min\left\{\frac{1}
{|\xi_{max}|},\frac{|\alpha_4(\xi_1,\xi_2,\xi_3,\xi_4)|}
{|\xi_1||\xi_2||\xi_3||\xi_4|}\right\}.
\end{equation}
\end{lem}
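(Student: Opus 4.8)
The plan is to remove the symmetrization, reduce each grouping of the four frequencies into two pairs to a difference of two cubic quantities, and then bound that difference using the cubic estimates already at hand — Lemma 4.1 for $M_3$, Lemma 4.2 for $\alpha_4$ and $\sum m^2(\xi_j)\xi_j$, and the cubic resonance lower bound (\ref{fact2}) for $\alpha_3$.

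First I would unfold (\ref{M4}). Since $\sigma_3$ is symmetric in its three slots, $\sigma_3(\xi_1,\xi_2,\xi_3+\xi_4)(\xi_3+\xi_4)$ is invariant under $\xi_1\leftrightarrow\xi_2$ and under $\xi_3\leftrightarrow\xi_4$, so the symmetrization collapses to an average over the three ways of writing $\{1,2,3,4\}=\{a,b\}\sqcup\{c,d\}$: up to an absolute constant, $M_4$ equals
$$\sum_{\{a,b\}\sqcup\{c,d\}}\Big[\,\sigma_3(\xi_a,\xi_b,\xi_c+\xi_d)(\xi_c+\xi_d)+\sigma_3(\xi_c,\xi_d,\xi_a+\xi_b)(\xi_a+\xi_b)\,\Big].$$
Using $\xi_c+\xi_d=-(\xi_a+\xi_b)$ on $\Gamma_4$ and $\sigma_3=-M_3/\alpha_3$, and writing $a_3=\alpha_3(\xi_a,\xi_b,\xi_c+\xi_d)$, $b_3=\alpha_3(\xi_c,\xi_d,\xi_a+\xi_b)$, the bracket of one grouping is $(\xi_a+\xi_b)\big[\,M_3(\xi_a,\xi_b,\xi_c+\xi_d)/a_3-M_3(\xi_c,\xi_d,\xi_a+\xi_b)/b_3\,\big]$. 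I may assume all the $\alpha_3$'s are nonzero (a generic condition) and pass to the general case by continuity of $M_4$.

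For the bound by $|\alpha_4|/(|\xi_1||\xi_2||\xi_3||\xi_4|)$ I would exploit this subtraction algebraically. Because $m$ is even and $\xi_a+\xi_b=-(\xi_c+\xi_d)$, the definition (\ref{M3}) yields $M_3(\xi_a,\xi_b,\xi_c+\xi_d)+M_3(\xi_c,\xi_d,\xi_a+\xi_b)=-\tfrac{2i}{3}\sum_{j}m^2(\xi_j)\xi_j$, and Lemma 4.2(i) gives $a_3+b_3=\alpha_4$; eliminating, the bracket becomes
$$(\xi_a+\xi_b)\Big[\,\frac{-\tfrac{2i}{3}\sum_{j}m^2(\xi_j)\xi_j}{a_3}\;-\;\frac{\alpha_4\,M_3(\xi_c,\xi_d,\xi_a+\xi_b)}{a_3\,b_3}\,\Big].$$
By Lemma 4.2(iii), $|\sum_j m^2(\xi_j)\xi_j|\lesssim|\alpha_4|/|\xi_{max}|^2$, and by (\ref{fact2}), $|a_3|\gtrsim|\xi_a||\xi_b||\xi_a+\xi_b|$ and $|b_3|\gtrsim|\xi_c||\xi_d||\xi_a+\xi_b|$; together with Lemma 4.1 in the form $|M_3(\xi_c,\xi_d,\xi_a+\xi_b)|\lesssim\min\{|\xi_c|,|\xi_d|,|\xi_a+\xi_b|\}$, the first term above is $\lesssim|\alpha_4|/(|\xi_a||\xi_b||\xi_{max}|^2)$ and the second is $\lesssim|\alpha_4|\min\{|\xi_c|,|\xi_d|,|\xi_a+\xi_b|\}/(|\xi_1||\xi_2||\xi_3||\xi_4||\xi_a+\xi_b|)$; each is $\le|\alpha_4|/(|\xi_1||\xi_2||\xi_3||\xi_4|)$ — in the first term one uses $|\xi_c||\xi_d|\le|\xi_{max}|^2$ — and summing the three groupings yields that half of (\ref{EM4}).

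For the bound by $1/|\xi_{max}|$ I would instead keep the original subtraction and bound each ratio of the first step separately: by Lemma 4.1 and (\ref{fact2}),
$$\Big|\,\frac{M_3(\xi_a,\xi_b,\xi_c+\xi_d)}{a_3}(\xi_a+\xi_b)\,\Big|\lesssim\frac{\min\{|\xi_a|,|\xi_b|,|\xi_a+\xi_b|\}}{|\xi_a||\xi_b|}\le\frac{1}{\max\{|\xi_a|,|\xi_b|\}},$$
and similarly for the $\{c,d\}$-ratio. Since on $\Gamma_4$ the two largest frequencies are comparable, in the two groupings in which each pair contains one of them both ratios are $\lesssim 1/|\xi_{max}|$, and one is reduced to the grouping that puts the two largest frequencies in the \emph{same} pair $\{a,b\}$: there the $\{a,b\}$-ratio is $\lesssim 1/\max\{|\xi_a|,|\xi_b|\}=1/|\xi_{max}|$, while the $\{c,d\}$-ratio is only $\lesssim 1/\max\{|\xi_c|,|\xi_d|\}$, and one must use that as soon as $|\xi_c|,|\xi_d|$ — hence $|\xi_a+\xi_b|=|\xi_c+\xi_d|$ — are all $\lesssim N$, one has $m\equiv 1$ on all three arguments of $M_3(\xi_c,\xi_d,\xi_a+\xi_b)$, which therefore vanishes identically because those arguments sum to zero; the same vanishing also covers the sub-case in which (\ref{fact2}) is unavailable because all of $|\xi_a|,|\xi_b|,|\xi_a+\xi_b|\lesssim a$, so that grouping collapses to its surviving ratio. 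The hard part is exactly this last grouping: retaining the subtraction and trading the large denominator for the low-frequency cancellation of $M_3$, which at the level of $M_3(\xi_c,\xi_d,\xi_a+\xi_b)=-\tfrac{2i}{3}\big(f(\xi_c)+f(\xi_d)-f(\xi_c+\xi_d)\big)$, $f(\xi)=m^2(\xi)\xi$, is a mean-value estimate on $f$; everything else is just summation over the three groupings. Precisely to make this trade-off transparent is why the algebraic rewriting of the previous paragraph is isolated.
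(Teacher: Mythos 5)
Your proof of the second bound, $|M_4|\lesssim|\alpha_4|/(|\xi_1||\xi_2||\xi_3||\xi_4|)$, is correct and is essentially the paper's own argument: the paper likewise collapses the symmetrization onto complementary pairings, uses $\xi_c+\xi_d=-(\xi_a+\xi_b)$ to form the difference $M_3(\xi_a,\xi_b,\xi_c+\xi_d)/a_3-M_3(\xi_c,\xi_d,\xi_a+\xi_b)/b_3$, and splits it into a piece carrying $\sum_j m^2(\xi_j)\xi_j$ (handled by Lemma 4.2(iii)) and a piece carrying $a_3+b_3=\alpha_4$ (handled by Lemma 4.2(i) together with (\ref{fact2}) and (\ref{EM3})). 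Your partial-fraction rearrangement differs from the paper's $I_1+I_2$ only cosmetically, and your estimates of the two pieces are the same as theirs.

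The genuine gap is in the first bound, and you have located it exactly but not closed it. Note first that the paper gives no argument there at all (it asserts the bound ``easily follows from (\ref{fact2}) and (\ref{EM3})''), so you cannot fall back on it. Your reduction is right: the only delicate grouping is the one putting the two largest frequencies into the pair $\{a,b\}$, where the term $\sigma_3(\xi_c,\xi_d,\xi_a+\xi_b)(\xi_a+\xi_b)$ is only bounded by $1/\max\{|\xi_c|,|\xi_d|\}$, and your vanishing observation disposes of the sub-case $|\xi_c|,|\xi_d|\ll N$. But in the remaining sub-case $N\lesssim\max\{|\xi_c|,|\xi_d|\}\ll|\xi_{max}|$ the mean-value estimate you defer to produces no extra smallness: with $f(\xi)=m^2(\xi)\xi$ and, say, $|\xi_d|\le N<|\xi_c|$, one has $M_3(\xi_c,\xi_d,-(\xi_c+\xi_d))=-\tfrac{2i}{3}\,\xi_d\,(1-f'(\theta))$ with $\theta$ between $\xi_c$ and $\xi_c+\xi_d$; since $|\theta|\gtrsim N$ one has $f'(\theta)\neq 1$, so $|M_3|\sim|\xi_d|$, which is exactly what (\ref{EM3}) already gave, and the term is genuinely of size $1/|\xi_c|$. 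Concretely, for $\xi_1=K$, $\xi_2=-K+10N$, $\xi_3=-10N+h$, $\xi_4=-h$ with $h\ll N\ll K$, the grouping $\{1,2\}\{3,4\}$ contributes $\sim 1/N$, the other contributions are $O(1/K)$, and $1/|\xi_{max}|=1/K$; no cancellation between groupings is available. So the step you label ``the hard part'' is not merely unfinished: in that region what your argument (and, as far as I can see, any argument along these lines) actually yields is $|M_4|\lesssim 1/|\xi_3^{*}|$, with $\xi_3^{*}$ the third largest frequency, not $1/|\xi_{max}|$. To repair this you would have to either prove and work with that weaker bound --- and then re-examine Lemma 4.6, which uses $|M_4|\lesssim 1/|\xi_1|$ with $\xi_1$ the largest frequency --- or exhibit an additional cancellation that neither your decomposition nor the paper's reveals.
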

\begin{proof}
The first term $|M_4|\lesssim\dfrac{1}{|\xi_{max}|}$ easily follows from
(\ref{fact2}) and (\ref{EM3}). Now we turn to prove the second term.
Rewrite $M_4$ as
\begin{eqnarray}
M_4(\xi_1,\xi_2,\xi_3,\xi_4)
\!&\!=\!&\!
C\left[\frac{M_3(\xi_1,\xi_2,\xi_3+\xi_4)(\xi_3+\xi_4)}
{\alpha_3(\xi_1,\xi_2,\xi_3+\xi_4)}
+\frac{M_3(\xi_3,\xi_4,\xi_1+\xi_2)(\xi_1+\xi_2)}
{\alpha_3(\xi_3,\xi_4,\xi_1+\xi_2)}\right]_{sym}\nonumber\\
\!&\!=\!&\!
C\left[\left(\frac{M_3(\xi_1,\xi_2,\xi_3+\xi_4)}
{\alpha_3(\xi_1,\xi_2,\xi_3+\xi_4)}
-\frac{M_3(\xi_3,\xi_4,\xi_1+\xi_2)}
{\alpha_3(\xi_3,\xi_4,\xi_1+\xi_2)}\right)(\xi_3+\xi_4)\right]_{sym}.\label{4.12}
\end{eqnarray}
Moveover, it is easy to see that
$$
\frac{M_3(\xi_1,\xi_2,\xi_3+\xi_4)}
{\alpha_3(\xi_1,\xi_2,\xi_3+\xi_4)}
-\frac{M_3(\xi_3,\xi_4,\xi_1+\xi_2)}
{\alpha_3(\xi_3,\xi_4,\xi_1+\xi_2)}=I_1+I_2,
$$
for the $I_1,I_2$ defined as
\begin{eqnarray*}
  I_1 &=& M_3(\xi_1,\xi_2,\xi_3+\xi_4)\cdot
\left(\frac{1}
{\alpha_3(\xi_1,\xi_2,\xi_3+\xi_4)}
+\frac{1}
{\alpha_3(\xi_3,\xi_4,\xi_1+\xi_2)}\right);\\
  I_2 &=&
-\frac{m^2(\xi_1)\xi_1+m^2(\xi_2)\xi_2
          +m^2(\xi_3)\xi_3+m^2(\xi_4)\xi_4}
{\alpha_3(\xi_3,\xi_4,\xi_1+\xi_2)}.
\end{eqnarray*}
For $I_1$, by (\ref{fact2}), (\ref{EM3}) and Lemma 4.2(i), we have
\begin{eqnarray*}
|I_1|
&\lesssim&
\left|\xi_1+\xi_2\right|
\frac{|\alpha_3(\xi_1,\xi_2,\xi_3+\xi_4)+\alpha_3(\xi_3,\xi_4,\xi_1+\xi_2)|}
{|\alpha_3(\xi_1,\xi_2,\xi_3+\xi_4)\alpha_3(\xi_3,\xi_4,\xi_1+\xi_2)|}\\
&\lesssim&
\frac{|\alpha_4(\xi_1,\xi_2,\xi_3,\xi_4)|}
{|\xi_1||\xi_2||\xi_3||\xi_4||\xi_1+\xi_2|}.
\end{eqnarray*}
For $I_2$, , by Lemma 4.2(iii) and (\ref{fact2}), we also have
$$
|I_2|
\lesssim
\frac{|\alpha_4(\xi_1,\xi_2,\xi_3,\xi_4)|/|\xi_m|^2}
{|\alpha_3(\xi_1,\xi_2,\xi_3+\xi_4)|}\lesssim
\frac{|\alpha_4(\xi_1,\xi_2,\xi_3,\xi_4)|}
{|\xi_1||\xi_2||\xi_3||\xi_4||\xi_1+\xi_2|}.
$$
Then we have the result by combining (\ref{4.12}).
\hfill$\Box$
\end{proof}

Now we turn to give the pointwise upper bound on the multiplier
$M_5$. It directly follows from Lemma 4.3.
\begin{lem} For the $M_5$ defined in (\ref{M5}), we have
\begin{equation}\label{EM5}
    |M_5(\xi_1,\xi_2,\xi_3,\xi_4)|\lesssim
\frac{\chi_{\Omega_5}}{|\xi_1||\xi_2||\xi_3|}.
\end{equation}
where $\Omega_5=\{(\xi_1,\xi_2,\xi_3,\xi_4,\xi_5):
\xi_1+\xi_2+\xi_3+\xi_4+\xi_5=0,
  |\xi_1|, |\xi_2|,|\xi_3|,|\xi_4+\xi_5|\gtrsim N\}$.
\end{lem}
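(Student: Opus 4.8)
The plan is to derive the bound on $M_5$ directly from the definition \eqref{M5} together with the pointwise estimate on $M_4$ from Lemma 4.3, using that $\sigma_4 = -\tilde M_4/\alpha_4 = -\chi_\Omega M_4/\alpha_4$. First I would unwind the symmetrization in \eqref{M5}: $M_5(\xi_1,\dots,\xi_5)$ is a finite symmetric sum of terms of the form $\sigma_4(\xi_1,\xi_2,\xi_3,\xi_4+\xi_5)(\xi_4+\xi_5)$, so it suffices to bound a single such term and then invoke the fact that the support condition and the size bound are symmetric under the relevant permutations. For a fixed term, write $\eta_4 = \xi_4+\xi_5$, so that $(\xi_1,\xi_2,\xi_3,\eta_4)$ lies on $\Gamma_4$, and we are looking at $\dfrac{\chi_\Omega(\xi_1,\xi_2,\xi_3,\eta_4)\, M_4(\xi_1,\xi_2,\xi_3,\eta_4)}{\alpha_4(\xi_1,\xi_2,\xi_3,\eta_4)}\,\eta_4$.

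Next I would apply the second half of the bound \eqref{EM4} in Lemma 4.3, namely $|M_4(\xi_1,\xi_2,\xi_3,\eta_4)| \lesssim |\alpha_4(\xi_1,\xi_2,\xi_3,\eta_4)|\big/(|\xi_1||\xi_2||\xi_3||\eta_4|)$. The crucial point is the cancellation of $\alpha_4$: dividing by $\alpha_4(\xi_1,\xi_2,\xi_3,\eta_4)$ and multiplying by $\eta_4$ leaves, in absolute value, a quantity bounded by $\dfrac{1}{|\xi_1||\xi_2||\xi_3||\eta_4|}\cdot|\eta_4| = \dfrac{1}{|\xi_1||\xi_2||\xi_3|}$. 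The factor $\chi_\Omega(\xi_1,\xi_2,\xi_3,\eta_4)$ forces $|\xi_1|,|\xi_2|,|\xi_3|,|\eta_4| = |\xi_4+\xi_5| \gtrsim N$, which is exactly the characteristic function $\chi_{\Omega_5}$ appearing in the claim. Summing over the symmetrization and relabeling, one obtains $|M_5(\xi_1,\dots,\xi_5)| \lesssim \chi_{\Omega_5}\big/(|\xi_1||\xi_2||\xi_3|)$ — though one should note that after symmetrization the three ``small'' indices in the denominator need not literally be $1,2,3$; what is really meant (and what the later estimates use) is $1/(|\xi_1^*||\xi_2^*||\xi_3^*|)$ for the three largest frequencies among $\xi_1,\dots,\xi_5$, and since $\chi_{\Omega_5}$ guarantees at least three frequencies of size $\gtrsim N$, this is consistent; I would phrase the write-up so this is clear.

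I do not expect a serious obstacle here: the lemma is essentially a one-line consequence of Lemma 4.3 once the algebra $\sigma_4 = -\tilde M_4/\alpha_4$ is substituted into \eqref{M5}. The only mild subtlety is bookkeeping in the symmetrization — making sure that every term produced by $[\,\cdot\,]_{sym}$ carries a copy of $\alpha_4$ in the numerator of $M_4$ that cancels the $\alpha_4$ in the denominator of $\sigma_4$, and that the support restriction $\chi_\Omega$ inside $\tilde M_4$ transfers correctly to $\chi_{\Omega_5}$ under each permutation. If one wanted to be careful, one would also observe that we need $\alpha_4(\xi_1,\xi_2,\xi_3,\eta_4)\neq 0$ on the support of $\chi_\Omega$, but this follows from Lemma 4.2(ii) since $|\alpha_4|\sim|\xi_1+\xi_2||\xi_1+\xi_3||\xi_1+\xi_4|$ and on $\Omega$ resonances of the relevant type do not degenerate — or more simply, $\sigma_4$ is well defined wherever $\tilde M_4$ is nonzero because the derivation of \eqref{E4'} already presupposes it. So the proof is: substitute, cancel $\alpha_4$, read off the support from $\Omega$, done.
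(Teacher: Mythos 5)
Your proposal is correct and matches the paper's own (very terse) justification: the paper simply says the bound ``directly follows from Lemma 4.3,'' meaning exactly the substitution $\sigma_4=-\chi_\Omega M_4/\alpha_4$, the cancellation of $\alpha_4$ against the second bound in \eqref{EM4}, and the cancellation of $|\xi_4+\xi_5|$ against the corresponding factor in the denominator, with the support condition $\chi_{\Omega_5}$ read off from $\Omega$. Your additional remarks on the symmetrization bookkeeping and the relabeling of the three large frequencies are sensible clarifications of the same argument, not a different route.
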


\subsection{Multilinear Estimates and Proof of Theorem 1.1}

First we give the comparison between $E^2_I(t)$ and $E^4_I(t)$.
\begin{lem} Let $I=I_{N,s}$ for $s\geq -\dfrac{3}{4}$, then
\beq
 |E^2_I(t)-E^4_I(t)|\lesssim
 N^{-\frac{3}{2}}\|Iu(t)\|_{L^2}^3+N^{-3}\|Iu(t)\|_{L^2}^4.
 \label{E23}
\eeq
\end{lem}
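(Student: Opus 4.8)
The plan is to estimate $E^2_I(t) - E^4_I(t)$ by splitting it according to the two correction terms introduced in the definitions of the modified energies. Recalling \eqref{E3} and \eqref{E4}, we have
$$
E^4_I(t) - E^2_I(t) = \Lambda_3(\sigma_3) + \Lambda_4(\sigma_4),
$$
so it suffices to bound $|\Lambda_3(\sigma_3)|$ by $N^{-3/2}\|Iu(t)\|_{L^2}^3$ and $|\Lambda_4(\sigma_4)|$ by $N^{-3}\|Iu(t)\|_{L^2}^4$. For each piece, the strategy is the standard one: obtain a pointwise bound on the multiplier ($\sigma_3$, respectively $\sigma_4$), insert it into the $\Lambda_k$ integral, and close the estimate using Plancherel together with the Strichartz/bilinear estimates from Section 2 (at fixed time $t$, so one really wants the $L^2_x$-based multilinear inequalities, or alternatively the $X_{0,b}$ estimates after absorbing a characteristic function $\psi(t)$).

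First I would handle $\Lambda_3(\sigma_3)$. By definition $\sigma_3 = -M_3/\alpha_3$; on $\Gamma_3$ the summation convention forces $\max\{|\xi_1|,|\xi_2|,|\xi_3|\}\gtrsim |\xi_{min}|$, and since $\sigma_3$ is only relevant when frequencies are large enough to feel the multiplier $m$ (when all $|\xi_j|\leq N$ one has $M_3\equiv 0$ because $m\equiv 1$ there and $\sum \xi_j = 0$), we are in the regime $\max |\xi_j|\gtrsim N$, where \eqref{fact2} gives $|\alpha_3|\gtrsim |\xi_1||\xi_2||\xi_3|$. Combined with Lemma 4.1, $|M_3|\lesssim m^2(\xi_{min})|\xi_{min}|$, this yields $|\sigma_3|\lesssim m^2(\xi_{min})/(|\xi_{max}||\xi_{mid}|)\lesssim N^{-3/2}\cdot(\text{harmless factors})$ once one checks that the two large frequencies each contribute a factor $\gtrsim N$ while the $m^2$ factor and Bernstein-type gains absorb the remaining derivative. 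One then writes $\Lambda_3(\sigma_3)$ as an integral of $\widehat{Iu}\,\widehat{Iu}\,\widehat{Iu}$ against the bounded-by-$N^{-3/2}$ kernel, and closes with a trilinear $L^2$ estimate (e.g.\ the $\|uv\|_{L^2}\lesssim\|u\|\,\|v\|$-type bound implicit in \eqref{2.18}, or directly Hölder with the $L^6_{xt}$ Strichartz inequality \eqref{XE3}) after inserting $\psi(t)$ to make $X_{0,b}$ norms available; the resulting bound is $N^{-3/2}\|Iu(t)\|_{L^2}^3$.

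For $\Lambda_4(\sigma_4)$, recall $\sigma_4 = -\tilde M_4/\alpha_4$ with $\tilde M_4 = \chi_\Omega M_4$, so the frequencies satisfy $|\xi_1|,\dots,|\xi_4|\gtrsim N$. On $\Omega$, Lemma 4.3 gives $|M_4|\lesssim |\alpha_4|/(|\xi_1||\xi_2||\xi_3||\xi_4|)$, hence $|\sigma_4|\lesssim 1/(|\xi_1||\xi_2||\xi_3||\xi_4|)\lesssim N^{-4}\cdot(\text{factors}) $ — more precisely, distributing the four $|\xi_j|^{-1}$'s so that two of them (the two smallest, both $\gtrsim N$) give $N^{-2}$ and so on; a cleaner route is to note $|\sigma_4|\lesssim |\xi_{(3)}^*|^{-1}|\xi_{(4)}^*|^{-1}\lesssim N^{-2}$ times two more $|\xi|^{-1}$ on the top frequencies, and one wants the net smallness $N^{-3}$ (the extra $N^{-1}$ beyond $N^{-2}$ being available because in the quartic term no derivative falls, so each of the four factors is a genuine $|\xi|^{-1}$). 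Then $\Lambda_4(\sigma_4)$ is an integral of $\prod_{j=1}^4 \widehat{Iu}(\xi_j)$ against this kernel, estimated by Plancherel and a quadrilinear Hölder splitting $\|Iu\cdot Iu\|_{L^2}\|Iu\cdot Iu\|_{L^2}$, again via \eqref{2.18}/\eqref{XE3} after an $\psi(t)$ insertion, giving $N^{-3}\|Iu(t)\|_{L^2}^4$. Summing the two contributions yields \eqref{E23}.

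The main obstacle is bookkeeping the multiplier bounds precisely enough to extract exactly the powers $N^{-3/2}$ and $N^{-3}$ — i.e.\ verifying that after using $|\alpha_3|\gtrsim|\xi_1||\xi_2||\xi_3|$ and Lemmas 4.1–4.3, every derivative appearing in $M_3$ or in the $(\xi_3+\xi_4)$ factor of $M_4$ is compensated, and that the frequencies one invokes as "$\gtrsim N$" really are the ones supplying the $N$-gains (this is where the restriction to the set $\Omega$ in the definition of $\tilde M_4$ is essential: off $\Omega$ one of the $|\xi_j|$ can be small and the naive bound fails, but there the correction term is simply not included). The multilinear $L^2$ estimates themselves are routine given Section 2, so I do not expect difficulty there; the delicacy is entirely in the pointwise symbol analysis and in confirming that $s\geq -3/4$ is exactly the threshold that makes the two large-frequency factors absorb the lone derivative in the cubic term.
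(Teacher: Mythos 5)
Your skeleton is the same as the paper's: split $E^4_I-E^2_I=\Lambda_3(\sigma_3)+\Lambda_4(\sigma_4)$, bound $\sigma_3$ pointwise via \eqref{fact2} and \eqref{EM3}, bound $\sigma_4$ via Lemma 4.3 and the restriction to $\Omega$, then close with a multilinear estimate. However, the step where you actually close the estimate is wrong in approach. This lemma is a \emph{fixed-time} statement: the right-hand side of \eqref{E23} is $\|Iu(t)\|_{L^2}$ at the single time $t$, and the lemma is used in the iteration precisely as a pointwise-in-time comparison between $E^2_I(t)$ and $E^4_I(t)$. Inserting $\psi(t)$ and invoking \eqref{2.18}, \eqref{XE3} or $X_{0,b}$ norms cannot produce such a bound: those are spacetime norms of the solution over an interval, and they are not controlled by $\|Iu(t)\|_{L^2}$ at one time (controlling them requires the local theory and yields $\|Iu_0\|_{L^2}$, not $\|Iu(t)\|_{L^2}$). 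The paper closes instead with the fixed-time Sobolev embedding $\|D_x^{-1/4}f\|_{L^4_x}\lesssim\|f\|_{L^2_x}$, applied to two factors in the cubic term and to all four factors in the quartic term.

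To make that Sobolev step available you must produce the leftover weights $|\xi_j|^{-1/4}$, and this is exactly where your multiplier bookkeeping goes astray: since $\Lambda_k$ is written in terms of $\F_xu$ while the bound is in terms of $Iu$, you must estimate $\sigma_3/\bigl(m(\xi_1)m(\xi_2)m(\xi_3)\bigr)$ and $\sigma_4/\prod_{j}m(\xi_j)$, not $\sigma_3$ and $\sigma_4$ themselves. The division by the $m$'s costs you (each $1/m(\xi_j)\sim(|\xi_j|/N)^{-s}$ is large for $|\xi_j|\gg N$), and the whole point of the hypothesis $s\geq-3/4$ is the elementary bound $\frac{1}{m(\xi)|\xi|}=N^{s}|\xi|^{-1-s}\leq N^{-3/4}|\xi|^{-1/4}$ for $|\xi|\gtrsim N$. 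Two such factors give $N^{-3/2}|\xi_1|^{-1/4}|\xi_2|^{-1/4}$ for the cubic piece and four give $N^{-3}\prod_j|\xi_j|^{-1/4}$ for the quartic piece; these are then exactly what $\|D_x^{-1/4}u\|_{L^4_x}^2\|u\|_{L^2}$ and $\|D_x^{-1/4}u\|_{L^4_x}^4$ absorb. Your accounting (``two large frequencies each contribute $\gtrsim N$,'' ``$N^{-2}$ times two more $|\xi|^{-1}$,'' ``$|\sigma_4|\lesssim N^{-4}$'') ignores the $1/m$ renormalization, does not explain why the quartic gain is $N^{-3}$ rather than $N^{-4}$, and leaves no $|\xi_j|^{-1/4}$ weights --- without which even a fixed-time H\"older argument cannot close, since $\|u\|_{L^4_x}$ is not controlled by $\|u\|_{L^2_x}$.
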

\begin{proof}
By the definitions (\ref{E3}) and (\ref{E4}), we need to show
\begin{eqnarray}
|\Lambda_3(\sigma_3)|
&\lesssim&
N^{-\frac{3}{2}}\|Iu(t)\|_{L^2}^3;\label{A3}\\
|\Lambda_4(\sigma_4)|
&\lesssim&
N^{-3}\|Iu(t)\|_{L^2}^4.\label{A4}
\end{eqnarray}
We may assume that $\F_xu(\xi,t)$ is nonnegative.
For (\ref{A3}), since $\xi_1+\xi_2+\xi_3=0$,
by symmetry we may assume again that
$|\xi_1|\sim|\xi_2|\geq |\xi_3|$. Note that
$\sigma_3$ vanishes when $|\xi_j|\leq N$ for $j=1,2,3$, so we may assume further that
$|\xi_1|,|\xi_2|\gtrsim N$.

Set
$$
\triangle\equiv\dfrac{|\sigma_3|}{m(\xi_1)m(\xi_2)m(\xi_3)}
=\dfrac{2\left|m^2(\xi_1)\xi_1+m^2(\xi_2)\xi_2+m^2(\xi_3)\xi_3\right|}
       {3|\alpha_3(\xi_1,\xi_2,\xi_3)|\,m(\xi_1)m(\xi_2)m(\xi_3)},
$$
then (\ref{E23}) follows if we show
$$
|\Lambda_3(\triangle)|\lesssim N^{-\frac{3}{2}}\|u\|_{L^2}^3.
$$

By (\ref{fact2}),  (\ref{EM3}) and $s\geq -\dfrac{3}{4}$, we have
$$
\triangle\lesssim\dfrac{1}{|\xi_1\xi_2|\,m(\xi_1)m(\xi_2)}\sim
N^{2s}|\xi_1|^{-1-s}|\xi_2|^{-1-s}\lesssim
N^{-\frac{3}{2}}|\xi_1|^{-\frac{1}{4}}|\xi_2|^{-\frac{1}{4}}.
$$
Therefore, by Plancherel's identity, H\"{o}lder and Sobolev's inequalities,
we have,
\beqs
\begin{split}
 |\Lambda_3(\triangle)|&\lesssim
N^{-\frac{3}{2}}|\Lambda_3(|\xi_1|^{-\frac{1}{4}}|\xi_2|^{-\frac{1}{4}})|\\
&\lesssim N^{-\frac{3}{2}}\left\|D_x^{-\frac{1}{4}}u\right\|_{L^4}^2\cdot
\|u\|_{L^2}\\
&\lesssim N^{-\frac{3}{2}}\|u\|_{L^2}^3.
\end{split}
\eeqs

Now we turn to (\ref{A4}).
Set
$$
\widetilde{\triangle}\equiv\dfrac{|\sigma_4|}{\prod\limits_{j=1}^4m(\xi_j)}
=\frac{|\tilde{M}_4(\xi_1,\cdots,\xi_4)|}
{|\alpha_4(\xi_1,\cdots,\xi_4)|\prod\limits_{j=1}^4m(\xi_j)},
$$
then (\ref{A4}) suffices if we show
$$
|\Lambda_4(\widetilde{\triangle})|
\lesssim
N^{-3}\>\|u(t)\|_{L^2}^4.
$$
Since $|\xi_j|\gtrsim N$ in $\Omega$ and $s\geq
-\dfrac{3}{4}$, by Lemma 4.3, we have
$$
\widetilde{\triangle}\lesssim
\dfrac{\chi_\Omega}{\prod\limits_{j=1}^4m(\xi_j)|\xi_j|}
\lesssim N^{-3}\prod\limits_{j=1}^4|\xi_j|^{-\frac{1}{4}}.
$$
Therefore, we have
$$
|\Lambda_4(\widetilde{\triangle})|\lesssim N^{-3}\left\|D^{-\frac{1}{4}}u(t)\right\|_{L^4_x}^4
\lesssim N^{-3}\|u(t)\|_{L^2}^4
$$
by Sobolev's inequality. \hfill$\Box$
\end{proof}

Next lemmas are the key estimates related to the almost conservation
of $E^4_I(t)$.

\begin{lem}
Let $I,s$ be as Lemma 4.5,  then
\beq
 \left|\displaystyle\int_0^\delta\Lambda_4(\bar{M}_4)\,dt\right|\lesssim
 N^{-3+}\>
 \|Iu\|_{X_{0,\frac{1}{2}+}^\delta}^4.
 \label{Lambda4}
\eeq
\end{lem}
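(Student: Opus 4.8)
The plan is to convert \eqref{Lambda4} into a quadrilinear space-time estimate and then run a case analysis driven by the pointwise bound \eqref{EM4} together with the Strichartz and bilinear estimates of Section~2. First I would pass to the extension $\tilde u$, for which $u=\tilde u$ on $[0,\delta]$ and $\|I\tilde u\|_{X_{0,\frac12+}}=\|Iu\|_{X_{0,\frac12+}^\delta}$; by Parseval, the pointwise bound \eqref{EM4}, and the reduction (exactly as in the proof of Proposition~3.2, where the time cutoff is treated trivially and all space-time Fourier transforms are taken nonnegative) it suffices to prove
\[
 \int_{*}\frac{|\bar M_4(\xi_1,\xi_2,\xi_3,\xi_4)|}{m(\xi_1)\,m(\xi_2)\,m(\xi_3)\,m(\xi_4)}
 \prod_{j=1}^4\big|\widehat{I\tilde u}(\xi_j,\tau_j)\big|
 \;\lesssim\; N^{-3+}\,\|I\tilde u\|_{X_{0,\frac12+}}^4 ,
\]
where $\int_{*}$ runs over $\xi_1+\cdots+\xi_4=0$, $\tau_1+\cdots+\tau_4=0$.

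Next I would fix the geometry. Ordering $|\xi_1^*|\ge|\xi_2^*|\ge|\xi_3^*|\ge|\xi_4^*|$, the constraint $\sum\xi_j=0$ gives $|\xi_1^*|\sim|\xi_2^*|$; since $\bar M_4$ is supported off $\Omega$ we have $|\xi_4^*|\lesssim N$; and since $M_4$ depends on $m$ only through the arguments $\xi_j$ and the pairwise sums $\xi_i+\xi_j$, all of size $\lesssim|\xi_1^*|$, the multiplier $\bar M_4$ is in fact supported in $\{|\xi_1^*|\sim|\xi_2^*|\gtrsim N\}$. I also record the resonance identity $\big|\sum_{j=1}^4(\tau_j-\phi(\xi_j))\big|=|\alpha_4|$ valid on $*$, whence $|\alpha_4|\lesssim\max_{1\le j\le4}\langle\tau_j-\phi(\xi_j)\rangle$, together with the trivial bound $|\alpha_4|\lesssim|\xi_1^*|^3$; and that on the $\gtrsim N$ frequencies $m(\xi_j)^{-1}\sim(|\xi_j|/N)^{|s|}$, so for $s\ge-\tfrac34$ the reciprocal multipliers, when combined with the explicit $|\xi_j|^{-1}$ factors coming from \eqref{EM4}, leave a net $N^{-3}$ and a surplus of negative powers of the frequencies.

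Then I would split the integration region. Where the largest modulation is profitably large, use \eqref{EM4} in the form $|\bar M_4|\lesssim|\alpha_4|/(|\xi_1||\xi_2||\xi_3||\xi_4|)$, peel off a fractional power $\langle\tau_k-\phi(\xi_k)\rangle^{\theta}$ of that modulation (interpolating the resonance bound with $|\alpha_4|\lesssim|\xi_1^*|^3$), and charge the rest to the $X_{0,\frac12+}$-norm of the $k$-th factor; then group the four factors into two pairs, estimating one pair by the bilinear operator bound \eqref{IE}--\eqref{IE1} (Lemma~2.5 and Corollary~2.6), using that $|\phi'(\xi_a)-\phi'(\xi_b)|\sim\max(|\xi_a|,|\xi_b|)^2$ when the two frequencies of that pair are separated, and the other pair by the Strichartz estimates \eqref{TSE1}, \eqref{TSE2}, \eqref{TSE3} and \eqref{XE3} (the $D_xL^\infty_xL^2_t$ and $D_x^{-1/4}L^4_xL^\infty_t$ bounds), the $P^a$-projections being harmless because $\bar M_4$ vanishes to the needed order near the frequency origin. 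The surplus frequency decay then makes all the dyadic sums converge and, together with the $N^{-3}$ already isolated, yields the claimed bound; the remaining region (where the largest modulation is small) is handled with the other half of \eqref{EM4}, $|\bar M_4|\lesssim|\xi_1^*|^{-1}\lesssim N^{-1}$, plus the high-frequency localization of the factors.

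The main obstacle is the configuration $|\xi_3^*|,|\xi_4^*|\ll N$ --- exactly the ``most hard case'' pointed out in Section~4.1. Here $|\phi'(\xi_1^*)-\phi'(\xi_2^*)|$ degenerates (it is only $\lesssim|\xi_1^*|\,|\xi_3^*+\xi_4^*|$ rather than $|\xi_1^*|^2$), so the two large frequencies must \emph{not} be paired together; one instead pairs each large frequency with a small one, where $|\phi'|$ is genuinely $\sim|\xi_1^*|^2$, and one must route the peeled-off modulation into a slot whose required time-regularity ($X_{0,\tilde b+}$ with $\tilde b$ close to $\tfrac16$ in Corollary~2.6) is low enough to absorb it. Keeping this bookkeeping manageable is precisely the point of the $\bar M_4/\tilde M_4$ splitting: it confines the delicate configuration to the $\Lambda_4$ term treated here rather than letting it pass into $\Lambda_5(M_5)$.
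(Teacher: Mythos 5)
Your blueprint contains the two ideas that actually drive the paper's proof --- the pointwise bound $|M_4|\lesssim 1/|\xi_{max}|$ from \eqref{EM4}, and the device of pairing the four factors so that each pair has well-separated $\phi'$-values and can be fed to the bilinear smoothing estimate \eqref{IE} --- but the paper's argument is a single uniform estimate with none of the modulation machinery you propose. After reducing to the quadrilinear integral and noting $|\xi_1|\sim|\xi_2|\gtrsim N$, $|\xi_4|\ll N$ (same reductions as yours), the paper proves the elementary frequency fact $\big||\xi_1|-|\xi_3|\big|\sim|\xi_1|$ (by contradiction, using that $\xi_4$ is small), which gives $|\phi'(\xi_1)-\phi'(\xi_3)|\sim|\xi_1|^2$ and $|\phi'(\xi_2)-\phi'(\xi_4)|\sim|\xi_2|^2$; it then inserts these weights, applies \eqref{IE} to the pairs $(f_1,f_3)$ and $(f_2,f_4)$, and checks that $N^{2s}|\xi_1|^{-3-2s}/m(\xi_3)\lesssim N^{-3}$. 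No splitting by the size of the largest modulation, no use of the $|\alpha_4|/\prod|\xi_j|$ half of \eqref{EM4}, and no Strichartz estimates are needed: the $|\xi_1|^{-2}$ gained from the two $I^{1/2}$ operators, combined with $|M_4|\lesssim|\xi_1|^{-1}$, already produces $N^{-3}$ outright.

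Two concrete points in your sketch would need repair if you carried it out. First, on the support of $\bar M_4$ the smallest frequency $\xi_4^*$ can be arbitrarily close to $0$, and there the bound $|\bar M_4|\lesssim|\alpha_4|/(|\xi_1||\xi_2||\xi_3||\xi_4|)\sim|\xi_3+\xi_4|/(|\xi_3||\xi_4|)$ has a non-square-integrable singularity in $\xi_4$; the ``min'' in \eqref{EM4} is there precisely so that one takes the $1/|\xi_{max}|$ branch in that region, and your large-modulation case, as described, reaches for the wrong branch. Second, your prescription ``pair each large frequency with a small one'' is only available when exactly two frequencies exceed $N$; in the configuration $|\xi_1|\sim|\xi_2|\sim|\xi_3|\gtrsim N\gg|\xi_4|$ some pair must consist of two large frequencies, and the non-degeneracy $|\phi'(\xi_i)-\phi'(\xi_j)|\sim|\xi_i|^2$ for that pair is exactly the statement $\big||\xi_i|-|\xi_j|\big|\sim|\xi_i|$, which is not automatic and is the one genuinely nontrivial observation in the paper's proof. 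You assert the needed $\phi'$-separation without justifying it in this case. With those two points fixed your route would close, but it would remain a substantially longer detour than the paper's direct argument.
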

\begin{proof}
By symmetry we may assume again that
$|\xi_1|\sim|\xi_2|\geq |\xi_3|$.
Since $\bar{M}_4=0$, when $|\xi_1|,\cdots, |\xi_4|\leq N$,
we may assume again that $|\xi_1|\sim |\xi_2|\gtrsim N$.
We always have $|\xi_4|\ll N$ in $\Gamma_4/\Omega$. To extend
the integration domain from $[0,\delta]$ to $\R$, we may need to borrow
$|\xi_1|^{0-}$ from the multiplier (see \cite{CKSTT5}, for the argument),
but this will not be mentioned since
it will only be recorded by $N^{0+}$ at the end.
Therefore, by Plancherel's identity, we only need to show
\begin{equation}\label{4.20}
\displaystyle\int_\ast
\frac{\bar{M}_4(\xi_1,\cdots, \xi_4)
\widehat{f_1}(\xi_1,\tau_1)\cdots \widehat{f_4}(\xi_4,\tau_4)}
{m(\xi_1)\cdots  m(\xi_4)}
\lesssim
N^{-3+}\>\|f_1\|_{X_{0,\frac{1}{2}+}}\cdots \|f_4\|_{X_{0,\frac{1}{2}+}},
\end{equation}
where $\displaystyle\int_\ast
=\int_{\stackrel{\xi_1+\cdots+\xi_4=0,}{ \tau_1+\cdots+\tau_{4}=0}}
\,d\xi_1 d\xi_2 d\xi_{3} d\tau_1 d\tau_2  d\tau_3$.

First, we note that $|\xi_1|-|\xi_3|\sim |\xi_1|.$
Otherwise, if $|\xi_3|=|\xi_1|+ o(|\xi_1|),$ then $|\xi_2|=|\xi_1|+ o(|\xi_1|),$
and $\xi_1\cdot \xi_2<0,\xi_2\cdot \xi_3>0$. Thus we have
$$
|\xi_1|=|\xi_2+\xi_3|+ o(|\xi_1|)=|\xi_2|+|\xi_3|+ o(|\xi_1|)=2|\xi_1|+ o(|\xi_1|),
$$
but it doesn't happen. Therefore, we have
$$
|\phi'(\xi_1)-\phi'(\xi_3)|\sim |\xi_1|^2,\quad
|\phi'(\xi_2)-\phi'(\xi_4)|\sim |\xi_2|^2.
$$
Thus, by Lemma 4.3 and using (\ref{IE}) two times,
the left-hand side of (\ref{4.20}) is controlled by
\begin{eqnarray*}
&&
 N^{2s}
 \displaystyle\int_\ast
 \frac{|\xi_1|^{-3-2s}}{m(\xi_3)}\cdot
 |\phi'(\xi_1)-\phi'(\xi_3)|^\frac{1}{2}|\phi'(\xi_2)-\phi'(\xi_4)|^\frac{1}{2}
 \widehat{f_1}(\xi_1,\tau_1)\cdots \widehat{f_4}(\xi_4,\tau_4)
 \nonumber\\
&\lesssim &
 N^{-3}
 \displaystyle\int_\ast
 |\phi'(\xi_1)-\phi'(\xi_3)|^\frac{1}{2}|\phi'(\xi_2)-\phi'(\xi_4)|^\frac{1}{2}
 f_1(\xi_1,\tau_1)\cdots f_4(\xi_4,\tau_4)
 \nonumber\\
&=&
 N^{-3}
 \displaystyle\int
 I^{\frac{1}{2}}(f_1,f_3)(x,t)\>
 I^{\frac{1}{2}}(f_2,f_4)(x,t)\,dx dt
 \nonumber\\
&\lesssim &
 N^{-3}
 \left\|I^{\frac{1}{2}}(f_1,f_3)\right\|_{L^2_{xt}}
 \left\|I^{\frac{1}{2}}(f_2,f_4)\right\|_{L^2_{xt}}^2
 \nonumber\\
&\lesssim &
 N^{-3}\>\|f_1\|_{X_{0,\frac{1}{2}+}}\cdots \|f_4\|_{X_{0,\frac{1}{2}+}}.
\end{eqnarray*}
This completes the proof of the lemma.   \hfill$\Box$
\end{proof}

\begin{lem}
Let $I,s$ be as Lemma 4.5, then
\begin{equation}
 \left|\displaystyle\int_0^\delta\Lambda_5(M_5)\,dt\right|\lesssim
 N^{-\frac{15}{4}+}\>
 \|Iu\|_{X_{0,\frac{1}{2}+}^\delta}^5.
 \label{Lambda4}
\end{equation}
\end{lem}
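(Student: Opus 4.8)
The plan is to mirror the proof of Lemma 4.7: reduce to a five‑linear Fourier integral, pull out one factor $|\phi'(\xi_a)-\phi'(\xi_b)|^{1/2}$ to feed the bilinear operator $I^{1/2}$ of Lemma 2.5, and absorb the remaining frequency weights into powers of $N$. Concretely, by duality and Plancherel's identity, writing $\widehat{f_j}=m\,\widehat{\tilde u_j}$ with $\tilde u_j$ the normalised extensions of $u|_{[-\delta,\delta]}$ and extending the $t$‑integral from $[0,\delta]$ to $\R$ at the cost of a harmless $N^{0+}$ (exactly as in Lemma 4.7), the estimate reduces to
\[
 \int_{\ast}\frac{|M_5(\xi_1,\dots,\xi_5)|}{m(\xi_1)\cdots m(\xi_5)}\,
 \widehat{f_1}(\xi_1,\tau_1)\cdots\widehat{f_5}(\xi_5,\tau_5)
 \ \lesssim\ N^{-\frac{15}{4}+}\prod_{j=1}^5\|f_j\|_{X_{0,\frac12+}},
\]
over $\xi_1+\cdots+\xi_5=0$, $\tau_1+\cdots+\tau_5=0$. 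By the symmetry of $\Lambda_5$ I would order $|\xi_1|\geq\cdots\geq|\xi_5|$, take $\widehat{f_j}\geq0$, and invoke Lemma 4.4, so that on the relevant part of the support $|\xi_1|,|\xi_2|,|\xi_3|\gtrsim N$, $|\xi_4+\xi_5|\gtrsim N$ (whence $|\xi_4|\gtrsim N$ too) and $|M_5|\lesssim(|\xi_1||\xi_2||\xi_3|)^{-1}$.

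The heart of the argument is to produce a frequency gap. Since $\phi'(\eta)=-2\nu|\eta|+3\mu\eta^2$ one has $|\phi'(\eta_1)-\phi'(\eta_2)|=\big||\eta_1|-|\eta_2|\big|\,\big|3\mu(|\eta_1|+|\eta_2|)-2\nu\big|\sim\big||\eta_1|-|\eta_2|\big|\max(|\eta_1|,|\eta_2|)$ once $\max(|\eta_1|,|\eta_2|)\gtrsim1$. I would then show there is a pair $\{a,b\}$ with $|\phi'(\xi_a)-\phi'(\xi_b)|\gtrsim|\xi_1|^2$: if $|\xi_5|\leq|\xi_1|/2$ take $\{a,b\}=\{1,5\}$; otherwise all five $|\xi_j|$ lie in $(|\xi_1|/2,|\xi_1|]$, and, since $\sum\xi_j=0$, the five signs cannot balance with all magnitudes comparable, so the average magnitudes of the positive and of the negative frequencies differ by $\gtrsim|\xi_1|$, which yields such a pair. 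Extracting $|\phi'(\xi_a)-\phi'(\xi_b)|^{1/2}\gtrsim|\xi_1|$ and using $m(\xi_j)^{-1}\sim(|\xi_j|/N)^{|s|}$ for $|\xi_j|\gtrsim N$, $m(\xi_5)^{-1}\leq\max\{1,(|\xi_5|/N)^{|s|}\}$, and $|s|\leq\frac34$, one absorbs the surplus positive powers of $|\xi_2|,\dots,|\xi_5|$ into $|\xi_1|^{-1}$ (all of $|\xi_1|,\dots,|\xi_4|$ being $\gtrsim N$), so that
\[
 \frac{|M_5|}{m(\xi_1)\cdots m(\xi_5)}
 \ \lesssim\ \frac{|\phi'(\xi_a)-\phi'(\xi_b)|^{1/2}}{|\xi_1|^2|\xi_2||\xi_3|\,m(\xi_1)\cdots m(\xi_5)}
 \ \lesssim\ N^{-\frac{15}{4}}\,|\phi'(\xi_a)-\phi'(\xi_b)|^{1/2}
\]
(in fact the bound $N^{-4}|\phi'(\xi_a)-\phi'(\xi_b)|^{1/2}$ holds, leaving a comfortable margin).

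Finally, with $\{c,d,e\}=\{1,\dots,5\}\setminus\{a,b\}$ and $(\xi_a+\xi_b)+\xi_c+\xi_d+\xi_e=0$, Plancherel turns the reduced integral into
\[
 N^{-\frac{15}{4}+}\!\int I^{\frac12}(f_a,f_b)\,f_c\,f_d\,f_e\,dx\,dt
 \ \lesssim\ N^{-\frac{15}{4}+}\big\|I^{\frac12}(f_a,f_b)\big\|_{L^2_{xt}}\|f_c\|_{L^6_{xt}}\|f_d\|_{L^6_{xt}}\|f_e\|_{L^6_{xt}}
\]
by H\"older ($\frac12+\frac16+\frac16+\frac16=1$); Lemma 2.5 bounds the first factor and Lemma 2.4 with $q=6$ (note $\frac49<\frac12$) the remaining three, giving $\lesssim N^{-\frac{15}{4}+}\prod_j\|f_j\|_{X_{0,\frac12+}}=N^{-\frac{15}{4}+}\|Iu\|_{X_{0,\frac12+}^\delta}^5$, as claimed.

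The step I expect to be the main obstacle is the middle one: verifying that a usable gap $|\phi'(\xi_a)-\phi'(\xi_b)|\gtrsim|\xi_1|^2$ is available in \emph{every} frequency configuration compatible with the support of $M_5$ (the sign‑counting argument), and then the routine but fiddly check that, after pulling this weight out, the residual multiplier is genuinely $\leq N^{-15/4}$ uniformly in $-\frac34\leq s<0$; the $N^{-4}$ headroom then comfortably absorbs the $0+$ losses from the $X_{0,\frac12+}$ norms and from the time extension.
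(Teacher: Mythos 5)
Your overall architecture (reduce to a five--linear weighted convolution estimate, invoke Lemma 4.4, then H\"older with one bilinear factor $I^{1/2}$ and Strichartz norms on the rest) matches the paper's in spirit, and your treatment essentially reproduces the paper's easier case where the smallest frequency is $\ll N$. But there is a genuine gap at the very first step: you reorder the variables so that $|\xi_1|\geq\cdots\geq|\xi_5|$ and then read off from Lemma 4.4 that $|M_5|\lesssim(|\xi_1||\xi_2||\xi_3|)^{-1}$ with the three \emph{largest} frequencies in the denominator, and that $|\xi_1|,\dots,|\xi_4|\gtrsim N$. The indices in Lemma 4.4 are structural, not size--ordered: $M_5$ is the symmetrization of $\sigma_4(\xi_1,\xi_2,\xi_3,\xi_4+\xi_5)(\xi_4+\xi_5)$, so for each representative term the denominator consists of the three \emph{un-glued} variables and the support condition constrains those three together with $|\xi_4+\xi_5|$, with no control on the sizes of $\xi_4,\xi_5$ individually beyond $|\xi_4+\xi_5|\gtrsim N$. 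After your size-reordering the glued pair may well be the two largest frequencies, in which case your claimed bound and support are simply not what Lemma 4.4 provides.

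This is not a cosmetic issue. Take $\xi_4=M$, $\xi_5=-M+cN$ with $M\gg N$ and $c\gtrsim 1$, and $|\xi_1|,|\xi_2|,|\xi_3|\sim N$: this lies in the support, the correct pointwise bound is
$$
\frac{|M_5|}{m(\xi_1)\cdots m(\xi_5)}\ \lesssim\ \frac{(M/N)^{-2s}}{N^{3}}\,N^{-3s}\ \sim\ M^{\frac32}N^{-\frac92}\quad(s=-\tfrac34),
$$
and extracting a single factor $|\phi'(\xi_a)-\phi'(\xi_b)|^{1/2}\lesssim M$ (the best any one pair can give; note $|\phi'(\xi_4)-\phi'(\xi_5)|^{1/2}\sim(MN)^{1/2}$ is even smaller) leaves $M^{1/2}N^{-9/2}$, which is not $O(N^{-15/4})$ uniformly in $M$. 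One bilinear smoothing factor cannot absorb the weight $|\xi_4|^{-s}|\xi_5|^{-s}$ when \emph{both} glued frequencies are $\gg N$; you need a gain of (almost) a full derivative on each of $f_4$ and $f_5$ separately. This is exactly what the paper's Case 1 does: it distributes $D_x^{-s}$ onto $f_4$ and $f_5$ individually and closes the estimate with the mixed-norm smoothing and maximal-function estimates of Lemma 2.3, pairing $\|D_x^{-1/4}P^af_j\|_{L^4_xL^\infty_t}$ for the three un-glued factors against $\|D_x^{3/4}P^af_j\|_{L^8_xL^2_t}$ for the two glued ones. Your $L^6_{xt}\times L^6_{xt}\times L^6_{xt}\times I^{1/2}$ scheme, which carries no derivative gain on the $L^6$ factors, cannot reach this configuration; you would need to add the paper's Case 1 (or an equivalent smoothing input) to complete the proof.
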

\begin{proof}
By the argument at the beginning of the proof of Lemma 4.6, we may
use Plancherel's identity and turn to show
\begin{equation}\label{4.22}
\displaystyle\int_{\ast}
\frac{M_5(\xi_1,\cdots, \xi_5)
\widehat{f_1}(\xi_1,\tau_1)\cdots \widehat{f_5}(\xi_5,\tau_5)}
{m(\xi_1)\cdots  m(\xi_5)}
\lesssim
N^{-\frac{15}{4}+}\>\|f_1\|_{X_{0,\frac{1}{2}+}}\cdots \|f_5\|_{X_{0,\frac{1}{2}+}},
\end{equation}
where $\displaystyle\int_\ast
=\int_{\stackrel{\xi_1+\cdots+\xi_5=0,}{ \tau_1+\cdots+\tau_5=0}}
\,d\xi_1\cdots d\xi_{4} d\tau_1\cdots  d\tau_4$.
By the definition of $\tilde{M}_4$,
we have: $|\xi_1|,|\xi_2|,|\xi_3|,$ $|\xi_4+\xi_5|\gtrsim N$. We may
assume that $|\xi_4|\geq |\xi_5|$ by symmetry. Now we split it into
two cases to analysis:
Case 1, $|\xi_4|, |\xi_5|\gtrsim N$;
Case 2, $|\xi_4|\gtrsim N\gg |\xi_5|$.

Case 1, $|\xi_4|, |\xi_5|\gtrsim N$.
By Lemma 4.4,
$$
\frac{M_5(\xi_1,\cdots, \xi_5)}
{ m(\xi_1)\cdots  m(\xi_5)}
\lesssim
N^{5s}|\xi_1|^{-1-s}|\xi_2|^{-1-s}|\xi_3|^{-1-s}|\xi_4|^{-s}|\xi_5|^{-s}
$$
Then, by Lemma 2.3 and note that $s\geq \dfrac{3}{4}$, the left-hand side of (\ref{4.22}) is bounded by
\begin{eqnarray*}
&&
 N^{5s}
 \displaystyle\int_\ast
 |\xi_1|^{-1-s}|\xi_2|^{-1-s}|\xi_3|^{-1-s}|\xi_4|^{-s}|\xi_5|^{-s}
 \widehat{f_1}(\xi_1,\tau_1)\cdots \widehat{f_5}(\xi_5,\tau_5)
 \\
&\lesssim &
 N^{-\frac{15}{4}}
 \left\|D_x^{-\frac{1}{4}}P^af_1\right\|_{L^4_x L^\infty_t}
 \cdots
 \left\|D_x^{-\frac{1}{4}}P^af_3\right\|_{L^4_x L^\infty_t}
 \left\|D_x^{\frac{3}{4}}P^af_4\right\|_{L^8_x L^2_t}
 \left\|D_x^{\frac{3}{4}}P^af_5\right\|_{L^8_x L^2_t}
 \\
&\lesssim &
 N^{-\frac{15}{4}}\>\|f_1\|_{X_{0,\frac{1}{2}+}}
 \cdots \|f_5\|_{X_{0,\frac{1}{2}+}}.
\end{eqnarray*}

Case 2, $|\xi_4|\gtrsim N\gg |\xi_5|$. In this case,
$$
\frac{M_5(\xi_1,\cdots, \xi_5)}
{ m(\xi_1)\cdots  m(\xi_5)}
\lesssim
N^{4s}|\xi_1|^{-1-s}|\xi_2|^{-1-s}|\xi_3|^{-1-s}|\xi_4|^{-s}.
$$
Then, by Lemma 2.4 and Lemma 2.5,
the left-hand side of (\ref{4.22}) is bounded by
\begin{eqnarray*}
&&
 N^{4s}
 \displaystyle\int_\ast
 |\xi_1|^{-1-s}|\xi_2|^{-1-s}|\xi_3|^{-1-s}|\xi_4|^{-s}
 \widehat{f_1}(\xi_1,\tau_1)\cdots \widehat{f_5}(\xi_5,\tau_5)
 \\
&\lesssim &
 N^{-4}
 \left\|f_1\right\|_{L^6_x L^6_t}
 \cdots
 \left\|f_3\right\|_{L^6_x L^6_t}
 \left\|I^{\frac{1}{2}}(f_4,f_5)\right\|_{L^2_x L^2_t}
 \\
&\lesssim &
 N^{-4}\>\|f_1\|_{X_{0,\frac{1}{2}+}}
 \cdots \|f_5\|_{X_{0,\frac{1}{2}+}}.
\end{eqnarray*}
This completes the proof of the lemma.   \hfill$\Box$
\end{proof}

Now we are ready to prove Theorem 1.1 by iteration.

Fix $N$ large and  depending on $\|u_0\|_{H^s}$. First of all, by
Proposition 3.1, (\ref{kbo})-(\ref{1.2}) is well-posed on $[0,\delta]$
in $H^s(\R)$ with
$$
\delta\sim\|I_{N,s}u_0\|^{-2-}_{L^2}\gtrsim N^{2s-}.
$$
Next, we turn to estimate $E^2_I(\delta)\equiv \|I_{N,s}u(\delta)\|_{L^2}^2$.
By (\ref{E3'}), Lemma 4.6 and Lemma 4.7, we have
\beq
 E^4_I(t)
 \lesssim
 E^4_I(0)+N^{-3+}\>\|Iu\|_{X_{0,\frac{1}{2}+}^\delta}^4
 +N^{-\frac{15}{4}+}\>\|Iu\|_{X_{0,\frac{1}{2}+}^\delta}^5, \quad t\in [0,\delta].
 \label{49}
\eeq
By Lemma 4.5, we have
\beq
 E^2_I(t)
 \lesssim
 E^4_I(t)+N^{-\frac{3}{2}}\|Iu(t)\|_{L^2}^3+N^{-3}\|Iu(t)\|_{L^2}^4,
 \label{410}
\eeq
and for $t=0$,
\beq
 E^4_I(0)
 \lesssim
 \|Iu_0\|_{L^2}^2+N^{-\frac{3}{2}}\|Iu_0\|_{L^2}^3+N^{-3}\|Iu_0\|_{L^2}^4.
 \label{411}
\eeq
Therefore, using (\ref{49})$\sim$(\ref{411}), (\ref{LSE}) and (\ref{II}), we have
\begin{eqnarray*}
 E^2_I(t)
& \lesssim&
   \|Iu_0\|_{L^2}^2+N^{-\frac{3}{2}}\|Iu_0\|_{L^2}^3+N^{-3}\|Iu_0\|_{L^2}^4
   +N^{-3+}\>\|Iu\|_{X_{0,\frac{1}{2}+}^\delta}^4
   \\
&&
   +N^{-\frac{15}{4}+}\>\|Iu\|_{X_{0,\frac{1}{2}+}^\delta}^5
   +N^{-\frac{3}{2}}\|Iu(t)\|_{L^2}^3+N^{-3}\|Iu(t)\|_{L^2}^4
   \\
& \lesssim&
   \|Iu_0\|_{L^2}^2
   +N^{-\frac{3}{2}}\|Iu_0\|_{L^2}^3+N^{-3}\|Iu_0\|_{L^2}^4
   +N^{-3+}\>\|Iu_0\|_{L^2}^4
   \\
&&
   +N^{-\frac{15}{4}+}\>\|Iu_0\|_{L^2}^5
   +N^{-\frac{3}{2}}\|Iu(t)\|_{L^2}^3+N^{-3}\|Iu(t)\|_{L^2}^4
   \\
& \leq&
   \dfrac{1}{4}C_0N^{-2s}
   +C_1(N^{-3+}N^{-4s}+N^{-\frac{15}{4}+}N^{-5s})
   \\
&&
   +C_2\left(N^{-\frac{3}{2}}\|Iu(t)\|_{L^2}^3+N^{-3}\|Iu(t)\|_{L^2}^4\right)
\end{eqnarray*}
for $t\in [0,\delta]$, where $C_0$ is the constant such that
$\|Iu_0\|_{L^2}^2\leq \dfrac{1}{8}C_0 N^{-2s}$. Therefore,
\begin{equation*}
     E^2_I(t)
 \leq
    \dfrac{1}{2}C_0N^{-2s}
    +C_2\left(N^{-\frac{3}{2}}\|Iu(t)\|_{L^2}^3+N^{-3}\|Iu(t)\|_{L^2}^4\right)
\end{equation*}
provided
\beq
      C_1(N^{-3+}N^{-4s}+N^{-\frac{15}{4}+}N^{-5s})
 \leq
      \dfrac{1}{4}C_0N^{-2s}.
      \label{condition1}
\eeq
So, for large $N$, it's easy to see that
$$
     E^2_I(t)\leq  C_0N^{-2s},\qquad t\in [0,\delta].
$$
In particular, $E^2_I(\delta)\leq C_0N^{-2s}$. Therefore,
by taking $u(\delta)$ as a new initial data and employing Proposition 3.1,
we can extend the
solution to $[0,2\delta]$ under the condition (\ref{condition1}).

Repeating this process $k$ times, then
$E^2_I(k\delta)\leq C_0N^{-2s}$ provided
\beq
 kC_1(N^{-3+}N^{-4s}+N^{-\frac{15}{4}+}N^{-5s})
 \leq
 \dfrac{1}{4}C_0N^{-2s}.
 \label{condi}
\eeq
Set $T=k\delta$, then (\ref{condi}) becomes
\beq
T\cdot
C_1
\delta^{-1}
(N^{-3+}N^{-4s}+N^{-\frac{15}{4}+}N^{-5s})
\leq
\dfrac{1}{4}C_0N^{-2s}.
\label{condition2}
\eeq
Therefore, for a given $T>0$,
the solution can be extended to $[0,T]$ if (\ref{condition2}) holds.
Choosing $N^{0+}\gtrsim T$, (\ref{condition2}) becomes
$$
\delta^{-1}(N^{-3+}N^{-4s}+N^{-\frac{15}{4}+}N^{-5s})\lesssim
N^{-2s}.
$$
Since $\delta^{-1}\lesssim N^{-2s+}$, the above inequality amounts
$$
-2s-3-4s< -2s, \quad -2s-\frac{15}{4}-5s< -2s,
$$
that is, $s>-\dfrac{3}{4}$.
This completes the proof of Theorem 1.1.

\end{document}